\theoremstyle{plain}
\newtheorem{theorem}{Theorem}[section]
\newtheorem{question}[theorem]{Question}
\newtheorem{lemma}[theorem]{Lemma}
\newtheorem{corollary}[theorem]{Corollary}
\newtheorem{proposition}[theorem]{Proposition}
\newtheorem{definition}[theorem]{Definition}
\newtheorem{conjecture}[theorem]{Conjecture}
\theoremstyle{remark}
\newtheorem{remark}[theorem]{Remark}
\newtheorem*{theorem*}{Theorem}
\def\dim{{\rm dim}}
\def\Z{\mathbb Z}
\def\Q{\mathbb Q}
\def\C{\mathbb{C}}
\def\N{\mathbb{N}}
\def\SU{{\rm SU}}
\def\PU{{\rm PU}}
\def\U{{\rm U}}
\def\F{{\mathbf F}}
\title[Equations over groups]{New topological methods to solve equations over groups}
\author{Anton Klyachko}
\address{A.K., Moscow State University, Russia.}
\email{klyachko@mech.math.msu.su}
\author{Andreas Thom}
\address{A.T., Institut f\"ur Geometrie, TU Dresden, Germany}
\email{andreas.thom@tu-dresden.de}
\begin{document}


\begin{abstract}
We show that the equation associated with a group word $w \in G \ast \F_2$ can be solved over a hyperlinear group $G$ if its content -- that is its augmentation in $\F_2$ -- does not lie in the second term of the lower central series of $\F_2$. Moreover, if $G$ is finite, then a solution can be found in a finite extension of $G$. The method of proof extends techniques developed by Gerstenhaber and Rothaus, and uses computations in $p$-local homotopy theory and cohomology of compact Lie groups.
\end{abstract}

\maketitle

\tableofcontents

\section*{Introduction}

This paper is about the solvability of equations in groups. Let us start by briefly recalling the analogous situation of polynomial equations with rational coefficients. Even though not every non-constant polynomial $p(t) \in \Q[t]$ has a root in $\Q$, there always exists a finite field extension $\Q \subset K$, such that $p(t)=0$ can be solved in $K$, i.e., there exists $\alpha \in K$ with $p(\alpha)=0$. Indeed, it is straightforward to construct some splitting field $K$ with the desired property using the machinery of commutative algebra. On the other side, it is also well-known that arguments from algebraic topology (using notions of degree, winding number or the fundamental group) can be used to show that every polynomial has a root in the topological field $\C$ and historically, this was the first way to provide a field extension of $\Q$ in which $p(t)=0$ can be solved -- an argument which essentially goes back to Gauss' work from 1799.
In the analogous situation, when one wants to solve equations with coefficients in a group, the algebraic or combinatorial approach fails to a large extend and the  homotopy theoretic approach has been used by Gerstenhaber-Rothaus \cite{MR0166296} to obtain positive results -- see the next section for definitions and more precise statements. As a particular consequence, Gerstenhaber-Rothaus were able to prove the Kervaire-Laudenbach conjecture for locally residually finite groups. First of all, we want to clarify a relationship between Connes' Embedding Problem and the Kervaire-Laudenbach Conjecture that was observed in \cite{MR2460675}. 
Moreover, we want to extend this study to cover a larger class of groups and also a larger class of equations which can be handled by methods from algebraic topology. Our methods involve a detailed study of the $p$-local homotopy type of the simple Lie group ${\rm PU}(p)$ and the effect of word maps on the cohomology ring with mod $p$ coefficients.

Our first main result applies to any prime number $p$ and any group word $w \in \SU(p) \ast {\mathbf F}_2$. If the augmentation of $w$ in ${\mathbf F}_2$ does not lie in the second step of the exponent-$p$ central series $[{\mathbf F}_2,{\mathbf F}_2]^p [{\mathbf F}_2,[{\mathbf F}_2,{\mathbf F}_2]],$ then the equation $w(a,b)=1$ can be solved in $\SU(p)$. This implies our second main result, which says that a two-variable equation with augmentation not in $[{\mathbf F}_2,[{\mathbf F}_2,{\mathbf F}_2]]$ can be solved over any hyperlinear group, see Section \ref{prel} for details. Moreover, if the group $G$ is finite, then a solution can be found in a finite extension of $G$. This covers classes of singular equations, which were intractable by combinatorial methods or the topological methods developed by Gerstenhaber-Rothaus.
Our main results are stated explicitly as Theorems \ref{mainthm} and \ref{sup} in Section \ref{sec:stat}.

\vspace{0.2cm}

The paper is organized as follows. Section \ref{prel} collects various preliminaries and discusses briefly the setup of group words and equations and the classes of hyperlinear and sofic groups. Section \ref{topsu} recalls some facts about the cohomology of $\SU(n)$ and $\PU(n)$, localization theory of topological spaces, and computations of homotopy groups of spheres. This section is the most technical part and also contains a review and extension of results of Kishimoto and Kono \cite{MR2544124}. Section \ref{solv} contains the proofs of our main results and discusses related low-dimensional results and further directions.

\section{The main results} \label{prel}

\subsection{Group words and equations}
We denote by ${\mathbf F}_n$ the free group on generators $x_1,\dots,x_n$. For any group $G$, an element $w$ in the free product $G \ast {\mathbf F}_n$  determines a word map $w \colon  G^{\times n} \to G$ given by evaluation. We denote by $\varepsilon \colon  G \ast {\mathbf F}_n \to {\mathbf F}_n$ the natural augmentation which sends $G$ to the neutral element and call $\varepsilon(w)$ the {\it content} of $w$. We call $w \in G \ast {\mathbf F}_n$ a group word in $n$ variables with coefficients in $G$. Every group word $w \in G \ast {\mathbf F}_n$ determines an equation $w(x_1,\dots,x_n)=1$ in $n$ variables with coefficients in $G$ in an obvious way. We say that $w \in G \ast {\mathbf F}_n$ can be solved {\it over} $G$ if there exists an overgroup $H \supseteq G $ and $g_1,\dots,g_n \in H$ such that $w(g_1,\dots,g_n)=1$, where $1$ denotes the neutral element in $H$. Similarly, we say that it can be solved {\it in} $G$ if we can take $H=G$. We denote the normal subgroup in $G \ast \F_n$, which is generated by the element $w$ by $\langle\!\langle w \rangle\!\rangle$. It is clear that an equation $w \in G \ast {\mathbf F}_n$ can be solved over $G$ if and only if the natural homomorphism $G \to G \ast {\mathbf F}_n/\langle\!\langle w \rangle\! \rangle$ is injective. Similarly, an equation can be solved in $G$ if and only if the natural homomorphism $G \to G \ast {\mathbf F}_n/\langle\!\langle w \rangle\! \rangle$ is split-injective, i.e., it has a left inverse. 

The study of equations over groups dates back to the work of Bernhard Neumann \cite{MR0008808}. There is an extensive literature about equations over groups, including \cites{MR2975932, MR2785779, MR2642015, MR1002920, MR2100369, MR919828, MR0166296, MR614523, MR695646, MR1772011, MR1218513, MR1696317, MR2179667, MR0142643, MR0008808}. See also Roman'kov's recent survey about this topic \cite{MR3043434}.

\vspace{0.2cm}

It is well-known that not all equations with coefficients in $G$ are solvable over $G$. For example if $G=\langle a,b \mid a^2,b^3 \rangle$, then the equation $w(x) = xax^{-1}b$ with variable $x$ is not solvable over $G$. Indeed, $a$ and $b$ cannot become conjugate in any overgroup of $G$. Another example is $G = \Z/p\Z = \langle a \rangle$ with the equation $w(x) = xax^{-1} a xa^{-1}x^{-1}a^{-2}$. 
However, in both cases we have $\varepsilon(w)=1 \in {\mathbf F}_n$. Indeed, the only known examples of equations which are not solvable over some $G$ are equations whose content is trivial. We call an equation $w \in G \ast {\mathbf F}_n$ singular if its content is trivial, and non-singular otherwise. This lets us put forward the following conjecture:

\begin{conjecture} \label{conj}
Let $G$ be a group and $w \in G \ast {\mathbf F}_n$ be an equation in $n$ variables with coefficients in $G$. If $w$ is non-singular, then it is solvable over $G$. In addition, if $G$ is finite, then a solution can be found in a finite extension.
\end{conjecture}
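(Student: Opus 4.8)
Conjecture~\ref{conj} is open in general, so what follows is a line of attack --- via algebraic topology, in the tradition of Gerstenhaber--Rothaus \cite{MR0166296} --- with an honest indication of how far it reaches and where it stalls.

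\emph{Reduction to Lie groups.} If $G$ embeds into a group $H$ in which $w$, read as an element of $H\ast\mathbf F_n$, becomes solvable, then $w$ is already solvable over $G$, so it suffices to produce a convenient overgroup. A finite group has a faithful representation $G\hookrightarrow\U(m)$, while a residually finite --- or merely hyperlinear --- group embeds into a metric ultraproduct of unitary groups; the relevance of Connes' Embedding Problem here, recorded in \cite{MR2460675}, is precisely that ``every group is hyperlinear'' would make this step available unconditionally. A solution of $w$ inside such an ultraproduct amounts to a family of \emph{approximate} solutions in finite-dimensional unitary groups, and because a hyperlinear group already embeds into a metric ultraproduct of the groups $\PU(p)$ with $p$ prime, Conjecture~\ref{conj} for hyperlinear $G$, $n=2$, and contents avoiding $[\mathbf F_2,[\mathbf F_2,\mathbf F_2]]$ would follow from a single assertion: for every prime $p$ and every $w\in\PU(p)\ast\mathbf F_2$ with $\varepsilon(w)\notin[\mathbf F_2,\mathbf F_2]^p[\mathbf F_2,[\mathbf F_2,\mathbf F_2]]$, the word map $\bar w\colon\PU(p)^{\times 2}\to\PU(p)$ is surjective. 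The abelian sub-case $\varepsilon(w)\notin[\mathbf F_2,\mathbf F_2]$ is essentially the Gerstenhaber--Rothaus case, already visible through $\det\colon\U(d)\to\U(1)$; the genuinely new content is the commutator sub-case.

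\emph{A cohomological surjectivity criterion.} To prove such a surjectivity statement I would imitate the topological proof of the Fundamental Theorem of Algebra: a continuous map $f\colon X\to K$ into a closed orientable $d$-manifold $K$ whose induced map on $H^{d}(-;\mathbf F_p)$ is nonzero must be onto, since otherwise it factors through $K$ with a point removed, whose $d$-th cohomology vanishes. The plan is therefore to work $p$-locally with $\PU(p)$ --- whose homotopy type at $p$ admits a tractable description by Kishimoto--Kono \cite{MR2544124} --- to identify the algebra generators of $H^*(\PU(p);\mathbf F_p)$, and to compute, drawing on the homotopy groups of spheres, the effect of a general two-variable word map on those generators. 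The crux is to show that the top-degree component of $\bar w^*$ is a nonzero function of the class of $\varepsilon(w)$ in the finite nilpotent quotient $\mathbf F_2/[\mathbf F_2,\mathbf F_2]^p[\mathbf F_2,[\mathbf F_2,\mathbf F_2]]$ --- the $p$-th powers of commutators entering because of the central $\Z/p$ one factors out to form $\PU(p)$. Granting this, $\bar w$ is surjective whenever that class is nontrivial; and since for any fixed two-variable word whose content avoids $[\mathbf F_2,[\mathbf F_2,\mathbf F_2]]$ this happens for all but finitely many primes $p$, the reduction above solves the equation over every hyperlinear group. The finiteness addendum calls for an extra argument placing the solution inside a finite group --- presumably by carrying the same cohomological input over to finite groups of Lie type of large enough rank --- which I expect to be feasible but treat here as a separate matter.

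I expect the decisive obstacle to be the cohomological computation on $\PU(p)$ itself: in contrast with its $\U(1)$-shadow, the classes in $H^*(\PU(p);\mathbf F_p)$ that detect a word map depend \emph{nonlinearly} on $w$, so pinning down the leading term demands real control of the $p$-local homotopy type of $\PU(p)$ and of word maps into spheres --- precisely the technical core that the abstract promises. Beyond this there is a structural ceiling: the method only ever sees the quotient $\mathbf F_2/[\mathbf F_2,[\mathbf F_2,\mathbf F_2]]$, so a non-singular two-variable word whose content lies in $[\mathbf F_2,[\mathbf F_2,\mathbf F_2]]$ --- say one with content $[x_1,[x_1,x_2]]$ --- slips through untouched, and words in three or more variables lie outside the method altogether. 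These residual families, together with the hyperlinearity hypothesis --- whose removal would require showing that every group is hyperlinear, a question intimately tied to Connes' Embedding Problem --- are exactly what would stand between this approach and a proof of Conjecture~\ref{conj} in full.
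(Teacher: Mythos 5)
This statement is a conjecture, not a theorem: the paper offers no proof of it, and your proposal correctly recognizes that it is open and instead sketches the partial results the paper actually establishes (Theorems \ref{mainthm} and \ref{sup}), following essentially the same route --- reduction of hyperlinear $G$ to the groups $\SU(p)$, replacement of $w$ by $\varepsilon(w)$ up to homotopy, and a mod-$p$ cohomological surjectivity criterion whose leading term is read off from the image of $\varepsilon(w)$ in $\F_2/[\F_2,\F_2]^p[\F_2,[\F_2,\F_2]]$. Two corrections to your sketch, both of which the paper's argument addresses: (i) surjectivity of $\bar w\colon \PU(p)^{\times 2}\to\PU(p)$ would only place $w(a,b)$ in the center of $\SU(p)$, so one must instead prove surjectivity of the lifted map $\PU(p)\times\PU(p)\to\SU(p)$, detected on the top class $x_2\cdots x_p\in H^{p^2-1}(\SU(p),\Z/p\Z)$ --- and controlling the image of the last generator $x_p$ is exactly where the Kishimoto--Kono $p$-local analysis and Bott's Samelson product computation enter; (ii) the finiteness addendum is obtained not via finite groups of Lie type but from Mal'cev's theorem that finitely generated linear groups are residually finite. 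Your closing assessment of the structural ceiling (contents in $[\F_2,[\F_2,\F_2]]$, more variables, and the hyperlinearity hypothesis) matches the paper's own discussion of what remains open.
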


The case $n=1$ is the famous Kervaire-Laudenbach Conjecture.
The one-variable case was studied in classical work by Gerstenhaber-Rothaus from 1962, see \cite{MR0166296}. They showed that if $G$ is finite, then every non-singular equation in one variable can be solved over $G$ (in fact in some finite extension of $G$). Their proof used computations in cohomology of the compact Lie groups $U(n)$. It is this proof, that inspired us to start this work. The work of Gerstenhaber-Rothaus showed in fact that every non-singular equation in one variable with coefficients in the unitary group ${\rm U}(n)$ can be solved already {\it in} ${\rm U}(n)$, for any $n \in \N$. Their strategy was to use homotopy theory to say that the associated word map $w \colon {\rm U}(n) \to {\rm U}(n)$ has a non-vanishing degree (as a map of oriented manifolds) and thus must be surjective. Any preimage of the neutral element provides a solution to the equation $w$. The key to the computation of the degree is to observe that the degree depends only on the homotopy class of $w$ and thus -- since ${\rm U}(n)$ is connected -- does not change if $w$ is replaced by $\varepsilon(w)$. The computation of the degree is now an easy consequence of classical computations of Hopf \cite{hopf}.

This property of solvability {\it in} a group is easily seen to pass to arbitrary Cartesian products of groups and arbitrary quotients of groups. As a consequence, non-singular equations in one variable with coefficients in $G$ as above can be solved over $G$ if $G$ is isomorphic to a subgroup of a quotient of the infinite product $\prod_n \! {\rm U}(n)$ -- an observation that is due to Pestov \cite{MR2460675}. Groups which admit such an embedding are called hyperlinear groups, see \cite{MR2460675} and Section \ref{hyperlinear} for more information on this class of groups -- see also Remark \ref{hyperl}. Thus, the result of Gerstenhaber-Rothaus also holds for hyperlinear groups -- in particular for all amenable groups, or more generally, all sofic groups \cite{MR2460675}. Connes' Embedding Conjecture predicts (among other things) that every countable group is hyperlinear and thus implies the Kervaire-Laudenbach Conjecture -- this was also observed by Pestov in \cite{MR2460675}.

\vspace{0.2cm} Actually, Gerstenhaber-Rothaus \cite{MR0166296} studied the more involved question whether $m$ equations of the form $w_1,\dots,w_m \in G \ast {\mathbf F}_n $ in $n$ variables can be solved simultaneously over $G$. Their main result is that this is the case if $G$ is finite (or more generally, locally residually finite) and the presentation two-complex $X := K\langle x_1,\dots,x_n \mid \varepsilon(w_1),\dots,\varepsilon(w_m) \rangle$ satisfies $H_2(X,\mathbb Z)=0$, i.e., the second homology of $X$ with integral coefficients vanishes. Here, the representation two-complex is the two-dimensional $CW$-complex associated with the presentation of a group -- obtained by glueing $m$ two-cells to a bouquet of $n$ circles according to the relations. (This amounts to a certain algebraic condition on the exponent sum matrix.) A system of equations which satisfies this vanishing condition was called non-singular by Jim Howie \cite{MR614523} -- note that our terminology is not consistent with this, but there will be no risk of confusion.
Later, Howie \cite{MR614523} proved the same result for locally indicable groups and conjectured it to hold for all groups -- we call that Howie's Conjecture. Again, Connes' Embedding Conjecture implies Howie's Conjecture -- and more specifically, every hyperlinear group satisfies Howie's Conjecture. 

\begin{remark}
Equations in one variable with at most three occurrences of the variable are solvable by a result of Howie \cite{MR695646}, which however also reduces this to the residually finite case and uses the results of Gerstenhaber-Rothaus. Similar results have been proved for non-singular equations with four \cite{MR1002920} and five \cite{MR2324624} occurrences of variables.
\end{remark}

\begin{remark}  Equations $w \in G \ast \F_n$ with $G$ torsion-free can be solved by more combinatorial methods. A systematic study of the torsion-free case was started by Levin \cite{MR0142643} who conjectured that equations in one variable with coefficients in a torsion-free group should always be solvable if $w$ is conjugate to an element in $G$. A result in this direction is due to the first author who proved that this is indeed the case for one-variable equations with content $\pm1 \in \Z$, see \cite{MR1218513}. Moreover, over any torsion-free group any several-variable equation whose content is not a proper power (and not the neutral element) is solvable \cite{MR2251364}.
Again, due to the absence of any counterexamples, it is conjectured that solvability in the torsion-free case is true even if the content of the equation is trivial.
\end{remark}

\begin{remark} Existence of non-trivial solutions can be a subtle issue too, see for example \cite{rothaus}.
For $G= \Z/p\Z = \langle a \rangle$, the equation $w(x) = axa^{-1}xax^{-1}a^{-1}x^{-2}$ can be solved in a finite overgroup of $G$ only with $x=1$, even though non-trivial solutions exist in infinite extensions. The mechanism behind these kind of examples was first discovered by Higman, see \cite{higman}.
\end{remark}

\subsection{Statement of the main results} \label{sec:stat}
The main goal of this work is to provide examples of non-singular equations in many variables which are solvable over every hyperlinear group, where the condition on the equation {\it only} depends on its content. This should be compared for example with results of Gersten \cite{MR919828}, where the conditions on $w$ depended on the unreduced word obtained by deleting the coefficients from $w$. For simplicity, we concentrate on the two-variable case. Our main result is:

\begin{theorem} \label{mainthm}
Let $G$ be a hyperlinear group. An equation in two variables with coefficients in $G$ can be solved over $G$ if its content does not lie in $[\F_2,[\F_2,\F_2]]$. Moreover, if $G$ is finite, then a solution can be found in a finite extension of $G$.
\end{theorem}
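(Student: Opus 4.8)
The plan is to reduce the general hyperlinear case to a statement about the unitary groups $\mathrm{U}(n)$ — or, after a $p$-local refinement, about $\mathrm{PU}(p)$ and $\mathrm{SU}(p)$ — and to solve the equation there by a homotopy-theoretic degree/cohomology argument. First I would recall, following Pestov's observation reproduced in the introduction, that solvability \emph{in} a group passes to Cartesian products and quotients, so it suffices to prove that every two-variable equation whose content does not lie in $[\F_2,[\F_2,\F_2]]$ is solvable \emph{in} $\mathrm{U}(n)$ for every $n$; the finite-extension assertion for finite $G$ then follows from the standard trick of embedding a finite $G$ into some $\mathrm{U}(n)$, solving inside $\mathrm{U}(n)$, and taking the subgroup generated by $G$ together with the solution (this is how Gerstenhaber--Rothaus get a \emph{finite} extension, using that the solution set is an algebraic subvariety and a suitable finite approximation).

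The second step is the heart of the matter: given $w \in \mathrm{U}(n) \ast \F_2$ with $\varepsilon(w) = c \notin [\F_2,[\F_2,\F_2]]$, show the word map $w \colon \mathrm{U}(n)^{\times 2} \to \mathrm{U}(n)$ hits the identity. Since $\mathrm{U}(n)$ is connected, $w$ is homotopic to the coefficient-free map $c \colon \mathrm{U}(n)^{\times 2} \to \mathrm{U}(n)$, so whether $1$ is in the image is (almost) a homotopy-invariant question — more precisely one detects non-surjectivity via a cohomological obstruction that only depends on the homotopy class. Here is where the hypothesis enters. Writing $\F_2 / [\F_2,\F_2] \cong \Z^2$ and $[\F_2,\F_2]/[\F_2,[\F_2,\F_2]] \cong \Z$ (generated by the class of the commutator), the condition $c \notin [\F_2,[\F_2,\F_2]]$ says that $c$ has nontrivial image in this two-step nilpotent quotient $N$ of $\F_2$, i.e. either a nonzero abelianization or, failing that, a nonzero commutator-coefficient. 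I would argue that it is enough to understand the word map associated to a word lying in $N$ — passing from $w$ to its image in $\mathrm{U}(n) \ast N$ changes the equation but not its solvability in the relevant sense, because the deeper commutators map trivially on the relevant homotopy-theoretic data — and then compute the effect on $H^*(-;\Z)$ (or mod $p$) of the three model maps $(a,b)\mapsto a$, $(a,b)\mapsto ab$, and $(a,b)\mapsto [a,b]$. The classical Hopf computation of $H^*(\mathrm{U}(n))$ as an exterior algebra, together with the fact that multiplication pulls back the primitive generators to the sum of the two factors, shows that as long as the abelianized content is nonzero the map has nonzero degree and is surjective; the genuinely new input, handled by the machinery of Section~\ref{topsu} (the analysis of $\mathrm{PU}(p)$ localized at $p$ and the Kishimoto--Kono computations \cite{MR2544124}), is needed precisely to deal with the remaining case where the content is a nontrivial commutator modulo $[\F_2,[\F_2,\F_2]]$ but has trivial abelianization, where one instead extracts an obstruction living in mod-$p$ cohomology of $\mathrm{PU}(p)$ and in a homotopy group of a sphere.

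The main obstacle I anticipate is exactly this last case — a content that is a nonzero commutator class with vanishing abelianization. There the naive degree vanishes (the map factors, up to homotopy, through a space whose top cohomology is killed), so surjectivity of $w$ on $\mathrm{U}(n)$ need not hold and one must instead pass to $\mathrm{SU}(p)$ (hence the appearance of the prime $p$ and of $\mathrm{PU}(p)$ in the first main theorem) and detect the image using the finer $p$-local homotopy type: one shows the composite of the word map with an appropriate map $\mathrm{SU}(p)\to S^{2p-1}$ (or into a Moore space) is $p$-locally essential, using that the relevant element of $\pi_*(S^{2p-1})$ — an $\alpha$-family/$\beta$-family type class tied to the commutator structure — is nonzero. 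Assembling this across all $p$ and invoking the product-and-quotient argument then yields solvability over every hyperlinear group, and the algebraic-variety/finite-approximation argument yields the finite extension when $G$ is finite. The bookkeeping translating "content not in $[\F_2,[\F_2,\F_2]]$" into a concrete nonvanishing statement about a cohomology operation or a homotopy class is the part that will require care, and is where the extension of \cite{MR2544124} is used.
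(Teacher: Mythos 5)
Your overall strategy coincides with the paper's: use Pestov's products-and-quotients reduction to pass from a hyperlinear $G$ to compact (projective/special) unitary groups, replace $w$ by its content $\varepsilon(w)$ using connectedness, detect surjectivity of the word map on top cohomology (Hopf's computation when the abelianized content is nonzero; the $p$-local analysis of $\PU(p)$, Samelson products and the Kishimoto--Kono splitting when the content is a commutator class), and finish the finite case by residual finiteness of finitely generated linear groups (the paper cites Mal'cev here; your ``algebraic subvariety / finite approximation'' remark should be replaced by exactly this).

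There is, however, one concrete gap in your reduction. You propose to show that every equation with content $c\notin[\F_2,[\F_2,\F_2]]$ is solvable \emph{in} $\U(n)$ for every $n$, but the cohomological method does not deliver this when $c\in[\F_2,\F_2]$: the mod-$p$ cohomology of $\PU(n)$ is only exploitable when $n=p$ is prime, and even then Theorem \ref{sup} requires $c\notin[\F_2,\F_2]^p[\F_2,[\F_2,\F_2]]$, i.e.\ that $p$ does not divide the integer $\sum_k n_km_kl_k$ attached to $c\equiv\prod_k[x_1^{n_k},x_2^{m_k}]^{l_k}$ in the Heisenberg quotient. So what one actually proves is solvability in $\SU(p)$ for all but finitely many primes $p$, and the reduction only closes if one first shows that a (countable) hyperlinear group embeds into a quotient of $\prod_{p\in\mathbb P}\SU(p)$ with the approximating ranks being arbitrarily large primes --- this is Proposition \ref{fixp}, proved by padding the unitary approximations (diagonal embeddings $\U(n)\subset\U(nm)$, then $\U(n)\subset\SU(n+1)$) and invoking Dirichlet's theorem on primes in arithmetic progressions; since the ranks tend to infinity, the finitely many bad primes for a given content can be discarded. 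Without this substitute for $\prod_n\U(n)$ your first step does not connect to what the homotopy-theoretic core can prove.
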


In order to prove our main result we have to refine the study of Gerstenhaber-Rothaus on the effect of word maps on cohomology of compact Lie groups. 
Again, the strategy is to show that such equations can be solved in $\SU(n)$ for sufficiently many $n \in \N$. More specifically, we prove:

\begin{theorem} \label{sup}
Let $p$ be a prime number. Let $w \in \SU(p) \ast {\mathbf F}_2$ be a group word. If $$\varepsilon(w) \not \in [{\mathbf F}_2,{\mathbf F}_2]^p [{\mathbf F}_2,[{\mathbf F}_2,{\mathbf F}_2]],$$ then the equation $w(a,b)=1$ can be solved in $\SU(p)$.
\end{theorem}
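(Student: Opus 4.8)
The plan is to run the Gerstenhaber--Rothaus strategy in a refined form: reduce solvability of $w(a,b)=1$ to surjectivity of an associated word map, and detect this surjectivity by a homotopy-theoretic degree, the novelty being that for the content we allow (commutators, but not deeper ones) the classical integral degree vanishes and one must use a secondary, $p$-local invariant of $\PU(p)$. Write $v:=\varepsilon(w)\in\F_2$. Since $\SU(p)$ is path-connected, pushing each coefficient of $w$ along a path to the identity shows that $w\colon\SU(p)\times\SU(p)\to\SU(p)$ is homotopic to the coefficient-free word map of $v$. The equation is solvable in $\SU(p)$ once $w$ hits $1\in\SU(p)$, and this happens whenever the composite $w^{\bullet}\colon\SU(p)\times\SU(p)\xrightarrow{\,w\,}\SU(p)\to\SU(p)/\bigl(\SU(p)\setminus\{1\}\bigr)\cong S^{p^2-1}$ (here $p^2-1=\dim\SU(p)$) is not null-homotopic. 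As $w$ is homotopic to the word map of $v$, the class $[w^{\bullet}]$ depends only on $v$, so the goal is to show $[w^{\bullet}]\neq 0$ under the hypothesis $v\notin[\F_2,\F_2]^p[\F_2,[\F_2,\F_2]]$.

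I would first transport the problem to $\PU(p)$. The covering $\pi\colon\SU(p)\to\PU(p)=\SU(p)/\mu_p$ is a local diffeomorphism near $1$, hence induces a degree $\pm1$ homotopy equivalence $\SU(p)/(\SU(p)\setminus\{1\})\to\PU(p)/(\PU(p)\setminus\{1\})$, both identified with $S^{p^2-1}$; pushing the coefficients of $w$ into $\PU(p)$ produces a word $\bar w$ with $\pi\circ w=\bar w\circ(\pi\times\pi)$, so that $[w^{\bullet}]=\pm(\pi\times\pi)^{*}[\bar w^{\bullet}]$, where $\bar w^{\bullet}\colon\PU(p)\times\PU(p)\to S^{p^2-1}$ collapses $\PU(p)\setminus\{1\}$. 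Thus it suffices to produce $[\bar w^{\bullet}]\neq 0$ and to check that this invariant is not killed by $(\pi\times\pi)^{*}$ (which, since the invariant is supported near $1$ and $\pi$ is a local diffeomorphism there, will be automatic).

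Now the case split on the content. If $v\notin[\F_2,\F_2]$, with exponent-sum vector $(e_1,e_2)\neq(0,0)$, then one of the auxiliary self-maps of $\SU(p)\times\SU(p)$, $(a,b)\mapsto(a,w(a,b))$ or $(a,b)\mapsto(b,w(a,b))$, has integral mapping degree $\pm e_2^{\,p-1}$ or $\pm e_1^{\,p-1}$ --- this is a Gerstenhaber--Rothaus-style computation using that the exterior generators of $H^{*}(\SU(p);\Q)$ are primitive and that the $k$-th power map multiplies primitives by $k$ --- hence is surjective, so $w$ hits $1$. The essential case is $v\in[\F_2,\F_2]\setminus[\F_2,\F_2]^p[\F_2,[\F_2,\F_2]]$, so that $v\equiv[x_1,x_2]^{d_0}$ modulo $[\F_2,[\F_2,\F_2]]$ with $p\nmid d_0$. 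In this case all primary cohomological invariants vanish: the word map of a commutator pulls back every primitive class to zero, and $H^{*}(\PU(p);\Z/p)\cong\bigl(\Z/p[x_2]/(x_2^{p})\bigr)\otimes\Lambda(x_1,x_3,\dots,x_{2p-3})$ with $x_2=\beta x_1$ and $\mathcal P^{1}x_2=x_2^{p}$ is generated (after an obvious change of generators) by primitives, so $\bar w^{*}$ is trivial on $\widetilde H^{*}(\PU(p);\Z/p)$ --- as is $w^{*}$ on $\widetilde H^{*}(\SU(p);\Z/p)$. One must therefore detect $[\bar w^{\bullet}]$ by a secondary invariant, namely the effect of $\bar w$ on the top cell of $\PU(p)$ relative to its $(p^2-2)$-skeleton, which is measured through the operation $\mathcal P^{1}$ and the order-$p$ element $\alpha_1$ in the relevant stable stem. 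Using the $p$-local homotopy type of $\PU(p)$ --- in particular the attaching maps of its cells up to dimension $p^2-1$, extending the computations of Kishimoto and Kono --- one shows that this secondary invariant is a nonzero multiple of $d_0$ modulo $p$. Since $p\nmid d_0$ it is nonzero, so $[\bar w^{\bullet}]\neq 0$, hence $[w^{\bullet}]\neq 0$, and $w(a,b)=1$ is solvable in $\SU(p)$.

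The main obstacle is precisely this last computation. It splits into (i) showing that $[\bar w^{\bullet}]\in[\PU(p)\times\PU(p),S^{p^2-1}]$ depends only on the class of $v$ in $\F_2/[\F_2,\F_2]^p[\F_2,[\F_2,\F_2]]$ --- equivalently, that the contributions of triple Samelson products and of $p$-th powers in $\PU(p)$ to this invariant disappear once $\PU(p)\setminus\{1\}$ is collapsed --- so that it is enough to handle the model words $[x_1,x_2]^{d_0}$; and (ii) the explicit $p$-local homotopy-theoretic input describing how the commutator map on $\PU(p)$ interacts with the $x_2$-tower in its mod $p$ cohomology. Part (ii) carries the bulk of the work, and it is here that passing to $\PU(p)$ is indispensable: it is exactly the low-dimensional cells supporting $x_1$ and $x_2=\beta x_1$ --- absent in the $2$-connected group $\SU(p)$ --- that keep this secondary operation alive and produce the coefficient $d_0$.
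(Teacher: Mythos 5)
Your overall architecture (Gerstenhaber--Rothaus reduction, homotopy invariance of the word map, passage to $\PU(p)$, $p$-local homotopy theory, and reduction of the hypothesis on $\varepsilon(w)$ to a coefficient $d_0$ prime to $p$) matches the paper, but the proposal contains two genuine errors at the decisive points. First, your claim that $H^{*}(\PU(p);\Z/p\Z)$ is primitively generated after a change of generators, so that $\bar w^{*}$ vanishes on reduced mod $p$ cohomology and a \emph{secondary} invariant (via $\mathcal P^{1}$ and $\alpha_1$) is needed, is false and inverts the actual mechanism. By Baum--Browder the coproduct satisfies $\Delta(y_i)=y_i\otimes 1+1\otimes y_i+\sum_j\binom{j-1}{i-1}y_j\otimes y^{i-j}$, which is not cocommutative; a commutative primitively generated Hopf algebra over $\F_p$ would be cocommutative, so no change of generators can make the $y_i$ primitive. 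It is precisely this failure that makes the \emph{primary} mod $p$ invariant nonzero: the commutator map $c\colon\PU(p)\times\PU(p)\to\SU(p)$ satisfies $c^{*}(x_i)\equiv a_i\,(y\otimes y_{i-1})$ with $a_i\in\Z_{(p)}^{\times}$ (this is where the lens-space splitting and Bott's Samelson product computation enter), whence $c^{*}(x_2\cdots x_p)\neq 0$ in $H^{p^2-1}(\PU(p)^{2};\Z/p\Z)$ and surjectivity follows. Your ``secondary invariant'' is both unnecessary and left entirely uncomputed (``one shows that this invariant is a nonzero multiple of $d_0$''), and since that computation \emph{is} the theorem, the proposal does not actually prove anything in the essential case. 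Likewise your part (i), the reduction to the model words $[x_1,x_2]^{d_0}$, is only asserted; the paper obtains it from primitivity of the $x_i$ in $H^{*}(\SU(p))$ (giving $w^{*}(x_i)\equiv(\sum_k n_km_kl_k)\,a_i\,(y\otimes y_{i-1})$ for $w=\prod_k[x_1^{n_k},x_2^{m_k}]^{l_k}$) together with the Heisenberg group $\mathbf H_3(\Z)/p\Z$ identifying $\sum_k n_km_kl_k \bmod p$ with the image of $\varepsilon(w)$ in $\F_2/[\F_2,\F_2]^p[\F_2,[\F_2,\F_2]]$.

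Second, your transport of the problem to a word $\bar w\colon\PU(p)\times\PU(p)\to\PU(p)$ with \emph{target} $\PU(p)$ loses information: $\pi$ does not map $\SU(p)\setminus\{1\}$ into $\PU(p)\setminus\{\bar 1\}$ (the nontrivial central elements go to $\bar 1$), so the claimed ``degree $\pm1$ homotopy equivalence'' of collapses is not even a well-defined map, and nonvanishing of $[\bar w^{\bullet}]$ only shows that $w$ hits the center $Z(\SU(p))$, not $1$ itself. Since $\varepsilon(w)\in[\F_2,\F_2]$ one cannot adjust a preimage of a central element by central factors to reach $1$. The paper avoids this by descending only the \emph{source}: because the exponent sums of $x_1,x_2$ in $w$ vanish, the word map is well defined as $w\colon\PU(p)\times\PU(p)\to\SU(p)$, and nonvanishing of $w^{*}$ on the fundamental class $x_2\cdots x_p$ of $\SU(p)$ gives surjectivity onto $\SU(p)$, hence a preimage of $1\in\SU(p)$ whose lift to $\SU(p)^{2}$ solves the equation. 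You would need to restructure your argument along these lines for the conclusion to follow.
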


If $\varepsilon(w) \not \in [\F_2,\F_2]$, then this theorem is a direct consequence of the work of Gerstenhaber and Rothaus. However, if $\varepsilon(w) \in [\F_2,\F_2]$, then a new idea is needed. We show -- under the conditions on $p$ which are mentioned above -- that the induced word map $w \colon  \PU(p) \times \PU(p) \to \SU(p)$ is surjective, where $\SU(p)$ denotes the special unitary group and $\PU(p)$ its quotient by the center. The strategy is to replace $w$ by the much simpler and homotopic map induced by $\varepsilon(w)$ and study its effect on cohomology directly. This is done in Section \ref{sec2} with the necessary preparations from Section \ref{topsu}. 

In general, the assumption on $\varepsilon(w)$ cannot be omitted in the previous theorem. Indeed, the second author showed in previous work:

\begin{theorem}[\cite{MR3043070}]
For every $k \in \N$ and every neighborhood ${\mathcal V} \subset \SU(k)$ of $1_k \in \SU(k)$, there exists $w \in {\mathbf F}_2 \setminus \{e\}$, such that the image of $w \colon   \SU(k)^{\times 2} \to  \SU(k)$ lies in ${\mathcal V}$. In particular, the equation $w(a,b)=g$ is not solvable in $\SU(k)$ for $g \not\in {\mathcal V}$.
\end{theorem}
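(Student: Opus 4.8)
The plan is to produce, for a given $k$ and a given neighbourhood $\mathcal V$ of $1_k\in\SU(k)$, a nontrivial $w\in\F_2$ whose image under the word map $\SU(k)^{\times 2}\to\SU(k)$ lies in $\mathcal V$; the ``in particular'' clause is then automatic, since a solution $(a,b)$ of $w(a,b)=g$ would exhibit $g$ in that image. The strategy is to build $w$ by an iterated-commutator contraction, reducing the whole problem to the construction of a single sufficiently ``small'' word.

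The analytic engine is the elementary commutator estimate in the unitary group. Writing $g=1+a$ and $h=1+b$ one gets $gh-hg=ab-ba$, hence $\|[g,h]-1\|=\|gh-hg\|\le 2\|g-1\|\,\|h-1\|$ for all $g,h\in\U(k)$, and the quantity $\|\,\cdot-1\|$ is conjugation invariant. Granting a nontrivial word $w$ with $W:=\sup_{\SU(k)^{\times 2}}\|w(a,b)-1\|<\tfrac12$, I would iterate $w\rightsquigarrow w':=[w,\phi(w)]\in\F_2$ over automorphisms $\phi$ of $\F_2$. Because $\phi$ induces a homeomorphism of $\SU(k)^{\times 2}$, the word $\phi(w)$ has the same image as $w$, so the resulting suprema obey $W_{n+1}\le 2W_n^2$ and hence $W_n\to 0$ doubly exponentially fast. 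Given $\mathcal V$, one then fixes $n$ with $\{g\in\SU(k):\|g-1\|<W_n\}\subseteq\mathcal V$ and takes $w=w_n$.

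Keeping the iterates nontrivial is routine free-group bookkeeping: two elements of a free group commute exactly when they lie in a common cyclic subgroup, so for any nontrivial $u\in\F_2$ one can choose $\phi\in\operatorname{Aut}(\F_2)$ --- for instance $a\leftrightarrow b$ or $b\mapsto aba^{-1}$, varying with the step if need be --- so that $u$ and $\phi(u)$ are not both powers of a common element; then $[u,\phi(u)]\ne e$. A short induction also keeps each $w_n$ dependent on both generators, which is all that is needed.

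The real difficulty --- the point requiring a new idea --- is the base case: exhibiting even one nontrivial word with $\sup_{\SU(k)^{\times 2}}\|w(a,b)-1\|<\tfrac12$ (any method doing this will presumably reach an arbitrary $\varepsilon$ directly, so the iteration above is a convenience, not the heart of the matter). This cannot be done with powers or commutators of the generators: already the ordinary commutator map on $\SU(k)$ is surjective (every element of a compact connected semisimple Lie group is a commutator), and the same obstruction --- images that are large conjugation-invariant sets containing elements with $-1$ among their eigenvalues, so the supremum equals $2$ --- afflicts the obvious variants obtained by iterating commutators of $a$ and $b$. I would therefore search for $w$ among much longer words adapted to the particular group $\SU(k)$: for example elements lying deep in a quantitatively controlled lower central series of the finite-index normal subgroup $K\trianglelefteq\F_2$ defined as the intersection of the kernels of all homomorphisms $\F_2\to\Gamma$, for a large but carefully chosen finite subgroup $\Gamma\le\SU(k)$, combined with a word-length (hence Lipschitz) bound, and then bootstrap via the contraction above. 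Upgrading ``small on $\Gamma^{\times 2}$'' to ``small on all of $\SU(k)^{\times 2}$'' is precisely the crux, and I expect it to be the principal obstacle of the whole proof.
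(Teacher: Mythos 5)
This theorem is not proved in the present paper at all --- it is quoted from the second author's earlier work \cite{MR3043070} --- so there is no internal proof to compare against; I am therefore judging your proposal on its own terms and against that reference. Your overall architecture is the right one: the estimate $\|[g,h]-1\|\le 2\,\|g-1\|\,\|h-1\|$ (best taken in the operator norm, which is submultiplicative; the normalized Frobenius norm is not, though the resulting neighborhoods are cofinal either way), the bootstrap $w\mapsto[w,\phi(w)]$ with $W_{n+1}\le 2W_n^2$, the free-group bookkeeping needed to keep the iterates nontrivial, and the observation that the ``in particular'' clause is automatic are all correct.

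The genuine gap is exactly where you place it: the base case. As written, your argument proves only that an almost law of quality $<\tfrac12$ can be upgraded to arbitrary quality, and the sketch you offer for producing the first such word (words trivial on a finite subgroup $\Gamma\le\SU(k)$, then ``upgrade from $\Gamma$ to $\SU(k)$'') is not carried out and is not the mechanism that works. The missing idea is a combination of three ingredients, none involving finite subgroups. First, by Dirichlet pigeonhole on the $k$ eigenvalue angles there is $N=N(k)$ such that every $a\in\SU(k)$ has a power $a^{q}$ with $1\le q\le N$ and $\|a^{q}-1\|\le\tfrac1{100}$. Second, your own contraction already produces, for every $\varepsilon>0$, a nontrivial $v_\varepsilon\in\F_2$ with $\|v_\varepsilon(u,v)-1\|\le\varepsilon$ whenever $u,v$ both lie in the $\tfrac1{100}$-ball around $1$ (here the swap $[v,v(y,x)]$ must be replaced by, say, $[v,\,y v y^{-1}]$ to avoid the collapse $[[x,y],[y,x]]=e$). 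Third, one forms a left-normed iterated commutator of the $N^2$ words $v_\varepsilon(x^{q},y^{r})$, $1\le q,r\le N$, suitably conjugated so that the result is nontrivial in $\F_2$. For any pair $(a,b)$ at least one factor is $\varepsilon$-close to $1$, so the iterated commutator is $4^{N^2}\varepsilon$-close to $1$; since $N$ depends only on $k$ and not on $\varepsilon$, choosing $\varepsilon<4^{-N^2}/2$ closes the base case. The point your proposal misses is precisely this decoupling: the exponential loss coming from the long commutator is harmless because it is independent of how small $\varepsilon$ is chosen. Without some such device, the proposal does not prove the theorem.
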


The construction that proves the preceding theorem yields words in $\F_2$ that lie deep in the derived series, so that there is no contradiction with Theorem \ref{sup}. 

The surjectivity of word maps without coefficients is an interesting subject in itself. Michael Larsen conjectured that for each non-trivial $w \in \F_2$ and $n$ high enough, the associated word map $w \colon  \PU(n) \times \PU(n) \to \PU(n)$ is surjective. This was shown (with some divisibility restrictions on $n$) for words not in the second derived subgroup of $\F_2$ by Elkasapy and the second author in \cite{MR3043070}.
In a similar direction, we believe that for $n$ high enough -- or again, with some divisibility restrictions -- the word map $w$ should define a non-trivial homotopy class and not even be homotopic to a non-surjective map.

\vspace{0.2cm}

In order to study words which lie deeper in the lower central series, we suspect that it might be helpful to oberserve that the induced word map $w \colon  \PU(p) \times \PU(p) \to \PU(p)$ does not only lift to $\SU(p)$  -- which is the simply connected cover of $\PU(p)$ -- but lifts even to higher connected covers of $\PU(p)$. Indeed, for example one can show that if $w \in [\F_2,[\F_2,\F_2]]$, then the associated word maps lifts to the complex analogue of the string group, see \cite{MR2079378} for a study of related groups.

\section{The topology of $\SU(p)$ and $\PU(p)$}
\label{topsu}
In this section we collect some standard results from algebraic topology that will be used in the proof of the main theorems.
A classical result of Samelson says:

\begin{theorem}[Samelson, see \cite{MR0065157}]
The commutator map $c_2^{{\rm \SU(2)}} \colon  {\rm SU}(2)^{\times 2} \to {\rm SU}(2)$ is not null-homotopic. In particular,  since ${\rm SU}(2)$ is a sphere, any map homotopic to the commutator map must be surjective.
\end{theorem}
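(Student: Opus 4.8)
The plan is to use the diffeomorphism $\SU(2) \cong S^3$ (the unit quaternions) and to recognise the homotopy class of the commutator map as a Samelson product, which can then be computed inside the classifying space $\mathbb{HP}^\infty = B\SU(2)$. First I would observe that $c := c_2^{\SU(2)}$ is constant on the wedge $S^3 \vee S^3 \subset S^3 \times S^3$, since $c(g,1) = c(1,h) = 1$, so it factors as $c = \tilde c \circ q$, where $q \colon S^3 \times S^3 \to (S^3 \times S^3)/(S^3 \vee S^3) = S^3 \wedge S^3 = S^6$ is the collapse map and $\tilde c \in \pi_6(S^3)$ is, by definition, the Samelson square $\langle \iota_3, \iota_3 \rangle$ of the fundamental class $\iota_3 \in \pi_3(S^3)$. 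Since suspensions of Whitehead products are null-homotopic, the Puppe sequence of the cofibration $S^3 \vee S^3 \hookrightarrow S^3 \times S^3$ shows that $q^* \colon [S^6, S^3] \to [S^3 \times S^3, S^3]$ is injective; hence $c$ is null-homotopic if and only if $\langle \iota_3, \iota_3 \rangle = 0$ in $\pi_6(S^3) \cong \Z/12$, and it suffices to show $\langle \iota_3, \iota_3 \rangle \neq 0$.

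To this end I would pass to the classifying space: the universal principal bundle $\SU(2) \to E\SU(2) \to \mathbb{HP}^\infty$ gives an isomorphism $\pi_6(S^3) \cong \pi_7(\mathbb{HP}^\infty)$ which carries the Samelson product to the Whitehead product, up to sign, so it is enough to prove $[\iota_4, \iota_4] \neq 0$ in $\pi_7(\mathbb{HP}^\infty)$, where $\iota_4 \in \pi_4(\mathbb{HP}^\infty) \cong \Z$ is the generator, represented by $\mathbb{HP}^1 = S^4 \hookrightarrow \mathbb{HP}^\infty$. Since $\mathbb{HP}^\infty$ and its $8$-skeleton $\mathbb{HP}^2 = S^4 \cup_\nu e^8$ have the same seventh homotopy group, and the attaching map $\nu \colon S^7 \to S^4$ is the quaternionic Hopf map, the long exact sequence of the pair $(\mathbb{HP}^2, S^4)$ together with homotopy excision (Blakers--Massey) identifies $\pi_7(\mathbb{HP}^2)$ with $\pi_7(S^4)/\langle \nu \rangle \cong \Z/12$. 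Using the classical splitting $\pi_7(S^4) \cong \Z\{\nu\} \oplus \Z/12\{\Sigma \nu'\}$ and Toda's formula $[\iota_4, \iota_4] = 2\nu - \Sigma \nu'$, the class $[\iota_4, \iota_4]$ maps to $-\Sigma \nu'$, a generator of $\Z/12$; in particular it is non-zero, so $\langle \iota_3, \iota_3 \rangle$ in fact generates $\pi_6(S^3)$ and $c$ is not null-homotopic. Finally, if some $f \colon S^3 \times S^3 \to S^3$ homotopic to $c$ missed a point, then $f$ would factor through $S^3 \setminus \{\mathrm{pt}\} \cong \R^3$, which is contractible, forcing $f$ to be null-homotopic and contradicting $[f] = [c] \neq 0$; thus every map homotopic to $c$ is surjective.

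The main obstacle is clearly the middle step: it rests on several classical but non-trivial inputs from the homotopy theory of spheres --- the computation $\pi_7(S^4) \cong \Z \oplus \Z/12$, the identification of the attaching map of $\mathbb{HP}^2$ with the Hopf map, Toda's formula for the Whitehead square, and the Samelson--Whitehead correspondence. Since only non-vanishing of $\langle \iota_3, \iota_3 \rangle$ is needed, the signs and normalisations in these formulas are irrelevant; all that really matters is that $[\iota_4, \iota_4]$ is not a multiple of $\nu$ in $\pi_7(S^4)$. A more elementary-looking alternative would be to detect $\tilde c$ directly by evaluating a secondary (mod $2$ and mod $3$) cohomology operation on the mapping cone $S^3 \cup_{\tilde c} e^7$, but that is no less technical, so I would prefer the argument through $B\SU(2)$ above.
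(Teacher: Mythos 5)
Your argument is correct, but note that the paper does not actually prove this statement: it is quoted as a classical theorem with a reference to Samelson's 1954 paper, so there is no internal proof to compare against. What you have written is a complete, essentially self-contained modern proof: the reduction of $c$ to the Samelson square $\langle \iota_3,\iota_3\rangle \in \pi_6(S^3)$ via the cofibration $S^3\vee S^3 \hookrightarrow S^3\times S^3$ (with injectivity of $q^*$ coming from the vanishing of the suspended Whitehead product) is sound, as is the passage to $[\iota_4,\iota_4]\in\pi_7(\mathbb{HP}^\infty)$ and the identification $\pi_7(\mathbb{HP}^2)\cong\pi_7(S^4)/\langle\nu\rangle$. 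The one wrinkle is notational: in Toda's conventions $\nu'$ has order $4$ in $\pi_6(S^3)\cong\Z/12$, so writing the splitting as $\Z\{\nu\}\oplus\Z/12\{\Sigma\nu'\}$ and quoting $[\iota_4,\iota_4]=2\nu-\Sigma\nu'$ is internally inconsistent as stated. This does not affect your conclusion, and there is a clean way to justify exactly what you need without citing the formula at all: since $\Sigma[\iota_4,\iota_4]=0$ in $\pi_8(S^5)\cong\Z/24$ while $\Sigma(2\nu)=2\nu_5$ has order $12$ there, the torsion component $t$ of $[\iota_4,\iota_4]=2\nu+t$ must have order $12$, so $[\iota_4,\iota_4]$ is not a multiple of $\nu$ and its image generates $\pi_7(S^4)/\langle\nu\rangle\cong\Z/12$. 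This in fact yields the sharper statement that $\langle\iota_3,\iota_3\rangle$ generates $\pi_6(S^3)$, which is stronger than what the theorem (and the paper's use of it) requires. The final surjectivity step via contractibility of $S^3\setminus\{\mathrm{pt}\}$ is exactly the observation the paper relies on.
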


This easily implies that for every group word $w \in {\rm SU}(2) \ast {\mathbf F}_2$ whose content is the commutator of the generators of ${\mathbf F}_2$, the equation $w(a,b)=1$ can be solved in ${\rm SU}(2)$. This already has non-trivial consequences that (to the best of our knowledge) could not be proved using combinatorial techniques. In order to treat ${\rm SU}(n)$ for higher $n$, we have to recall some aspects of algebraic topology.
Our methods in the proof of the main results follow closely ideas from Hamanaka-Kishimoto-Kono \cite{MR2243721} and Kishimoto-Kono \cite{MR2544124}.

\subsection{The cohomology of $\SU(p)$ and $\PU(p)$}
\label{cohom}
 Let $n$ be a positive integer. We denote by $\SU(n)$ the special unitary group and by $\PU(n)$ the quotient of $\SU(n)$ by its center -- the projective unitary group. We denote the quotient map by $\pi \colon \SU(n) \to \PU(n)$ and the coset of some $u \in \SU(n)$ in $\PU(n)$ by $\pi(u)=\bar u$.  
The cohomology rings of the simply-connected classical Lie groups were computed by Borel in \cite{MR0051508}. For example, it is well-known that as a graded ring $$H^*(\SU(n),\Z/p\Z) = \Lambda_{\Z/p\Z}^*(x_2,x_3,\dots,x_{n})$$ with $|x_i| = 2i-1$. Here, we denote by $\Lambda^*_k$ the exterior algebra over a field $k$ on a certain set of generators of particular degrees. The product map $m \colon \SU(n) \times \SU(n) \to \SU(n)$ and the inversion turn $H^*(\SU(n),\Z/p\Z)$ into a Hopf algebra. However, the comultiplication turns out to be trivial in this situation, i.e., $\Delta(x_i)= x_i \otimes 1 + 1 \otimes x_i$ for all $2 \leq i \leq n$. We will be mainly interested in the case $n=p$.
The computation of the cohomology ring of $\PU(p)$ is more involved than that of $\SU(p)$ and was also first studied by Borel in \cite{MR0064056}. Later, the comultiplication on the cohomology of $\PU(p)$ with $\Z/p\Z$-coefficients was computed in work of Baum-Browder \cite{MR0189063} and turns out to be not co-commutative. It is this lack of co-commutativity which makes our approach work. Let us sumarize the situation:

\begin{theorem}[Baum-Browder, see \cite{MR0189063}] 
\label{baumbrowder}
Let $p$ be an odd prime number. Then,
$$H^*(\PU(p),\Z/p\Z) \cong (\Z/p\Z)[y]/(y^{p}) \otimes_{\Z} \Lambda_{\Z/p\Z}^*(y_1,y_2,\dots,y_{p-1})$$ with $|y|=2$, $|y_i| = 2i-1$, $\pi^*(y_i)=x_{i}$ for $2 \leq i \leq p-1$, and $\pi^*(y)=\pi^*(y_1)=0$. Moreover, the comultiplication takes the form
$$\Delta(y)=y\otimes 1 + 1 \otimes y, \quad \mbox{and} \quad\Delta(y_i)=y_i \otimes 1 + 1 \otimes y_i + \sum_{j=1}^{i-1} \binom{j-1}{i-1} \cdot y_{j}\otimes y^{i-j}.$$
\end{theorem}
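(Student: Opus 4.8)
The plan is to extract Theorem~\ref{baumbrowder} from the mod-$p$ Serre spectral sequence of the fibration
\[
\SU(p) \longrightarrow \PU(p) \xrightarrow{f} B\Z/p\Z
\]
attached to the central extension $1 \to \Z/p\Z \to \SU(p) \to \PU(p) \to 1$. Since this fibration comes from a homomorphism of topological groups it is a fibration of $H$-spaces and $H$-maps, so the spectral sequence is one of Hopf algebras over $\Z/p\Z$. As $\SU(p)$ is connected, the deck group of the cover $\SU(p) \to \PU(p)$ acts trivially on $H^*(\SU(p);\Z/p\Z)$ (translation by a group element is homotopic to the identity), so the local system is trivial, and using Borel's computation \cite{MR0051508} for the fibre together with $H^*(B\Z/p\Z;\Z/p\Z) = \Lambda_{\Z/p\Z}(t) \otimes (\Z/p\Z)[s]$ where $|t| = 1$, $s = \beta(t)$, $|s| = 2$, one obtains
\[
E_2^{*,*} \;\cong\; \bigl(\Lambda_{\Z/p\Z}(t) \otimes (\Z/p\Z)[s]\bigr) \otimes \Lambda_{\Z/p\Z}(x_2,\dots,x_p),
\]
a primitively generated Hopf algebra.

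First I would settle the ring structure. The fibration above is pulled back from the path--loop fibration $\SU(p) \to \ast \to B\SU(p)$ along the map $\phi\colon B\Z/p\Z \to B\SU(p)$ induced by the inclusion of the centre $\zeta \mapsto \zeta\cdot 1_p$; the corresponding rank-$p$ bundle over $B\Z/p\Z$ is the $p$-fold sum of the line bundle with first Chern class $s$, so $\phi^*(c_i) = \binom{p}{i}s^i$, which modulo $p$ vanishes for $1 \le i \le p-1$ and equals $s^p$ for $i = p$. By naturality of the transgression, $x_i$ transgresses to $0$ for $2 \le i \le p-1$ and is therefore a permanent cycle, giving a class $y_i \in H^{2i-1}(\PU(p);\Z/p\Z)$ with $\pi^*(y_i) = x_i$, while $x_p$ transgresses to $s^p$, so $d_{2p}(x_p) = s^p$. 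A short bookkeeping --- the only nonzero differentials are $d_{2p}(x_p) = s^p$ and those it forces by the Leibniz rule --- yields $E_{2p+1} = E_\infty = \Lambda_{\Z/p\Z}(t) \otimes (\Z/p\Z)[s]/(s^p) \otimes \Lambda_{\Z/p\Z}(\bar x_2,\dots,\bar x_{p-1})$, and there is no room for multiplicative extension problems (oddness of $p$ forces $y_1^2 = 0$, and $F^q H^n = 0$ for $q > n$ forces $y^p = 0$). Putting $y_1 := f^*(t)$ and $y := f^*(s) = \beta(y_1)$ recovers the stated ring, the values of $\pi^*$, and the relations $\pi^*(y) = \pi^*(y_1) = 0$.

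For the comultiplication, $y$ and $y_1$ are easy: $y_1$ lies in $H^1(\PU(p);\Z/p\Z)$ and is primitive for degree reasons, $\PU(p)$ being an $H$-space, and $y = \beta(y_1)$ is then primitive because $\mu^* = \Delta$ commutes with the Bockstein. For the remaining $y_i$ I would observe that every page $E_r$ stays primitively generated --- the only differential pairs $x_p$ with the primitive class $s^p$, and $(s^p)$ is a Hopf ideal since $\Delta(s^p) = s^p \otimes 1 + 1 \otimes s^p$ modulo $p$ --- so $E_\infty = \mathrm{gr}\,H^*(\PU(p);\Z/p\Z)$ is cocommutative and each $\bar x_i$ is primitive there. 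Hence $\Delta(y_i) - y_i \otimes 1 - 1 \otimes y_i$ lies in $\ker(\pi^* \otimes \pi^*)$ and in strictly positive filtration; a degree-and-filtration analysis --- $y$ is the only even generator, the $y_j$ are the only odd ones, and $\pi^*(y) = 0$ forces every correction term to carry a factor of $y$ --- shows that, after normalising the generators, this difference is a linear combination of the monomials $y_j \otimes y^{i-j}$, $1 \le j \le i-1$. So the shape of $\Delta(y_i)$ is forced by soft arguments.

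The hard part --- and the only place a genuine computation enters --- is determining the scalars, i.e.\ producing the precise binomial coefficients; the spectral sequence pins $\Delta(y_i)$ down only up to them. I would identify the $y_i$ explicitly and then impose co-associativity of $\Delta$ together with compatibility with the mod-$p$ Steenrod operations through the Cartan formula (the $y_i$ being linked to $y_1$ and $y$ by Steenrod powers), which rigidifies the coefficients into the stated form. Equivalently, one can compute the dual Pontryagin Hopf algebra $H_*(\PU(p);\Z/p\Z)$ directly --- say from the Eilenberg--Moore spectral sequence of $\PU(p) \to B\Z/p\Z \to B\SU(p)$, where $\phi^*(c_i) = \binom{p}{i}s^i$ gives a Koszul-type resolution --- and transpose. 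This last step is the substance of Baum--Browder's theorem \cite{MR0189063}; everything before it is formal.
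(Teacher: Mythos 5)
The paper offers no proof of this statement: it is imported verbatim from Baum--Browder \cite{MR0189063}, so there is no internal argument to compare yours against. Judged on its own terms, the first half of your proposal is correct and standard. The Serre spectral sequence of $\SU(p)\to\PU(p)\to B\Z/p\Z$ with trivial coefficients, the identification of the transgressions with $\phi^*(c_i)=\binom{p}{i}s^i$ via pullback from the universal bundle, the single differential $d_{2p}(x_p)=s^p$, and the resolution of the multiplicative extensions (odd-degree squares vanish for $p$ odd, and $y^p=0$ because $E_\infty^{2p,0}=0$) do establish the ring structure, the values of $\pi^*$, and the primitivity of $y_1$ and $y=\beta(y_1)$. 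This is essentially the Borel/Baum--Browder route.

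The genuine gap is the coproduct formula for $y_i$ with $i\ge 2$, which is exactly the part of the theorem that carries content (the failure of cocommutativity is what drives Lemma \ref{comm}). Your ``soft'' constraints --- counit, coassociativity, $(\pi^*\otimes\pi^*)$-triviality of the correction term, and primitivity in $E_\infty$ --- do not force the stated shape: they are equally satisfied by the fully primitive coproduct, by the cocommutative symmetrization $\sum_j c_j(y_j\otimes y^{i-j}+y^{i-j}\otimes y_j)$, and by expressions involving monomials such as $y_1y_j\otimes y^{k}$ once $i\ge 3$. So coassociativity alone does not ``rigidify the coefficients''; the one-sidedness of the terms $y_j\otimes y^{i-j}$ and the binomial coefficients require the actual computation with Steenrod powers (or the dual Pontryagin-ring calculation) that you name but do not perform. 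As written, your argument proves strictly less than the statement and ends by citing \cite{MR0189063} for the remainder --- which is what the paper does for the whole theorem. One further point worth flagging if you do carry out that computation: as printed, the coefficient $\binom{j-1}{i-1}$ vanishes for all $1\le j\le i-1$ under the usual convention, so the displayed formula is surely a typo for $\binom{i-1}{j-1}$ (equivalently $\binom{i-1}{i-j}$); reproducing ``the stated binomial coefficients'' uncritically would propagate it.
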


We denote by $\mathcal I(n)$ the kernel of the natural augmentation $H^*(\PU(p)^n,\Z/p\Z) \to \Z/p\Z$. We start out by recalling the effect of various natural word maps on the cohomology ring.

\begin{lemma} \label{coprod}
Let $n \in \N$ and $p$ be an odd prime number. Consider the  map $\mu_n \colon  \PU(p) \to \PU(p)$ given by $\mu_n(u)=u^n$.
The map induced by $\mu_n \colon  \PU(p) \to \PU(p)$ on cohomology satisfies
$$\mu_n^*(y_i) = n \cdot y_i \mod {\mathcal I}(1)^2$$
for all $1 \leq i \leq p-1$.
\end{lemma}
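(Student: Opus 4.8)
The plan is to compute $\mu_n^*$ modulo $\mathcal{I}(1)^2$ by expressing the power map $\mu_n$ as an $n$-fold product and iterating the coproduct formula from Theorem \ref{baumbrowder}. Concretely, $\mu_n = m \circ (\mu_{n-1} \times \id) \circ \Delta_{\PU(p)}$, where $\Delta_{\PU(p)}$ is the diagonal and $m$ the group multiplication; inducting on $n$, one sees $\mu_n^* = \Delta^{(n)} \circ (\mu_1 \otimes \cdots \otimes \mu_1)^*$ where $\Delta^{(n)}$ is the iterated comultiplication landing in $H^*(\PU(p)^{\times n})$, pulled back along the diagonal. The point is that everything is being tracked only modulo $\mathcal{I}(1)^2$, i.e., modulo decomposables in the augmentation ideal.

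The key observation is that in the Baum-Browder formula
$$\Delta(y_i) = y_i \otimes 1 + 1 \otimes y_i + \sum_{j=1}^{i-1} \binom{j-1}{i-1} y_j \otimes y^{i-j},$$
every term in the sum $\sum_{j=1}^{i-1}$ is a product $y_j \otimes y^{i-j}$ of two elements of the augmentation ideal (note $y^{i-j}$ has positive degree since $i-j \geq 1$, and $y$ is in the augmentation ideal because $|y|=2>0$). Hence under the diagonal pullback, after which $\otimes$ becomes cup product, each such term lands in $\mathcal{I}(1)^2$ and therefore vanishes modulo $\mathcal{I}(1)^2$. So modulo $\mathcal{I}(1)^2$ the comultiplication on $y_i$ is \emph{primitive}: $\Delta(y_i) \equiv y_i \otimes 1 + 1 \otimes y_i$. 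Iterating, $\Delta^{(n)}(y_i) \equiv \sum_{k=1}^{n} 1 \otimes \cdots \otimes y_i \otimes \cdots \otimes 1$ modulo the ideal generated by decomposables, and pulling back along the diagonal gives $\mu_n^*(y_i) \equiv n \cdot y_i \pmod{\mathcal{I}(1)^2}$, as claimed.

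To make the induction rigorous I would set up the statement as: for the map $\nu_n \colon \PU(p) \to \PU(p)^{\times k}$ (suitable $k$) refining the power map, one has the congruence on the level of the indecomposables $Q H^*(\PU(p),\Z/p\Z)$, on which $\mu_n^*$ acts and where $y_i$ has a well-defined class; the formula $\mu_n^* = \mu_{n-1}^* + \id^*$ on $Q H^*$ follows from the primitivity-mod-decomposables just established, and the base case $n=1$ is trivial. The main obstacle — though it is more bookkeeping than genuine difficulty — is being careful that "mod $\mathcal{I}(1)^2$" is compatible with the Hopf-algebraic manipulations: one must check that the product map $m^*$ and the diagonal $\Delta_{\PU(p)}^*$ both respect the filtration by powers of the augmentation ideal (they do, since ring maps preserve products of augmentation-ideal elements), so that the iterated decomposition of $\mu_n$ can be reduced modulo $\mathcal{I}(1)^2$ term by term. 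Once that compatibility is in hand, the computation is exactly the linear count above.
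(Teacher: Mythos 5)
Your proposal is correct and follows essentially the same route as the paper: the authors likewise factor $\mu_n$ as the diagonal followed by $n$-fold multiplication, apply the iterated coproduct, and observe that all non-primitive terms of the Baum--Browder formula are products of two augmentation-ideal elements and hence die modulo $\mathcal I(1)^2$, leaving $n\cdot y_i$. The paper compresses this into the phrase ``an easy verification shows that each generator is just multiplied by $n$, modulo sums of products of at least two generators''; your write-up simply supplies the verification.
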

\begin{proof}
The map $\mu_n$ arises as the composition of the diagonal embedding ${\rm PU}(p) \to \prod_{i=1}^n {\rm PU}(p)$ following by the multiplication map $m \colon \prod_{i=1}^n {\rm PU}(p) \to {\rm PU}(p)$. In cohomology, this induces first the the $n$-fold coproduct
$\Delta^{(n)} \colon H^*(\PU(p),\Z/p\Z) \to \otimes_{i=1}^n H^*(\PU(p),\Z/p\Z)$ followed by the multiplication in cohomology $\otimes_{i=1}^n H^*(\PU(p),\Z/p\Z) \to H^*(\PU(p),\Z/p\Z)$.
An easy verification shows that each generator is just multiplied by $n$, modulo sums of products of at least two generators. This proves the claim.
\end{proof}

It is also important for us to study the effect of the commutator map in cohomology. We need the following result from the work of Hamanaka-Kishimoto-Kono \cite{MR2243721}, which is an easy consequence of Theorem \ref{baumbrowder} above.

\begin{lemma}[Proposition 6 in \cite{MR2243721}] \label{comm}
The commutator map
$$c \colon  \PU(p)\times\PU(p)\to\PU(p)$$ induces the cohomology map
$c^*\colon H^*(\PU(p),\Z/p\Z)\to H^*(\PU(p)\times\PU(p),\Z/p\Z)$ sending $y_i$ 
to $(i-1)(y_{i-1}\otimes y-y\otimes y_{i-1})$ modulo the ideal $\mathcal I(2)^3$. The elements $y_1$ and $y$ map to zero.
\end{lemma}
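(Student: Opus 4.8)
The plan is to write the commutator map as an explicit composition of standard maps on $\PU(p)$ and then transport everything to cohomology using Theorem \ref{baumbrowder}. Set $A := H^*(\PU(p),\Z/p\Z)$, let $\nu \colon \PU(p) \to \PU(p)$ denote inversion, and let $m^{(4)} \colon \PU(p)^4 \to \PU(p)$ be the four-fold multiplication. One factors
$$c = m^{(4)} \circ (\id \times \id \times \nu \times \nu) \circ d, \qquad d(u,v) = (u,v,u,v),$$
and dualizes, using the Künneth isomorphism $H^*(\PU(p)^n,\Z/p\Z) \cong A^{\otimes n}$ and the fact that $\nu^*$ is the antipode $S$ of the Hopf algebra $A$, to obtain
$$c^* = d^* \circ (\id \otimes \id \otimes S \otimes S) \circ \Delta^{(4)},$$
where $\Delta^{(4)} \colon A \to A^{\otimes 4}$ is the iterated coproduct and $d^*(\alpha_1 \otimes \alpha_2 \otimes \alpha_3 \otimes \alpha_4) = (-1)^{|\alpha_2||\alpha_3|}\,\alpha_1\alpha_3 \otimes \alpha_2\alpha_4$. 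Here $\Delta$ — hence $\Delta^{(4)}$, by coassociativity — is given explicitly by Theorem \ref{baumbrowder}, so in principle one only has to substitute and simplify.

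The step that makes this feasible by hand is a weight bound. Call the \emph{weight} of a monomial $a \otimes b$ in $A \otimes A$ the total number of ring generators it contains, with $y^k$ counted $k$ times. Since $\mathcal I(2)$ is generated as an ideal by the classes $z \otimes 1$ and $1 \otimes z$ with $z$ a generator of $A$, one checks that $\mathcal I(2)^3$ is exactly the span of the monomials of weight $\geq 3$. As multiplication in $A$ and all coproducts are weight non-decreasing, it suffices to compute $\Delta^{(4)}(y_i)$, its image under $\id \otimes \id \otimes S \otimes S$, and the final answer all modulo weight $\geq 3$. By Theorem \ref{baumbrowder} the only term of $\Delta(y_i)$ of weight $\leq 2$ besides the primitive part $y_i\otimes 1 + 1\otimes y_i$ is $(i-1)\,y_{i-1}\otimes y$ — and here the failure of co-commutativity is essential, since no term $y^{i-j}\otimes y_j$ appears — so, iterating, $\Delta^{(4)}(y_i)$ reduces modulo weight $\geq 3$ to the four terms placing $y_i$ in a single tensor slot, plus $(i-1)$ times six terms distributing one factor $y_{i-1}$ and one factor $y$ over two of the four slots. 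From the Hopf-algebra structure one also gets $S(y) = -y$, $S(y_1) = -y_1$, and $S(y_i) = -y_i + (i-1)\,y_{i-1}y$ modulo weight $\geq 3$, by solving the antipode relation by induction on $i$.

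It then remains to push these roughly ten monomials through $\id \otimes \id \otimes S \otimes S$ and then $d^*$ — the Koszul sign in $d^*$ turns out to be $+1$ for every monomial that actually occurs — and to collect terms. The subtlety to watch is that the monomials $y_{i-1}y \otimes 1$ and $1 \otimes y_{i-1}y$ are produced twice: once from the weight-two part $(i-1)\,y_{i-1}y$ of the antipode applied to the copies of $y_i$ in the third and fourth tensor factors, and once with the opposite sign from the antipode applied to the $y$ inside the $y_{i-1}\otimes y$ terms. These cancel, leaving precisely $(i-1)(y_{i-1}\otimes y - y\otimes y_{i-1})$. For $y$ and $y_1$, which are primitive, $\Delta^{(4)}$ has no weight-two terms, and the four remaining terms cancel in pairs after applying $S$ (which is just $-\id$ on primitives) and $d^*$, so $c^*(y) = c^*(y_1) = 0$. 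I expect the only real difficulty to be organisational — keeping the sign conventions for $d^*$ and $S$ straight, and not discarding the $y_{i-1}y$ contributions before the final cancellation step. Failing that, the statement is Proposition 6 of \cite{MR2243721}, which one can simply quote.
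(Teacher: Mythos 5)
Your argument is correct and is exactly the deduction the paper has in mind: the paper offers no proof of its own, merely citing Proposition 6 of Hamanaka--Kishimoto--Kono and remarking that the statement is an easy consequence of Theorem \ref{baumbrowder}, and your Hopf-algebra computation (factoring $c$ through $m^{(4)}$, the inversion and the diagonal, dualizing, and working modulo monomials of weight $\geq 3$, which indeed span $\mathcal I(2)^3$) is precisely that consequence carried out in detail, with the signs and the cancellation of the $y_{i-1}y\otimes 1$ and $1\otimes y_{i-1}y$ terms handled correctly. One minor point: you implicitly read the binomial coefficient in Theorem \ref{baumbrowder} as $\binom{i-1}{j-1}$ rather than the paper's literal (and identically vanishing) $\binom{j-1}{i-1}$ -- that is a typo in the source, and your reading, which yields the coefficient $i-1$ on the unique weight-two term $y_{i-1}\otimes y$ of $\Delta(y_i)$, is the intended one.
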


Note that the commutator map $\SU(n) \times \SU(n) \ni (u,v) \mapsto uvu^{-1}v^{-1} \in \SU(n)$ induces a well-defined map $c \colon  \PU(n) \times \PU(n) \to \SU(n)$, which we will also call commutator map.

Our first aim is to show that the commutator map $c \colon  \PU(p) \times \PU(p) \to \SU(p)$ is not homotopic to a non-surjective map. We will show this by showing that the image of the top-dimensional cohomology class $x_2\cdots x_p \in H^{p^2-1}(\SU(p),\Z/p\Z)$ does not vanish in the group $H^{p^2-1}(\PU(p) \times \PU(p),\Z/p\Z).$ As it turns out, the study the images of $x_2,\dots,x_{p-1}$ in $H^{*}(\PU(p) \times \PU(p),\Z/p\Z)$ is fairly straightforward, since these generators are the image of generators $y_2,\dots,y_{p-1}$ in the cohomology of $\PU(p)$. The study of the last generator $x_p$ is considerably more complicated and we have to rely on some structure results on the $p$-local homotopy type of ${\rm PU}(p)$. In fact, in the final argument we will not rely on Lemma \ref{comm}, but the required result is proved for all $x_i$ with $2 \leq i \leq p$ directly.

\subsection{The lens spaces}
\label{lens}
Since $S^{2d-1} \subset \C^p$, there is a natural $(\Z/p\Z)$-action on $S^{2d-1}$ given by scalar multiplication with the complex number $\exp(2 \pi i/p)$. In our considerations, we only study the case $d=p$. We denote by $L$ the lens space $S^{2p-1}/(\Z/p\Z)$ and let $\rho \colon  S^{2p-1} \to L$ be the natural projection. The space $L$ has a natural CW-structure with one cell in each dimension, see \cite[Example 2.43]{MR1867354}.

We denote by $L_k := S^{2k-1} \cup_p D^{2k}$ the mod-$p$ Moore space for $1 \leq k \leq p-1$. This is, by definition, the space obtained by attaching $D^{2k}$ to $S^{2k-1}$ along the attaching map $p \colon S^{2k-1} \to S^{2k-1}$, which is defined as $(z_1,\dots,z_k) \to (z_1^p,z_2,\dots,z_k)$. Note that the letter $p$ is over-used here, but this will not cause any confusion. The characteristic property of $L_k$ (for $1 \leq k \leq p-1$) is that $H_n(L_k,\Z)= 0$ unless $n=2k-1$ or $n=0$, and $H_{2k-1}(L_k,\Z)=\Z/p\Z$. See \cite[Chapter 2, Example 2.40]{MR1867354} for more details. We set $L_p := S^{2p-1}$. 

For $1 \leq k \leq p-1$, we denote by $q_k \colon  L_k \to S^{2k}$ the so-called pinch map, which collapses $S^{2k-1}$ (and hence the boundary of $D^{2k}$) to a point. Note that the 2-skeleton of $L$ is just $L_1$. Indeed, $L_1 = S^1 \cup_p D^2$, see \cite[Example 2.43]{MR1867354}.

There is a fibre bundle
$\SU(p-1)\mathop\to\limits^\iota\SU(p)\mathop\to\limits^\pi S^{2p-1},$
where the embedding $\iota$ sends a matrix $A$ to the matrix
$\left(\begin{smallmatrix} 1&0 \\0&A  \end{smallmatrix} \right)$ and the projection $\pi$ sends a matrix to its first
row.
Similarly, the group $\PU(p)$ admits a
fibre bundle
$\SU(p-1) \stackrel{\iota}{\to}\PU(p) \stackrel{\pi}{\to} L,$
where the embedding $\iota$ sends a matrix $A$ to the class of matrices
$\langle \exp(2\pi i/p)\rangle \left(\begin{smallmatrix} 1&0 \\0&A \end{smallmatrix} \right),$ the projection $\pi$ sends a matrix
to its first row (which is defined up to multiplication by $\exp(2\pi i/p)$),
and $L=S^{2p-1}/ \langle \exp(2\pi i/p) \rangle$ is the lens space.

\subsection{Localization at a prime}
 We will freely use the concept of localization of topological spaces (simply connected or with abelian fundamental group) at a prime $p$, see the work of Bousfield-Kan \cite{MR0365573} or Mimura-Nishida-Toda \cite{MR0295347} background and as general references. See also \cite{MR2884233} for a more recent presentation of this material.

Given a topological space $X$ with abelian fundamental group, we denote by $X_{(p)}$ its $p$-localization which comes equipped with a natural map $\iota \colon  X \to X_{(p)}$.
The $p$-localization can be defined as a certain tower of spaces, and its defining properties are
$$\pi_i(X_{(p)},\iota(x)) = \pi_i(X,x) \otimes_{\Z} \Z_{(p)}, \quad \forall x \in X, \forall i \geq 1.$$
Here, $\Z_{(p)}$ denotes the $p$-localization of $\Z$, i.e., the ring of those fractions in $\Q$, whose denominator is not divisible by $p$.
For a continuous map $f \colon  X \to Y$ (between topological spaces with abelian fundamental group), we denote by $f_{(p)} \colon  X_{(p)} \to Y_{(p)}$ the induced map between the $p$-localizations. We will freely use that $\iota \colon X \to X_{(p)}$ induces an isomorphism on cohomology with coefficients in $\Z/p\Z$. A map is called a $p$-local homotopy equivalence if it is a homotopy equivalence after $p$-localization.

If $X$ is a double suspension, then $[X,Y]$ is an abelian group -- here $[X,Y]$ denote as usual the set of homotopy classes of maps from $X$ to $Y$. We will use that if in addition $X$ is also a finite CW-complex, then 
the natural maps
$$[X,Y] \otimes_{\Z} \Z_{(p)} \to [X,Y_{(p)}] \leftarrow [X_{(p)},Y_{(p)}]$$
are isomorphisms, see for example \cite[Chapter 6.6]{MR2884233}.

We will need the following computation of the homotopy groups of $\SU(n)$, which is due to Bott \cite{MR0102803}.

\begin{theorem}[Bott]
$$\pi_{k}(\SU(n)) = \begin{cases} 0 & k=0,1,2 \\
\Z & k=2i-1, \ 2 \leq i \leq n \\
0& k=2i, \ 2 \leq i \leq n-1 \\
\Z/n!\Z & k=2n 
\end{cases}.$$
\end{theorem}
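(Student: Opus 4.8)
The plan is to reduce the whole statement, by a standard stability argument, to the computation of the single unstable group $\pi_{2n}(\SU(n))$, and then to extract the factor $n!$ from the behaviour of the top Chern class on a sphere. First I would feed the long exact homotopy sequences of the principal bundles $\SU(n) \to \SU(n+1) \to S^{2n+1}$ into the usual induction: for $k \leq 2n-1$ both $\pi_k(S^{2n+1})$ and $\pi_{k+1}(S^{2n+1})$ vanish, so $\iota_* \colon \pi_k(\SU(n)) \to \pi_k(\SU(n+1))$ is an isomorphism; iterating, $\pi_k(\SU(n)) \cong \pi_k(\SU)$ for all $k \leq 2n-1$, where $\SU = \SU(\infty)$ is the infinite special unitary group. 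Bott periodicity gives $\pi_k(\SU) = \Z$ for odd $k \geq 3$ and $\pi_k(\SU) = 0$ otherwise (in particular $\pi_0 = \pi_1 = \pi_2 = 0$, so $\SU(n)$ is $2$-connected). This already yields the asserted values of $\pi_k(\SU(n))$ for all $0 \leq k \leq 2n-1$, and the only remaining group is $\pi_{2n}(\SU(n))$.

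For the top degree I would use the bundle $\SU(n) \to \SU(n+1) \stackrel{\pi}{\to} S^{2n+1}$ once more. Since $2n$ and $2n+1$ are both at most $2(n+1)-1$, the stable identification of the previous step gives $\pi_{2n}(\SU(n+1)) = 0$ and $\pi_{2n+1}(\SU(n+1)) = \Z$, so the segment
$$\pi_{2n+1}(\SU(n+1)) \stackrel{\pi_*}{\longrightarrow} \pi_{2n+1}(S^{2n+1}) \stackrel{\partial}{\longrightarrow} \pi_{2n}(\SU(n)) \longrightarrow 0$$
of the long exact sequence identifies $\pi_{2n}(\SU(n))$ with $\Z/\im(\pi_*)$, where $\im(\pi_*)$ is a subgroup of $\pi_{2n+1}(S^{2n+1}) = \Z$. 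Thus everything reduces to the assertion that a generator $\alpha \colon S^{2n+1} \to \SU(n+1)$ of $\pi_{2n+1}(\SU(n+1))$ has $\deg(\pi \circ \alpha) = \pm n!$.

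To compute this degree I would argue in integral cohomology. The Serre spectral sequence of $\SU(n) \to \SU(n+1) \to S^{2n+1}$ collapses, since $\Lambda_{\Z}(x_2,\dots,x_n) \otimes \Lambda_{\Z}(u)$ with $|u| = 2n+1$ has the same Poincar\'e series as $H^*(\SU(n+1),\Z)$ and everything is torsion free; comparing the edge homomorphism with the restriction $\iota^* \colon H^{2n+1}(\SU(n+1),\Z) \to H^{2n+1}(\SU(n),\Z)$, whose kernel in this degree is exactly $\Z\, x_{n+1}$, gives $\pi^*(u_{2n+1}) = \pm\, x_{n+1}$ for a generator $u_{2n+1}$ of $H^{2n+1}(S^{2n+1},\Z)$. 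Hence $\deg(\pi \circ \alpha) \cdot u_{2n+1} = \alpha^* \pi^*(u_{2n+1}) = \pm\, \alpha^*(x_{n+1})$, and it suffices to see $\alpha^*(x_{n+1}) = \pm\, n!\cdot u_{2n+1}$. For this I would pass to the adjoint $\beta \colon S^{2n+2} \to B\SU(n+1)$, which classifies a rank-$(n+1)$ bundle $\xi$ representing a generator of $\widetilde{K}^{0}(S^{2n+2})$ under the isomorphism $\pi_{2n+1}(\SU(n+1)) \cong \pi_{2n+2}(B\SU(n+1))$ followed by stabilisation. Since the universal class $c_{n+1}$ transgresses to $x_{n+1}$ in the path--loop fibration of $B\SU(n+1)$, naturality of transgression gives $\beta^*(c_{n+1}) = \pm\, \alpha^*(x_{n+1})$ under the suspension isomorphism; and since the Chern character is an integral isomorphism on even spheres while Newton's identity yields $\mathrm{ch}_{n+1} = \tfrac{(-1)^n}{n!}\, c_{n+1}$ modulo decomposable classes --- which vanish on $S^{2n+2}$ --- the top Chern number of $\xi$ is $c_{n+1}(\xi) = \pm\, n!$ times a generator. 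Tracing this back, $\deg(\pi \circ \alpha) = \pm n!$, hence $\pi_{2n}(\SU(n)) \cong \Z/n!\,\Z$; as a consistency check, for $n = 2$ this recovers $\pi_4(\SU(2)) = \pi_4(S^3) = \Z/2!\,\Z$.

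I expect the last paragraph to be the main obstacle, and within it the integral bookkeeping: identifying the generator of $\pi_{2n+1}(\SU(n+1))$ with the Bott generator of $\widetilde{K}^{0}(S^{2n+2})$, keeping track of the suspension and transgression isomorphisms together with their signs, and invoking the Bott-periodicity input that the Chern character realises an integral isomorphism $\widetilde{K}^{0}(S^{2n+2}) \cong H^{2n+2}(S^{2n+2},\Z)$ --- this last point is precisely what forces the divisor to be $n!$ rather than a proper divisor of it, through the denominator of $\mathrm{ch}_{n+1}$. The collapse of the Serre spectral sequence and the elementary stability arguments are routine by comparison.
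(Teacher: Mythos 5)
Your argument is correct, and it is worth noting that the paper offers no proof of this statement at all: it is quoted as a classical theorem with a citation to Bott's paper \emph{The space of loops on a Lie group}, whose original argument is Morse-theoretic (applied to the based loop space of $\SU(n)$). What you have written is the now-standard alternative derivation: stability along the fibrations $\SU(n)\to\SU(n+1)\to S^{2n+1}$ plus Bott periodicity for the range $k\le 2n-1$, and then the identification of $\pi_{2n}(\SU(n))$ with $\Z/\deg(\pi\circ\alpha)$, with the degree $\pm n!$ extracted from the Bott integrality of the Chern character on $S^{2n+2}$ together with the Newton-identity computation $\mathrm{ch}_{n+1}\equiv \frac{(-1)^n}{n!}c_{n+1}$ modulo decomposables. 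The bookkeeping you flag as delicate is indeed where the content sits, but each step is standard: $\ker\iota^*=\mathrm{im}\,\pi^*=\Z x_{n+1}$ in degree $2n+1$ follows from the collapse of the Serre spectral sequence, the clutching/transgression comparison $\beta^*(c_{n+1})\leftrightarrow\alpha^*(x_{n+1})$ is Borel's transgression theorem plus naturality, and the identification of the generator of $\pi_{2n+1}(\SU(n+1))$ with the Bott generator of $\widetilde K^0(S^{2n+2})$ uses exactly the stable isomorphism from your first paragraph, so there is no circularity (the integrality of $\mathrm{ch}$ on spheres rests only on Bott periodicity, not on the unstable computation being proved). The trade-off between the two routes is the usual one: Bott's original proof is self-contained within Morse theory on loop spaces, while yours outsources the hard analysis to two black boxes (periodicity and $\mathrm{ch}$-integrality) in exchange for a short cohomological computation; for the purposes of this paper either suffices, since only the statement is used.
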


Let $\epsilon_k \colon  S^{2k-1} \to \SU(n)$ be a generator of $\pi_{2k-1}(\SU(n))$ for $2 \leq k \leq n$ and consider the map $\mu \colon  \prod_{k=2}^n S^{2k-1} \to \SU(n)$ given by $
\mu(x_2,\dots,x_n) := \epsilon_2(x_2) \cdots \epsilon_n(x_n),$ where we use multiplication in the group $\SU(n)$. The following theorem was first proved by Serre \cite[Proposition 7]{MR0059548} -- even though without using the language of localization at the level of topological spaces.

\begin{theorem}[Serre] \label{serre} Let $p$ be a prime number. If $p \geq n$, then the map $$\mu \colon  \prod_{k=2}^n S^{2k-1} \to \SU(n)$$ is a $p$-local homotopy equivalence.
\end{theorem}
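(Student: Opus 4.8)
The plan is to prove that $\mu$ induces an isomorphism on mod-$p$ cohomology; this is the localised form of Serre's original argument. Since both $\prod_{k=2}^{n}S^{2k-1}$ and $\SU(n)$ are simply connected finite $CW$-complexes with torsion-free integral homology, this suffices: for such spaces $H_{*}(-,\Z_{(p)})$ is finitely generated and free in each degree, so a map inducing an isomorphism on $H^{*}(-,\Z/p\Z)$ is, in each degree, a homomorphism of finitely generated free $\Z_{(p)}$-modules of equal rank which is invertible modulo $p$, hence an isomorphism; the $p$-local Whitehead theorem for simply connected spaces of finite type then gives that $\mu_{(p)}$ is a homotopy equivalence, see \cite{MR0295347} and \cite{MR2884233}.

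By Borel's theorem recalled above, $H^{*}(\SU(n),\Z/p\Z)=\Lambda^{*}_{\Z/p\Z}(x_{2},\dots,x_{n})$ with $|x_{i}|=2i-1$, and every $x_{i}$ is primitive; by the Künneth theorem, $H^{*}\big(\prod_{k=2}^{n}S^{2k-1},\Z/p\Z\big)=\Lambda^{*}_{\Z/p\Z}(a_{2},\dots,a_{n})$, where $a_{i}$ denotes the fundamental class of the $i$-th factor, $|a_{i}|=2i-1$. Now $\mu$ is the composite of $\epsilon_{2}\times\cdots\times\epsilon_{n}$ with the iterated multiplication map $\SU(n)^{\times(n-1)}\to\SU(n)$; on cohomology the latter induces the iterated coproduct, which on the primitive class $x_{i}$ equals $\sum_{j}1\otimes\cdots\otimes x_{i}\otimes\cdots\otimes1$ with no decomposable correction terms. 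Pulling back along $\epsilon_{2}\times\cdots\times\epsilon_{n}$ and using that $\epsilon_{j}^{*}(x_{i})\in H^{2i-1}(S^{2j-1},\Z/p\Z)$ vanishes unless $j=i$, only the $j=i$ slot contributes, so $\mu^{*}(x_{i})=d_{i}\cdot a_{i}$ for the scalar $d_{i}\in\Z/p\Z$ defined by $\epsilon_{i}^{*}(x_{i})=d_{i}\,a_{i}$. Consequently $\mu^{*}$ is an isomorphism if and only if each $d_{i}$ is a unit in $\Z/p\Z$, and it remains to verify this.

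The scalar $d_{i}$ is the reduction modulo $p$ of the integer $\langle\tilde{x}_{i},h(\epsilon_{i})\rangle$, the Kronecker pairing of an integral lift $\tilde{x}_{i}$ of $x_{i}$ -- which exists since $H^{*}(\SU(n),\Z)$ is torsion-free -- with the Hurewicz image of the chosen generator $\epsilon_{i}$ of $\pi_{2i-1}(\SU(n))\cong\Z$. The inclusion $\SU(i)\hookrightarrow\SU(n)$ induces an isomorphism on $\pi_{2i-1}$ and is compatible with the generators $x_{i}$, so one may assume $n=i$. In the fibre bundle $\SU(i-1)\to\SU(i)\xrightarrow{\pi}S^{2i-1}$, the pullback $\pi^{*}$ of a generator $\tilde{\sigma}$ of $H^{2i-1}(S^{2i-1},\Z)$ equals $\tilde{x}_{i}$ modulo decomposable classes; since the Hurewicz image $h(\epsilon_{i})$ is primitive it pairs trivially with decomposable cohomology classes, whence $\langle\tilde{x}_{i},h(\epsilon_{i})\rangle=\langle\tilde{\sigma},\pi_{*}h(\epsilon_{i})\rangle=\deg(\pi\circ\epsilon_{i})$. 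Finally, since $\pi_{2i-2}(\SU(i))=0$, the long exact homotopy sequence of the bundle together with Bott's theorem shows that the connecting homomorphism $\pi_{2i-1}(S^{2i-1})=\Z\to\pi_{2i-2}(\SU(i-1))=\Z/(i-1)!\,\Z$ is surjective, so $\pi_{*}$ has image $(i-1)!\,\Z$ and $\deg(\pi\circ\epsilon_{i})=\pm(i-1)!$. As $p\geq n\geq i$, the integer $(i-1)!$ is prime to $p$, so $d_{i}$ is a unit and $\mu^{*}$ is an isomorphism. The main obstacle is precisely this last step, the identification $\deg(\pi\circ\epsilon_{i})=\pm(i-1)!$, which rests on the computation of the unstable homotopy group $\pi_{2i-1}(\SU(i))$ and on understanding how its generator projects to the base sphere; everything else is formal once the theorems of Borel and Bott are in place.
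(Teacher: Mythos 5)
The paper does not prove this statement; it is quoted from Serre (Proposition 7 of \cite{MR0059548}), so there is no in-paper argument to compare against. Your proof is a correct and complete rendering of the standard argument: you reduce to an isomorphism on mod-$p$ cohomology via the $p$-local Whitehead theorem, use primitivity of the $x_i$ together with the degree constraint $\epsilon_j^*(x_i)=0$ for $j\neq i$ to get $\mu^*(x_i)=d_i a_i$, and correctly identify $d_i=\pm(i-1)!$ from the connecting homomorphism $\pi_{2i-1}(S^{2i-1})\to\pi_{2i-2}(\SU(i-1))\cong\Z/(i-1)!\,\Z$ in the fibration $\SU(i-1)\to\SU(i)\to S^{2i-1}$, which is a unit mod $p$ exactly because $p\geq n\geq i$.
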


We will now concentrate on the case that $n=p$.
For $2 \leq i \leq p$, we denote by $\lambda_i \colon  \SU(p)_{(p)} \to S^{2i-1}_{(p)}$ the composition of the homotopy inverse of $\mu_{(p)}$ with the projection onto $S^{2i-1}_{(p)}$.
We also have the following computation of the $p$-local homotopy groups of odd spheres, which is also due to Serre  \cite[Proposition 11]{MR0059548}:
\begin{equation}
\label{homserre}
\pi_{k}\left(S^{2i-1}_{(p)}\right) = \begin{cases} 
0 & 0 \leq k < 2i-1 \\
\Z_{(p)} & k=2i-1 \\
0 & 2i-1<k< 2i+2p-4\\
\Z/p\Z & k= 2i+2p-4\\
0 & 2i+2p-4 < k< 2i +4p -7.
\end{cases}
\end{equation}
Here, the generator in $\pi_{2i+2p-4}(S^{2i-1}_{(p)})$ is equal to  $\Sigma^{2i-4}(\alpha)$ for some generator $\alpha \in \pi_{2p}(S^3_{(p)})$. Here, $\Sigma(?)$ denotes as usual the suspension also on the level of maps. Hence, Theorem \ref{serre} together with the computation above implies that there is a more refined computation of the homotopy groups of $\SU(p)$ localized at a prime $p$:

$$\pi_k\left(\SU(p)_{(p)}\right) =\begin{cases} 0 & k=0,1,2 \\
\Z_{(p)} & k=2i-1, \ 2 \leq i \leq p \\
0 & k=2i, \ 2 \leq i \leq p-1 \\
0 & k=2i+1, \ p \leq i < 2p-1 \\
\Z/p\Z & k=2i,\ p \leq i < 2p-1. \end{cases}$$

Note that this covers all dimensions up to $4p-4$, whereas Bott's computation only gives information up to dimension $2p$ -- a fact that will be used later.

Now, the commutator map on $\SU(p)$ induces a secondary operation $$\langle.,.\rangle \colon  \pi_i(\SU(p)) \times \pi_j(\SU(p)) \to \pi_{i+j}(\SU(p)),$$ the so-called Samelson product, which was originally introduced in \cite{MR0065157}. Bott already analyzed the Samelson products of the maps $\epsilon_k$ in \cite{MR0123330}. He proved as a corollary to his main result \cite[Theorem 1]{MR0123330} that the element $\langle \epsilon_i, \epsilon_{p-i+1} \rangle$ in $\pi_{2p}(\SU(p)) = \Z/p!\Z$ is divisible by precisely $\frac{p!}{(i-1)!(p-i)!}$, i.e., it is equal to $(i-1)!(p-i)!$ times some generator of $\Z/p!\Z$.
The maps $\epsilon_k$ induce natural maps $\bar \epsilon_k \colon  S^{2k-1} \to \PU(p)$ for $2 \leq k \leq p$. Note, that we can also choose a natural map $\bar \epsilon_1 \colon  S^1 \to \PU(p)$ which yields a generator of $\pi_1(\PU(p))=\Z/p\Z$ -- and that Bott's result extends to the case $i=1$. In the light of our computation of $\pi_k(\SU(p)_{(p)})$ from above, Bott's computation of the Samuelson products implies:

\begin{theorem}[Bott] \label{bott}
Let $p$ be a prime number and $1 \leq i < p$.
The element $$\langle \bar\epsilon_p,\bar \epsilon_{i} \rangle \in \pi_{2p+2i-2}(\SU(p)_{(p)}) = \Z/p\Z$$ does not vanish.
\end{theorem}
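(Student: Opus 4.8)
The plan is to deduce the statement from Bott's computation of Samelson products in the unitary groups \cite{MR0123330}, read through the $p$-local splitting $\mu \colon \prod_{k=2}^{p} S^{2k-1}_{(p)} \xrightarrow{\;\simeq\;} \SU(p)_{(p)}$ of Theorem~\ref{serre}. Two things have to be done: first, rewrite $\langle \bar\epsilon_p,\bar\epsilon_i\rangle$ in terms of an ordinary Samelson product inside $\SU(p)$; second, locate the target group $\pi_{2p+2i-2}(\SU(p)_{(p)}) \cong \Z/p\Z$ among the factors $S^{2k-1}_{(p)}$ and check that Bott's formula makes the class a generator there.

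\emph{Reduction to $\SU(p)$.} Recall that the Samelson product of $f \colon S^a \to G$ and $g \colon S^b \to G$ is the composite $S^{a+b} = S^a \wedge S^b \xrightarrow{f\wedge g} G \wedge G \to G$, where the commutator $(x,y)\mapsto xyx^{-1}y^{-1}$ is null on $G \vee G$ and hence descends to $G\wedge G$. The commutator map $\tilde c \colon \SU(p) \times \SU(p) \to \SU(p)$ is invariant, in each variable, under multiplication by the centre, so it factors as $\tilde c = c \circ (\pi \times \pi)$ with $c \colon \PU(p)\times\PU(p) \to \SU(p)$ the map of the excerpt. Since $\bar\epsilon_k = \pi \circ \epsilon_k$ for $2 \le k \le p$, passing to smash products gives, for $2 \le i \le p-1$,
\[
\langle \bar\epsilon_p,\bar\epsilon_i\rangle \;=\; \langle \epsilon_p,\epsilon_i\rangle \;\in\; \pi_{2p+2i-2}(\SU(p)).
\]
For $i=1$ the argument has to be run one step earlier, since $\bar\epsilon_1 \colon S^1 \to \PU(p)$ does not lift to $\SU(p)$; but one still has $\langle \bar\epsilon_p,\bar\epsilon_1\rangle = \pm\langle \bar\epsilon_1,\bar\epsilon_p\rangle$ by graded anti-symmetry of the Samelson product, and this is precisely the border case $i=1$ of the corollary of Bott quoted just before the statement.

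\emph{Localizing and applying Bott.} Localize at $p$. By Theorem~\ref{serre} we have $\pi_*(\SU(p)_{(p)}) \cong \bigoplus_{k=2}^{p}\pi_*(S^{2k-1}_{(p)})$, and the computation \eqref{homserre} of the $p$-local homotopy of odd spheres shows that in degree $2p+2i-2$ only the summand coming from $S^{2i+1}_{(p)}$ survives, contributing its first $p$-torsion group $\Z/p\Z$; equivalently the isomorphism $\pi_{2p+2i-2}(\SU(p)_{(p)}) \cong \Z/p\Z$ is realised by the projection $\lambda_{i+1} \colon \SU(p)_{(p)} \to S^{2i+1}_{(p)}$. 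So it suffices to show that the $\lambda_{i+1}$-image of $\langle\epsilon_p,\epsilon_i\rangle$ does not vanish after localization. This is the content of Bott's Theorem~1 in \cite{MR0123330}: there $\langle\epsilon_a,\epsilon_b\rangle$ is identified with an explicit integral multiple of a generator of the relevant cyclic homotopy group, the multiple being a ratio of factorials. In the range we need -- $a=p$ and $1 \le b=i < p$, so that every integer entering those factorials is $<p$ -- this multiple is a unit modulo $p$ (the quoted corollary is the instance $a+b=p+1$, where $\langle\epsilon_i,\epsilon_{p-i+1}\rangle = (i-1)!\,(p-i)!$ times a generator of $\Z/p!\Z$ and $(i-1)!\,(p-i)!$ is coprime to $p$). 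Hence $\langle\epsilon_p,\epsilon_i\rangle$, and therefore $\langle\bar\epsilon_p,\bar\epsilon_i\rangle$, localizes to a generator of $\Z/p\Z$; for $i=1$ the non-vanishing comes from the border case of the quoted corollary together with the anti-symmetry noted above.

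\emph{Main obstacle.} The $p$-local bookkeeping in the first two steps is routine. The genuine work is importing Bott's result cleanly: one has to match normalizations -- the sign convention for the Samelson product, and a choice of generator of $\pi_{2p+2i-2}(\SU(p))$ compatible with $\lambda_{i+1}$ (possibly by pushing forward along $\SU(p)\hookrightarrow\SU(n)$ for $n\ge p+i-1$, where the relevant generating sphere is present, and pulling back) -- and, above all, to read off from \cite{MR0123330} that the factorial coefficient attached to $\langle\epsilon_p,\epsilon_i\rangle$ for $1\le i<p$ has $p$-adic valuation strictly below that of the order of its target group, i.e. that this Samelson product is not $p$-divisible. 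A secondary nuisance is the case $i=1$, where $\bar\epsilon_1$ has no lift to $\SU(p)$ and one must work with $c$ on $\PU(p)\times\PU(p)$ directly, relying on the fact that Bott's analysis covers this border case.
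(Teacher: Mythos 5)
You take the same route as the paper --- everything reduces to Bott's factorial formula for the Samelson products $\langle\epsilon_a,\epsilon_b\rangle$ together with the observation that $(p-1)!\,(i-1)!$ is a unit mod $p$ --- and your outline is correct, but the step you relegate to ``matching normalizations'' is in fact the entire proof, and your main argument as written skips it. Bott's theorem does not compute $\langle\epsilon_p,\epsilon_i\rangle$ inside $\pi_{2p+2i-2}(\SU(p))$: it computes $\langle\epsilon_a,\epsilon_b\rangle$ in $\pi_{2n}(\SU(n))\cong\Z/n!\,\Z$ only in the critical dimension $a+b=n+1$, so for $a=p$, $b=i$ the relevant group is $\SU(p+i-1)$, not $\SU(p)$; the two agree only when $i=1$. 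The correct assembly --- which is exactly the paper's two-line proof --- is: by naturality of the Samelson product under $\SU(p)\hookrightarrow\SU(p+i-1)$, the image of $\langle\epsilon_p,\epsilon_i\rangle$ in $\pi_{2p+2i-2}(\SU(p+i-1))=\Z/(p+i-1)!\,\Z$ is $(p-1)!\,(i-1)!$ times a generator; since $p\le p+i-1<2p$, the order $(p+i-1)!$ is divisible by $p$ exactly once while $(p-1)!\,(i-1)!$ is prime to $p$, so this image is not $p$-divisible, and a class of $\pi_{2p+2i-2}(\SU(p))$ whose pushforward is not $p$-divisible cannot vanish in $\pi_{2p+2i-2}(\SU(p))\otimes_{\Z}\Z_{(p)}$. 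No ``pulling back'' is needed or possible here; one only uses that a class with nonzero image is nonzero. Two smaller points: your identification of the border case is inverted --- $i=1$ is precisely the value for which the corollary quoted before the theorem applies in $\SU(p)$ itself (there $a+b=p+1$), and the failure of $\bar\epsilon_1$ to lift is harmless because the Samelson product in the statement is taken with respect to $c\colon\PU(p)\times\PU(p)\to\SU(p)$ throughout, so no lift is ever required; it is the cases $i\ge 2$ that force the passage to the larger group. Finally, the detour through $\lambda_{i+1}$ and the splitting of Theorem~\ref{serre} is correct but unnecessary for this statement, since non-vanishing can be read off directly in $\pi_{2p+2i-2}(\SU(p)_{(p)})\cong\Z/p\Z$.
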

\begin{proof}
Indeed, Bott's result from above yields that the image of the map $$\langle \bar\epsilon_p, \bar\epsilon_i \rangle \colon  S^{2p+2i-2} \to \SU(p+i-1)$$ in $\pi_{2p+2i-2}(\SU(p+i-1)) = \Z/(p+i-1)!\Z$ is $(p-1)!(i-1)!$ times some generator, and hence does not vanish modulo $p$. Since $\langle \bar\epsilon_p, \bar\epsilon_i \rangle$ factors through $\SU(p)$, the assertion follows.
\end{proof}

The non-vanishing of these Samuelson products modulo $p$ will be the key to understand the non-vanishing of certain cohomology classes after application of the commutator map.

\subsection{The work of Kishimoto-Kono}

In order to understand the effect of the commutator map on the cohomology of ${\rm PU}(p)$, we must now study the $p$-local homotopy type of ${\rm PU}(p)$. We restate Proposition 2 from the work of Kishimoto-Kono \cite{MR2544124}.

\begin{lemma}
There exists a natural map $\eta \colon  L_{(p)} \to \PU(p)_{(p)}$, such that the diagram
\begin{equation} \label{factorization}
\xymatrix{ S^{2p-1}_{(p)} \ar[r]^{\epsilon_{p(p)}} \ar[d]^{\rho_{(p)}} & \SU(p)_{(p)} \ar[d] \\
L_{(p)} \ar[r]^{\eta} & \PU(p)_{(p)}
}
\end{equation}
commutes up to homotopy.
\end{lemma}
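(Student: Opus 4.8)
The plan is to realise $\eta$ as a $p$-local section of the fibre bundle $\SU(p-1)\xrightarrow{\ \iota\ }\PU(p)\xrightarrow{\ \pi\ }L$ and then to read off the square \eqref{factorization} from the homotopy groups in degree $2p-1$. The first thing to note is that this bundle is the quotient of the Lie group $\PU(p)$ by the subgroup $\iota(\SU(p-1))$, hence is a \emph{principal} $\SU(p-1)$-bundle, and that pulling it back along $\rho\colon S^{2p-1}\to L$ yields the bundle $\SU(p-1)\xrightarrow{\ \iota\ }\SU(p)\to S^{2p-1}$: the quotient map $q\colon\SU(p)\to\PU(p)$ restricts to an isomorphism on each fibre $\SU(p-1)$ (the centre of $\SU(p)$ meets $\iota(\SU(p-1))$ trivially), and so induces a fibrewise isomorphism $\SU(p)\xrightarrow{\ \cong\ }\rho^{*}\PU(p)$ over $S^{2p-1}$.

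The heart of the argument is that this principal bundle becomes trivial after $p$-localisation. Let $\gamma$ be the tautological complex line bundle over $L=S^{2p-1}/\langle\exp(2\pi i/p)\rangle$, so that $x:=c_{1}(\gamma)$ generates $H^{2}(L,\Z)\cong\Z/p\Z$, its power $x^{j}$ generates $H^{2j}(L,\Z)\cong\Z/p\Z$ for $1\le j\le p-1$, and $x^{p}=0$. Let $V:=\PU(p)\times_{\SU(p-1)}\C^{p-1}$ be the vector bundle associated with the standard representation of $\SU(p-1)$. I would first check that $V\oplus\underline{\C}\cong\gamma^{\oplus p}$: writing $\C^{p}=\C\oplus\C^{p-1}$ as a representation of $\SU(p-1)\subset\SU(p)$ one has $V\oplus\underline{\C}=\PU(p)\times_{\SU(p-1)}\C^{p}=(\SU(p)\times_{\SU(p-1)}\C^{p})/Z(\SU(p))$, while the canonical trivialisation $\SU(p)\times_{\SU(p-1)}\C^{p}\xrightarrow{\ \cong\ }S^{2p-1}\times\C^{p}$, $[g,w]\mapsto(ge_{1},gw)$, carries the residual $Z(\SU(p))$-action (left translation on the $\SU(p)$-factor) to diagonal scalar multiplication by $\exp(2\pi i/p)$ on $S^{2p-1}\times\C^{p}$; dividing out gives $\gamma^{\oplus p}$. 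Hence $c(V)=c(\gamma)^{p}=(1+x)^{p}$, and since $\binom{p}{j}\equiv 0\pmod p$ for $0<j<p$ and $x^{p}=0$, this forces $c(V)=1$, that is, all Chern classes of $V$ vanish. Next, through dimension $2p-1=\dim L$, Bott's computation shows that $B\SU(p-1)_{(p)}$ is a product $\prod_{k=2}^{p-1}K(\Z_{(p)},2k)$: the homotopy groups $\pi_{2k}(B\SU(p-1))=\pi_{2k-1}(\SU(p-1))=\Z$ for $2\le k\le p-1$, the intermediate ones vanish or are of order prime to $p$, and the potential Postnikov $k$-invariants would lie in odd-degree cohomology of products of even Eilenberg--MacLane spaces, hence in degrees $\ge 2p+3$, and so do not intervene. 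Therefore $[L_{(p)},B\SU(p-1)_{(p)}]\cong\prod_{k=2}^{p-1}H^{2k}(L,\Z_{(p)})$, the class of the classifying map of our bundle is given by the Chern classes $(c_{2}(V),\dots,c_{p-1}(V))=0$, and $\PU(p)_{(p)}\to L_{(p)}$ is trivial; fix a section $\eta_{0}\colon L_{(p)}\to\PU(p)_{(p)}$.

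It remains to produce $\eta$ and verify \eqref{factorization}. By Serre's Theorem \ref{serre}, $\SU(p-1)_{(p)}\simeq\prod_{k=2}^{p-1}S^{2k-1}_{(p)}$, and \eqref{homserre} gives $\pi_{2p-1}(\SU(p-1)_{(p)})=\pi_{2p-2}(\SU(p-1)_{(p)})=0$; the homotopy exact sequence of the fibration then shows that $\pi_{*}\colon\pi_{2p-1}(\PU(p)_{(p)})\to\pi_{2p-1}(L_{(p)})$ is an isomorphism of groups, both isomorphic to $\Z_{(p)}$. Since $\pi\circ\eta_{0}\simeq\id_{L_{(p)}}$, the class $[\eta_{0}\circ\rho_{(p)}]$ maps under $\pi_{*}$ to $[\rho_{(p)}]$, which generates $\pi_{2p-1}(L_{(p)})$ because $\rho$ is the universal cover; hence $[\eta_{0}\circ\rho_{(p)}]$ is a generator of $\pi_{2p-1}(\PU(p)_{(p)})\cong\Z_{(p)}$. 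In particular the homomorphism $\rho_{(p)}^{*}\colon[L_{(p)},\PU(p)_{(p)}]\to\pi_{2p-1}(\PU(p)_{(p)})$ (defined since $\PU(p)_{(p)}$ is an $H$-space) hits a unit, hence is surjective. Therefore the class of the composite $S^{2p-1}_{(p)}\xrightarrow{\ \epsilon_{p(p)}\ }\SU(p)_{(p)}\xrightarrow{\ q_{(p)}\ }\PU(p)_{(p)}$ lies in its image: there exists $\eta\colon L_{(p)}\to\PU(p)_{(p)}$ with $\eta\circ\rho_{(p)}\simeq q_{(p)}\circ\epsilon_{p(p)}$, which is precisely the homotopy-commutativity of \eqref{factorization}.

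The main obstacle is the middle step: the $p$-local triviality of the principal bundle $\PU(p)\to L$. Its essential content is the Chern-class identity $c(V)=(1+x)^{p}=1$, which rests on the splitting $V\oplus\underline{\C}\cong\gamma^{\oplus p}$ and on the divisibility $p\mid\binom{p}{j}$ for $0<j<p$; the accompanying fact that $B\SU(p-1)$ carries no obstructing $k$-invariants below dimension $2p$ after $p$-localisation is more routine but genuinely needed, and it is exactly here that Serre's $p$-local computations of homotopy groups — together with the coupling between the prime $p$ and the dimension $2p-1$ of $L$ — enter the picture.
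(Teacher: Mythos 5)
The paper does not actually prove this lemma: it is imported verbatim as Proposition~2 of Kishimoto--Kono \cite{MR2544124}, so you are supplying an argument where the authors supply only a citation. Your argument is essentially correct and self-contained. The identification $V\oplus\underline{\C}\cong\gamma^{\oplus p}$ and the consequent computation $c(V)=(1+x)^{p}=1$ (using $p\mid\binom{p}{j}$ for $0<j<p$, the $p$-torsion of $H^{2j}(L;\Z)$, and $\dim L=2p-1$) are right, as is the splitting of $B\SU(p-1)_{(p)}$ as $\prod_{k=2}^{p-1}K(\Z_{(p)},2k)$ through dimension $2p-1$: the group $\pi_{2p-1}(B\SU(p-1))=\pi_{2p-2}(\SU(p-1))=\Z/(p-1)!\,\Z$ is prime to $p$, and the first odd-degree $p$-local cohomology of $K(\Z_{(p)},2k)$ sits in degree $2k+2p-1\geq 2p+3$, so no $k$-invariant interferes. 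Two points deserve to be made explicit. First, you silently use that localization carries the fibration sequence $\PU(p)\to L\to B\SU(p-1)$ to a fibration sequence, so that nullity of the localized classifying map really does produce the section $\eta_{0}$ of $\PU(p)_{(p)}\to L_{(p)}$; this is standard for nilpotent fibrations (the $\pi_1(L)$-action on the fibre is trivial because $\SU(p-1)$ is connected), but it is the hinge of the construction. Second, the inference ``hits a unit, hence is surjective'' is not formally valid for an arbitrary group homomorphism into $\Z_{(p)}$ (the image of $\Z\hookrightarrow\Z_{(p)}$ contains a unit but is proper). What saves you is that $[L_{(p)},\PU(p)_{(p)}]$ is $q$-divisible for every prime $q\neq p$, because the $q$-th power map $\mu_{q}$ of the $p$-local $H$-space $\PU(p)_{(p)}$ induces multiplication by $q$ on all homotopy groups and is therefore a homotopy equivalence; a $q$-divisible subgroup of $\Z_{(p)}$ containing a unit is everything. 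More concretely, writing $[q_{(p)}\circ\epsilon_{p(p)}]=\tfrac{a}{b}\,[\eta_{0}\circ\rho_{(p)}]$ with $a,b$ prime to $p$, the map $\eta:=\mu_{a}\circ\mu_{b}^{-1}\circ\eta_{0}$ does the job. With that one-line repair the proof is complete.
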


Using the notation introduced in Section \ref{lens}, we are now ready to state and prove an extension of Lemma 4 of Kishimoto-Kono \cite{MR2544124}.

\begin{lemma} \label{lem:crucial}
For $1 \leq i \leq p-1$, we have $p$-locally
$$\lambda_{i+1} \circ c_{(p)} \circ  (\eta|_{L_{1(p)}} \wedge \bar\varepsilon_{i(p)})= a \cdot(q_1 \wedge 1_{S^{2i-1}})_{(p)} \colon  L_{1(p)} \wedge S_{(p)}^{2i-1} \to S_{(p)}^{2i+1}$$
for some $a \in \Z^{\times}_{(p)}$.
\end{lemma}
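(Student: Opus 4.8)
The plan is to compute both sides of the claimed identity on mod-$p$ cohomology and then invoke a Hurewicz/suspension argument to conclude they are homotopic. Since $L_{1(p)} \wedge S^{2i-1}_{(p)}$ is (after $p$-localization) a two-cell complex with cells in dimensions $2i$ and $2i+1$ — namely a suspension of the mod-$p$ Moore space $L_1 \wedge S^{2i-1}$, which is itself a double suspension for $i \geq 1$ — the homotopy set $[L_{1(p)} \wedge S^{2i-1}_{(p)}, S^{2i+1}_{(p)}]$ is an abelian group, and by the material recalled in Section \ref{lens} it maps isomorphically from $[L_1 \wedge S^{2i-1}, S^{2i+1}] \otimes_{\Z} \Z_{(p)}$. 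The target sphere $S^{2i+1}_{(p)}$ has $\pi_{2i}=\pi_{2i+1}(S^{2i+1}_{(p)})=0$ except for $\pi_{2i+1}=\Z_{(p)}$ (using \eqref{homserre}, since $2p-1 > 1$), so a map out of a complex with cells only in dimensions $2i$ and $2i+1$ is detected entirely by its effect on $H^{2i+1}(-,\Z/p\Z)$: indeed the relevant homotopy group is cyclic, generated by the pinch-to-top-sphere map, and this group is $\Z/p\Z$ exactly because the Moore space has a $\Z/p\Z$ in homology. So it suffices to show both sides induce the \emph{same} nonzero map on the top cohomology class $x \in H^{2i+1}(S^{2i+1}_{(p)},\Z/p\Z)$, i.e. that the left-hand side is nonzero, since the right-hand side $a\cdot(q_1 \wedge 1)_{(p)}$ with $a$ a unit is visibly a generator.

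Next I would unwind the left-hand side on cohomology using the ingredients already assembled. The class $\lambda_{i+1}^*(x) \in H^{2i+1}(\SU(p)_{(p)},\Z/p\Z)$ is, by Serre's splitting (Theorem \ref{serre}) and the definition of $\lambda_{i+1}$, the generator $x_{i+1}$ of the exterior algebra $H^*(\SU(p),\Z/p\Z)$ (identified via $\iota^*$). So I must compute the image of $x_{i+1}$ under $(\eta|_{L_1} \wedge \bar\varepsilon_i)^* \circ c^*$. For $i+1 \leq p-1$ the class $x_{i+1} = \pi^*(y_{i+1})$ pulls back from $\PU(p)$, so Lemma \ref{comm} applies directly: $c^*(y_{i+1}) \equiv i\cdot(y_i \otimes y - y \otimes y_i)$ modulo $\mathcal I(2)^3$. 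Then one pulls this back along $\eta|_{L_{1(p)}} \times \bar\varepsilon_{i(p)}$: the factorization diagram \eqref{factorization} identifies $\eta^*$ restricted to the $2$-skeleton $L_1$, and on $H^2$ the class $y$ pulls back to the generator of $H^2(L_{1(p)},\Z/p\Z)$ — this is exactly the $q_1^*$ of a generator of $H^2(S^2)$, which is why the pinch map $q_1$ appears. Meanwhile $\bar\varepsilon_i^*(y_i)$ is the generator of $H^{2i-1}(S^{2i-1},\Z/p\Z)$ since $\bar\varepsilon_i$ represents a generator of $\pi_{2i-1}$ and $\pi^*(y_i)=x_i$ is dual to it. The cross terms $y_i \otimes y$ and $y \otimes y_i$: after smashing (which kills decomposables and the basepoint components), exactly one of the two survives in $H^{2i+1}$ of the smash product $L_{1(p)} \wedge S^{2i-1}_{(p)}$, and it maps to the top class with coefficient $i$ times a unit, which is a unit mod $p$ precisely because $1 \leq i \leq p-1$. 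For the remaining case $i = p-1$ — where $x_{i+1} = x_p$ does \emph{not} pull back from $\PU(p)$ and Lemma \ref{comm} is unavailable — I would instead use Bott's nonvanishing of the Samelson product $\langle \bar\epsilon_p, \bar\epsilon_{p-1}\rangle$ (Theorem \ref{bott}) together with the Kishimoto-Kono factorization $\epsilon_{p(p)} = \eta \circ \rho_{(p)}$: the Samelson product is represented by $c_{(p)} \circ (\epsilon_{p(p)} \wedge \bar\epsilon_{(p-1)(p)})$, and composing with $\lambda_p$ detects its image in $\pi_*(S^{2p-1}_{(p)})$, which by Theorem \ref{bott} is nonzero; restricting $\eta$ to the $2$-skeleton $L_1$ and using that the pinch map $q_1$ generates the relevant $\pi$, one reads off that the composite equals a unit times $(q_1 \wedge 1)_{(p)}$.

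The main obstacle, as anticipated in the paragraph preceding the lemma in the excerpt, is the case $i=p-1$: here one genuinely leaves the range where the cohomology of $\PU(p)$ sees everything, and must substitute a homotopy-theoretic input (Bott's Samelson product computation, rephrased $p$-locally as Theorem \ref{bott}) for the cohomological computation. Making the bookkeeping precise requires care in two places: first, in tracking how the restriction $\eta|_{L_{1(p)}}$ interacts with the full map $\eta$ under the commutator — one needs that restricting to the $2$-skeleton before or after applying $c_{(p)}$ gives homotopic results up to the indeterminacy, which follows from naturality of $c$ and the cellular structure; and second, in matching normalizations so that the ambiguous unit $a$ absorbs all the discrepancies between the various choices of generators $\epsilon_k$, $\bar\epsilon_k$, the orientation of $L$, and the identifications of $\pi_{2p+2i-2}(\SU(p)_{(p)})$ with $\Z/p\Z$. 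I do not expect either of these to require new ideas beyond a disciplined diagram chase, but they are the points where the argument could go wrong if one is careless. The upshot is a statement of exactly the shape of Lemma 4 in Kishimoto-Kono, but extended to all $1 \leq i \leq p-1$ rather than just the top one, which is what the subsequent cohomology computations in Section \ref{sec2} will need.
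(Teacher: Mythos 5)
Your reduction in the first paragraph is correct and is also implicit in the paper: the cofibre sequence $S^{2i}\stackrel{p}{\to}S^{2i}\to L_1\wedge S^{2i-1}$ together with $\pi_{2i}(S^{2i+1}_{(p)})=0$ shows $[L_{1(p)}\wedge S^{2i-1}_{(p)},S^{2i+1}_{(p)}]\cong\Z/p\Z$, generated by $(q_1\wedge 1)_{(p)}$ and detected faithfully by the induced map on $H^{2i+1}(-,\Z/p\Z)$. Your cohomological computation for $1\leq i\leq p-2$ (pulling back $c^*(y_{i+1})\equiv i(y_i\otimes y-y\otimes y_{i-1})$ from Lemma \ref{comm}, noting that only the term $y\otimes y_i$ survives the restriction along $\eta|_{L_1}\wedge\bar\epsilon_i$ and that $\mathcal I(2)^3$ dies for degree reasons) is a legitimate alternative route for those $i$, granted the same unstated property of $\eta$ (that $\eta|_{L_1}^*(y)$ generates $H^2(L_1,\Z/p\Z)$) that the paper itself uses in Corollary \ref{maincor}. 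The paper deliberately does \emph{not} argue this way: it treats all $i$ uniformly and purely homotopy-theoretically via Bott's Samelson product computation, precisely because the case $i=p-1$ (the class $x_p$, which does not pull back from $\PU(p)$) cannot be reached by Lemma \ref{comm}, and once the machinery for that case is in place it handles every $i$ at once.

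The gap is in your case $i=p-1$, which you correctly identify as the hard one but then dispatch with ``one reads off that the composite equals a unit times $(q_1\wedge 1)_{(p)}$.'' That is exactly where the work lies. Knowing that $\lambda_p\circ c_{(p)}\circ(\eta\wedge\bar\epsilon_{(p-1)(p)})\circ(\rho_{(p)}\wedge 1)\neq 0$ in $\pi_{4p-4}(S^{2p-1}_{(p)})$ says nothing \emph{a priori} about the restriction of the map to the $2$-skeleton $L_1\wedge S^{2p-3}$, because the covering map $\rho$ does not factor through $L_1$; ``naturality of $c$ and the cellular structure,'' which you offer as the fix, does not bridge this. The paper's mechanism is the Mimura--Nishida--Toda $p$-local splitting $(L\wedge S^1)_{(p)}\simeq\bigvee_{k=1}^p(L_k\wedge S^1)_{(p)}$, which decomposes $(\rho\wedge 1)_{(p)}$ as $\bigvee_k f_k$; one then checks that $[L_k\wedge S^{2i-1},S^{2i+1}]_{(p)}=0$ for $2\leq k\leq p-1$ (so the middle summands contribute nothing) and that $f_p$ is divisible by $p$ (so the top summand contributes $p\cdot b_i\cdot\Sigma^{2i-2}(\alpha)=0$ in $\Z/p\Z$), forcing the nonvanishing onto the $L_1$-summand and hence $a_{p-1}\in\Z_{(p)}^{\times}$. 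None of these three inputs appears in your sketch, and without them the $i=p-1$ case does not close. Since you would need to assemble this machinery anyway, the hybrid structure of your argument buys little over the paper's uniform treatment.
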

\begin{proof}
First of all, we know from \cite[Proposition 9.6]{MR0295347} that there exists a $p$-local splitting
$$(L \wedge S^1)_{(p)} = \bigvee_{k=1}^p (L_k \wedge S^1)_{(p)}.$$ Thus, $(L \wedge S^{2i-1})_{(p)} = \bigvee_{k=1}^p (L_k \wedge S^{2i-1})_{(p)}$ and the map $$\rho \wedge 1_{S^{2i-1}} \colon   \underbrace{S^{2p-1} \wedge S^{2i-1}}_{{S^{2p+2i-2}}} \to L \wedge S^{2i-1}$$ can be decomposed $p$-locally as
\begin{equation} \label{eq1}
(\rho \wedge 1_{S^{2i-1}})_{(p)} = \bigvee_{k=1}^p f_k, \quad \mbox{with} \quad f_k \colon  S^{2p+2i-2}_{(p)} \to (L_k \wedge S^{2i-1})_{(p)}, \quad 1 \leq k \leq p.
\end{equation}
Since the map $\rho \colon S^{2p-1} \to L$ is a $p$-fold covering, we get $f_p$ is equal to multiplication by $p$. Now, for $1 \leq k \leq p-1$, the cofiber sequence $S^{2k-1} \stackrel{p}{\to} S^{2k-1} \to L_k$ (coming from the definition of $L_k$) induces a long exact sequence
$$\dots \to \pi_{2i+2k-1}(S^{2i+1}) \stackrel{p}{\to}\pi_{2i+2k-1}(S^{2i+1}) \to [L_k \wedge S^{2i-1}, S^{2i+1}] \to  \pi_{2i+2k-2}(S^{2i+1}) \stackrel{p}{\to} \cdots.$$
From the computations of the $p$-local homotopy groups of spheres, we obtain for all $1 \leq i \leq p$ that
$$[L_k \wedge S^{2i-1}, S^{2i+1}]_{(p)} = \begin{cases} \Z_{(p)} & k=1 \\
0 & 2 \leq k \leq p-1 \\
\Z/p\Z & k=p. \end{cases}$$ 
Moreover, it follows that the group $[L_k \wedge S^{2i-1}, S^{2i+1}]_{(p)}$ is generated by the map $(q_1 \wedge 1_{S^{2i-1}})_{(p)}$.
Consider now the map 
$$\lambda_{i+1} \circ c_{(p)} \circ (\eta \wedge \bar \epsilon_{i(p)}) \colon  L_{(p)} \wedge S^{2i-1}_{(p)} \to S^{2i+1}_{(p)}.$$ We obtain from Equation \eqref{eq1} and the sentence after Equation \eqref{homserre} that
\begin{equation} \label{eqimp}
\lambda_{i+1} \circ c_{(p)} \circ (\eta \wedge \bar \epsilon_{i(p)}) 
= a_i \cdot (q_1 \wedge 1_{S^{2i-1}})_{(p)} \vee b_i \cdot (\Sigma^{2i-2}(\alpha))
\end{equation}
 for some $a_i,b_i \in \Z_{(p)}$ and $1 \leq i \leq p-1$. Here, we consider $\Sigma^{2i-2}(\alpha)$ as map $$L_{p(p)} \wedge S^{2i-1}_{(p)}=S^{2p + 2i-2}_{(p)} = S^{2i-2}_{(p)}\wedge S^{2p}_{(p)} \stackrel{1 \wedge \alpha_{(p)}}{\to} S^{2i-2}_{(p)}\wedge S^3_{(p)} = S^{2i+1}_{(p)}.$$ Now is the point, when we are going to use Theorem \ref{bott}. Indeed, we have the following identification of homotopy classes of maps from $S^{2p+2i-2}$ to $S^{2i+1}$ (note that $\pi_{2p+2i-2}(S^{2i+1})=\Z/p\Z$ by Equation \ref{homserre}):
\begin{eqnarray*}
0 &\stackrel{{\rm Th.} \ref{bott}}{\neq} & \lambda_{i+1} \circ \langle \bar \epsilon_p, \bar \epsilon_i \rangle_{(p)} \\
&=& \lambda_{i+1} \circ c_{(p)} \circ (\bar\epsilon_{p(p)} \wedge \bar\epsilon_{i(p)})  \\
&\stackrel{\eqref{factorization}}=& \lambda_{i+1} \circ c_{(p)} \circ (\eta \wedge \bar\epsilon_{i(p)}) \circ (\rho_{(p)} \wedge 1_{S^{2i-1}})\\
&\stackrel{\eqref{eqimp}}{=}& \left(a_i \cdot (q_1 \wedge 1_{S^{2i-1}})_{(p)} \vee b_i \cdot (\Sigma^{2i-2}(\alpha)) \right) \circ (\rho_{(p)} \wedge 1_{S^{2i-1}}) \\
&\stackrel{\eqref{eq1}}=& a_i \cdot ((q_1 \wedge 1_{S^{2i-1}})_{(p)} \circ f_1) \vee p b_i \cdot (\Sigma^{2i-2}(\alpha)) \\
&=& a_i \cdot ((q_1 \wedge 1_{S^{2i-1}})_{(p)} \circ f_1).\\
\end{eqnarray*}
Finally, this implies that $(q_1 \wedge 1_{S^{2i-1}})_{(p)} \circ f_1$ is a non-zero multiple of the class $$\Sigma^{2i-2}(\alpha) \in \pi_{2p+2i-2}(S^{2i+1})_{(p)} = \Z/p\Z$$ and we can conclude that $a_i \in \Z_{(p)}^{\times}$. This finishes the proof.
\end{proof}

The preceding proof followed closely the work of Kishimoto-Kono \cite{MR2544124} and we do not claim any originality for this computation.

\subsection{Applications to cohomology}

We will now apply the computation from the previous section to study the effect of the commutator map on cohomology. Recall that
$$H^*(\PU(p),\Z/p\Z) \cong (\Z/p\Z)[y]/(y^{p}) \otimes_{\Z} \Lambda_{\Z/p\Z}^*(y_1,y_2,\dots,y_{p-1})$$
with $|y|=2$, $|y_i| = 2i-1$.
We
denote by $\mathcal J_i$ the ideal in the ring $H^*(\PU(p),\Z/p\Z) \otimes H^*(\PU(p),\Z/p\Z)$ which is generated by $y^2 \otimes 1,y_j \otimes 1, 1 \otimes y_k, 1 \otimes y$ for $1 \leq j,k \leq p-1$ with $k \neq i$.

\begin{corollary} \label{maincor} Let $2 \leq i \leq p$ and $c \colon \PU(p) \times \PU(p) \to \SU(p)$ the commutator map.
Then, the induced map $$c^* \colon  H^*(\SU(p),\Z/p\Z) \to H^*(\PU(p),\Z/p\Z) \otimes_{\Z} H^*(\PU(p),\Z/p\Z)$$
satisfies $$c^*(x_i) = a_i(y \otimes y_{i-1}) \mod  \mathcal J_{i-1}$$
for some $a_i \in \Z_{(p)}^{\times}$.
\end{corollary}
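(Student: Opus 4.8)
First I would observe that the existence of a scalar $a_i\in\Z/p\Z$ with $c^*(x_i)\equiv a_i\,(y\otimes y_{i-1})\bmod\mathcal J_{i-1}$ costs nothing: the commutator map is constant on the wedge $\PU(p)\vee\PU(p)$, hence factors through $\PU(p)\wedge\PU(p)$, so $c^*$ carries every positive-degree class into the span of the tensors $u\otimes v$ with $u,v$ of positive degree; and by Theorem~\ref{baumbrowder} the image of that span in the quotient ring $H^*(\PU(p),\Z/p\Z)^{\otimes 2}/\mathcal J_{i-1}\cong(\Z/p\Z)[y]/(y^2)\otimes\Lambda(y_{i-1})$ is, in degree $2i-1$, one-dimensional and spanned by $y\otimes y_{i-1}$. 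The whole content of the corollary is therefore that $a_i$ is a unit, and this is where Lemma~\ref{lem:crucial} must be brought in.

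The plan is to detect $a_i$ by evaluating $c^*(x_i)$ on a test space built from Lemma~\ref{lem:crucial}. Consider the map $\Phi:=\eta|_{L_1}\times\bar\epsilon_{i-1}\colon L_1\times S^{2i-3}\to\PU(p)\times\PU(p)$; all cohomology is taken with $\Z/p\Z$-coefficients, which is insensitive to $p$-localization, so I suppress localizations throughout. Since $\Phi$ maps $L_1\vee S^{2i-3}$ into $\PU(p)\vee\PU(p)$, the composite $c\circ\Phi$ is trivial on the wedge and factors through $L_1\wedge S^{2i-3}$; postcomposing with $\lambda_i$ and invoking Lemma~\ref{lem:crucial} with its index equal to $i-1$ gives $\lambda_i\circ c\circ\Phi=a\cdot(q_1\wedge 1_{S^{2i-3}})\circ(\text{collapse})$ for some unit $a$. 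Because $q_1\wedge 1_{S^{2i-3}}$ maps the top cohomology class of $L_1\wedge S^{2i-3}$ isomorphically, and the collapse $L_1\times S^{2i-3}\to L_1\wedge S^{2i-3}$ identifies $H^{2i-1}(L_1\wedge S^{2i-3},\Z/p\Z)$ with the line $H^2(L_1,\Z/p\Z)\otimes H^{2i-3}(S^{2i-3},\Z/p\Z)$ inside $H^{2i-1}(L_1\times S^{2i-3},\Z/p\Z)$, this yields $\Phi^*(c^*(\lambda_i^*\iota))\ne 0$, where $\iota$ generates $H^{2i-1}(S^{2i-1},\Z/p\Z)$.

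It then remains to convert this into a statement about $c^*(x_i)$ itself. By Serre's Theorem~\ref{serre} the map $(\mu_{(p)})^*$ is an isomorphism, so $\lambda_i^*\iota=u\,x_i+D$ with $u$ a unit and $D$ a sum of decomposables of $\Lambda(x_2,\dots,x_p)$; since $D$ has odd degree $2i-1$, each of its monomials involves at least three of the generators $x_a$, whence $c^*(D)$ is a sum of tensors $\xi\otimes\zeta$ in which $\xi$ is a product of at least three positive-degree classes, so $\Phi^*(c^*(D))=0$ because $\eta|_{L_1}^*$ annihilates $H^{\ge 3}(\PU(p),\Z/p\Z)$. Hence $\Phi^*(c^*(x_i))=u^{-1}\Phi^*(c^*(\lambda_i^*\iota))\ne 0$. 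On the other hand, the ring map $\bar\epsilon_{i-1}^*\colon H^*(\PU(p),\Z/p\Z)\to H^*(S^{2i-3},\Z/p\Z)$ kills every decomposable and every indecomposable generator except $y_{i-1}$, which it sends to a generator — for $i\ge 3$ because $\pi^*(y_{i-1})=x_{i-1}$ and $\epsilon_{i-1}$ generates $\pi_{2i-3}(\SU(p))$, and for $i=2$ because $\bar\epsilon_1$ generates $\pi_1(\PU(p))$ and $y_1$ generates $H^1$. Consequently $\Phi^*$ annihilates every monomial $m_1\otimes m_2$ of degree $2i-1$ with $m_2$ not a scalar multiple of $y_{i-1}$; as the reduced degree-$(2i-1)$ part of $\mathcal J_{i-1}$ is spanned by exactly such monomials, writing $c^*(x_i)=a_i\,(y\otimes y_{i-1})+j$ with $j\in\mathcal J_{i-1}$ gives $\Phi^*(c^*(x_i))=a_i\cdot\bigl(\eta|_{L_1}^*(y)\otimes\bar\epsilon_{i-1}^*(y_{i-1})\bigr)$. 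Comparing this with the non-vanishing established above forces $a_i\ne 0$, i.e.\ $a_i\in\Z_{(p)}^\times$ (and, as a by-product, $\eta|_{L_1}^*(y)\ne 0$).

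The hard part is the bridge between Lemma~\ref{lem:crucial} — which only records the composite to the sphere $S^{2i-1}$, hence the class $\lambda_i^*\iota$ rather than $x_i$ — and $c^*(x_i)$ itself: one must control the decomposable correction $D$ and, more delicately, check that the single test map $\Phi$ simultaneously detects $y\otimes y_{i-1}$ and vanishes on all of $\mathcal J_{i-1}$ in degree $2i-1$. This last point reduces to the elementary fact that $H^*(S^{2i-3},\Z/p\Z)$ is an exterior algebra on one generator, which pins down $\bar\epsilon_{i-1}^*$ completely; everything else is bookkeeping with the presentation of $H^*(\PU(p),\Z/p\Z)$ from Theorem~\ref{baumbrowder}.
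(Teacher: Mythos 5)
Your argument is correct and follows essentially the same route as the paper: pull back along the test map $\eta|_{L_1}\times\bar\epsilon_{i-1}$, use Lemma \ref{lem:crucial} to detect the composite to the sphere, and observe that $\mathcal J_{i-1}$ dies under this pullback in the relevant degree. The only difference is that you make explicit two points the paper leaves implicit -- the decomposable correction between $\lambda_i^*\iota$ and $x_i$, and the degree-by-degree check that the test map kills $\mathcal J_{i-1}$ (the paper's blanket claim that the kernel of $(\eta\times\bar\epsilon_i)^*$ is exactly $\mathcal J_i$ is not literally true, e.g.\ on $y_1\otimes 1$, but as you show it holds where it matters).
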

\begin{proof}
Note that $$H^*\left(L_{1(p)} \times S^{2i-1}_{(p)},\Z/p\Z \right) = \Lambda^*(y) \otimes_{\Z} \Lambda^*(y_i)$$ with $|y|=2$ and $|y_i| = 2i-1$ such that the natural map $$ \eta \times \bar\epsilon_{i(p)} \colon  L_{1(p)} \times S^{2i-1}_{(p)} \to \PU(p) \times \PU(p)$$ induces the natural homomorphism from
$$ (\Z/p\Z)[y]/(y^{p}) \otimes_{\Z} \Lambda_{\Z/p\Z}^*(y_1,y_2,\dots,y_{p-1}) \otimes (\Z/p\Z)[y]/(y^{p}) \otimes_{\Z} \Lambda_{\Z/p\Z}^*(y_1,y_2,\dots,y_{p-1})$$
to $\Lambda^*(y) \otimes_{\Z} \Lambda^*(y_i)$ which sends $y \otimes 1$ to $y \otimes 1$, $1 \otimes y_i$ to $1 \otimes y_i$ and the other generators to zero. The kernel of this homomorphism is precisely the ideal $\mathcal J_i$.
Now, Lemma \ref{lem:crucial} implies that the composition
$$L_{1(p)} \wedge S^{2i-1}_{(p)} \to \PU(p)_{(p)} \wedge \PU(p)_{(p)} \stackrel{c}{\to} \SU(p)_{(p)} \stackrel{\lambda_{i+1}}{\to} S_{(p)}^{2i+1}$$ is homotopic to $a_i(q_1 \wedge 1_{S^{2i-1}})_{(p)}$ for some $a_i \in \Z_{(p)}^{\times}$. Since the map $q_1 \wedge 1_{S^{2i-1}} \colon  L_1 \wedge S^{2i-1} \to S^{2i+1}$ sends the generator $x_{i+1}$ of the cohomology of $S^{2i+1}$ to $y \otimes y_i \in \Lambda^*(y) \otimes_{\Z} \Lambda^*(y_i)$, this implies the claim.
\end{proof}

\section{Solvability of equations} 
\label{solv}

\subsection{Hyperlinear groups and related classes of groups}
\label{hyperlinear}
The unitary group $\U(n)$ is equipped with a natural metric that arises from the normalized Frobenius norm, i.e.,
$$d(u,v) = \frac1{n^{1/2}} \left( \sum_{i,j=1}^n |u_{ij} - v_{ij}|^2 \right)^{1/2}.$$
Informally speaking, a group $G$ is said to be hyperlinear if its multiplication table can be modelled locally (that means on finite subsets of the group) by unitary matrices up to small mistakes measured in the normalized Frobenius norm. More precisely:

\begin{definition}
A group $G$ is called {\it hyperlinear} if for all finite subsets $F \subset G$ and all $\varepsilon>0$, there exists $n \in \N$ and a map $\varphi \colon  G \to \U(n)$, such that
\begin{enumerate}
\item $d(\varphi(gh),\varphi(g)\varphi(h))< \varepsilon$ for all $g,h \in F$, and
\item $d(\varphi(g),1_n) > 1$, for all $g \in F \setminus \{e\}$.
\end{enumerate}
\end{definition}
There are variations on this definition but they are all equivalent. A detailed discussion of the class of hyperlinear groups can be found in \cite{MR2460675}. If in the above definition the unitary groups with their metrics are replaced by symmetric groups ${\rm Sym}(n)$ with the normalized Hamming metrics, then one obtains the definition of the class of sofic groups. This important class of groups was introduced by Gromov \cite{gromov} and Weiss \cite{weiss} in order to study certain problems in ergodic theory. Since the inclusion ${\rm Sym}(n) \subset \U(n)$ is compatible enough with the metrics, every sofic group is automatically hyperlinear -- see  \cite{MR2460675} for more details. It is not known if there are non-sofic groups.

We denote the set of prime numbers by $\mathbb P$. We will need the following easy proposition. 

\begin{proposition} \label{fixp}
Let $G$ be a countable hyperlinear group. Then $G$ is a subgroup of a quotient of $\prod_{p \in \mathbb P} \SU(p)$.
\end{proposition}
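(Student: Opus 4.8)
The plan is to produce, for a countable hyperlinear group $G$, a single map into an ultraproduct-type quotient of $\prod_{p \in \mathbb P} \SU(p)$ that is an injective homomorphism. First I would recall the standard reformulation of hyperlinearity via ultraproducts: fix a nonprincipal ultrafilter $\omega$ on $\mathbb N$; then $G$ hyperlinear means $G$ embeds into the ``universal sofic/hyperlinear group'' built as a metric ultraproduct of unitary groups $\U(n_k)$ with the normalized Frobenius (Hilbert--Schmidt) length function $\ell(u) = d(u,1)$. The point is that any sequence of approximate morphisms $\varphi_k \colon G \to \U(n_k)$ with errors tending to $0$ along $\omega$ and with $\ell(\varphi_k(g)) \not\to 0$ for $g \neq e$ assembles, after passing to the quotient by the normal subgroup of sequences with $\ell \to_\omega 0$, into an honest injective homomorphism. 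So it suffices to arrange that the target unitary groups can be taken to be $\SU(p_k)$ for a sequence of primes $p_k \to \infty$.

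The key steps, in order, are: (1) Upgrade an approximating map into $\U(n)$ to one into $\SU(n)$. Given $\varphi \colon G \to \U(n)$ that is an $(F,\varepsilon)$-approximation, multiply by scalars: one cannot in general make $\det \varphi(g) = 1$ pointwise while staying close, but the standard trick is to pass to a large matrix amplification. Concretely, replace $\U(n)$ by $\U(n) \hookrightarrow \SU(n^2)$ or rather use the block embedding $u \mapsto u \oplus \overline{\det(u)}^{?}$; the cleanest route is $\U(n) \hookrightarrow \SU(2n)$ via $u \mapsto \mathrm{diag}(u, \overline{u}^{t,-1}) = \mathrm{diag}(u,\bar u)$ up to a determinant correction, which has determinant $1$ and is isometric for the normalized Frobenius metric up to the factor coming from doubling the dimension (which is harmless since that metric is dimension-normalized, so the normalized distances are exactly preserved). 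This gives $(F,\varepsilon)$-approximations into $\SU(m)$ for $m$ even and arbitrarily large. (2) Pass from $\SU(m)$ to $\SU(p)$ with $p$ prime: embed $\SU(m) \hookrightarrow \SU(p)$ for any prime $p \geq m$ by $A \mapsto A \oplus 1_{p-m}$; this is again isometric for the normalized metrics up to a distortion factor $m/p$ on the distances, but since the relevant condition (2) in the definition is $d(\varphi(g),1) > 1$ and the Frobenius distance of $A \oplus 1_{p-m}$ to $1_p$ equals $\sqrt{m/p}$ times the distance in $\SU(m)$, this constant gets scaled. To fix this, first note condition (2) can be strengthened to $d(\varphi(g),1) > 1$ being replaceable by any fixed positive constant bound below (a standard amplification/tensor-power argument: replacing $\varphi$ by $\varphi^{\otimes r}$ pushes distances up, though one must be careful — actually the clean statement is that the universal hyperlinear group does not depend on the particular constant $1$ used, as long as it is positive and $< \sqrt 2$, and one has a Bernoulli-type amplification to adjust it). Then choose $p$ prime with $p$ close enough to $m$ (by Bertrand's postulate there is a prime in $[m, 2m)$), so that $m/p \geq 1/2$, keeping the lower bound on distances bounded away from $0$. (3) Assemble: for each $p \in \mathbb P$ choose the best available $(F_p, 1/p)$-approximation $\psi_p \colon G \to \SU(p)$ obtained from steps (1)--(2), where $F_p$ is an exhaustion of the countable group $G$ by finite sets. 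The product map $\psi = (\psi_p)_p \colon G \to \prod_{p \in \mathbb P} \SU(p)$ composed with the quotient by the normal subgroup $N = \{(u_p) : d(u_p,1) \to_\omega 0\}$ is then a homomorphism (errors vanish along $\omega$) and injective (the lower bound on distances survives in the quotient), exhibiting $G$ as a subgroup of $\left(\prod_{p \in \mathbb P} \SU(p)\right)/N$, which is a quotient of $\prod_{p \in \mathbb P} \SU(p)$.

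The main obstacle I expect is step (2), specifically the bookkeeping of the normalization constant in condition (2) of the definition of hyperlinear when passing between $\SU(m)$ and $\SU(p)$ of genuinely different sizes: the dimension-normalized Frobenius distance shrinks under the block inclusion $A \mapsto A \oplus 1_{p-m}$, and one must either invoke the fact (proved in \cite{MR2460675}) that the defining constant is immaterial, or carry out an explicit amplification (tensor powers, or direct sums of copies) to restore a uniform lower bound before quotienting. The rest is routine: step (1) is a standard determinant-correction trick, and step (3) is the usual ultraproduct packaging of approximate homomorphisms into an exact one. I would therefore organize the write-up as: recall the ultraproduct characterization, do the $\U \to \SU$ reduction, invoke Bertrand to get primes and handle the constant, then conclude.
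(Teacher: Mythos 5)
Your proof is correct and follows the same overall strategy as the paper: produce $(F_k,1/k)$-approximations of $G$ landing in $\SU(p_k)$ for primes $p_k \to \infty$, and then quotient $\prod_{p\in\mathbb P}\SU(p)$ by the normal subgroup of sequences whose normalized Frobenius distance to the identity tends to zero. The genuine difference is in how the prime dimensions are reached. The paper first amplifies by the diagonal embedding $\U(n)\subset\U(mn)$, $u\mapsto u\otimes 1_m$, which is \emph{exactly} isometric for the normalized metric, and then passes to $\SU(mn+1)$ via $u\mapsto\mathrm{diag}(u,\det(u)^{-1})$; since $\gcd(n,1)=1$, Dirichlet's theorem supplies an $m$ with $mn+1$ prime, and the total distortion of normalized distances is a factor $\sqrt{mn/(mn+1)}$, i.e.\ negligible. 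You instead pad with the identity, $A\mapsto A\oplus 1_{p-m}$ with $p\in[m,2m)$ by Bertrand's postulate, which shrinks normalized distances by a factor as small as $1/\sqrt2$ and therefore forces you to invoke (or reprove, via tensor-power amplification) the independence of the constant in condition (2) of the definition of hyperlinearity. That lemma is true and is in Pestov's survey, so your argument closes, but the Dirichlet route sidesteps the one obstacle you correctly flag as the main issue. Your use of an ultrafilter versus the paper's plain limit along $k\to\infty$ is immaterial. One small simplification in your step (1): $\mathrm{diag}(u,\bar u)$ already has determinant $|\det u|^2=1$, so no further correction is needed there; on the other hand the single-entry correction $\mathrm{diag}(u,\det(u)^{-1})$ is what makes the dimension count come out as $mn+1$ and hence mesh with Dirichlet.
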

\begin{proof}
Let $(g_i)_{i \in \N}$ be an enumeration of $G$. Let $n_k$ be some integer such that the definition of hyperlinearity is satisfied for the finite set $\{g_1,\dots,g_k\}$ with $\varepsilon=1/k$. Let $\varphi_k \colon  G \to \U(n_k)$ the the corresponding map. Without loss of generality, we may assume that $\lim_{k \to \infty} n_k = \infty$. Indeed, the natural diagonal embedding $\U(n) \subset \U(nm)$ is isometric with respect to the normalized Frobenius metric, so that we can replace $n_k$ by $k n_k$ if necessary. Using the natural embedding $\U(n) \subset \SU(n+1)$, we may now assume without loss of generality that the image of $\varphi_k$ lies in $\SU(n_k)$.
Similarly, replacing $n_k$ again by a suitable number of the form $mn_k +1$, we may assume that $n_k$ is a prime number -- using Dirichlet's theorem.
Now, consider
$$\mathcal N := \left\{ (u_p)_p \in \prod_{p \in \mathbb P} \SU(p) \mid \lim_{k \to \infty} d(1_{n_k}, u_{n_k})=0 \right\}.$$
It is easy to see that ${\mathcal N} \subset \prod_{p \in \mathbb P} \SU(p)$ is a normal subgroup and $\varphi = \prod_{k \in \N} \varphi_k$ defines a injective homomorphism from $G$ to the quotient of $\prod_{p \in \mathbb P} \SU(p)$ by $\mathcal N$. This proves the claim.
\end{proof}

\begin{remark} \label{hyperl}
It is also true that any subgroup of a quotient of $\prod_{n \in \N} \U(n)$ (or $\prod_{p \in \mathbb P} \SU(p)$ for that matter) is hyperlinear. This follows from results in \cite{stolzthom}, based on ideas from work of Nikolov-Segal, see \cite{MR2995181}. It is not known if there are any groups that are not hyperlinear -- essentially all groups that are known to by hyperlinear, are also known to be sofic.
\end{remark}

\subsection{Proofs of the main results}

\label{sec2}

We can now prove the main theorems, i.e., Theorem \ref{mainthm} and Theorem \ref{sup}. Let us first study the commutator map again.
It follows from Corollary \ref{maincor} that 
\begin{equation} \label{crucial}
c^*(x_2\cdots x_{p}) = a \cdot (y^{p-1} \otimes y_1\cdots y_{p-1}) \mod \mathcal J \end{equation} for some $a \in \Z_{(p)}^{\times}$, where $\mathcal J$ is the ideal defined as
$$\mathcal J:=\sum_{E \subsetneq \{1,\dots,p-1\}} \prod_{i \in E} (y \otimes y_i) \cdot \prod_{i \not \in E} \mathcal J_i^{\geq (2,2i-1)},$$
where ${\mathcal J}_i^{\geq (2,2i-1)}$ denotes the subspace of ${\mathcal J}_i$, which is of bi-degree $(2,2i-1)$ or more. Here, we use the natural bi-grading of the tensor product $H^*(\PU(p),\Z/p\Z) \otimes_{\Z} H^*(\PU(p),\Z/p\Z)$. 
It is easy to see that $y^{p-1} \otimes y_1\cdots y_{p-1} \not \in \mathcal J$. Indeed, assume that $y^{p-1} \otimes y_1\cdots y_{p-1}  \in \mathcal J$. The element $y^{p-1} \otimes y_1\cdots y_{p-1}$ is of bi-degree $(2(p-1), 1+3+\dots+2p-1)$ so that all contributions of $\mathcal J^{\geq (2,2i-1)}_i$ must of of minimal possible degree $(2,2i-1)$. Note that $y \otimes y_i \not \in \mathcal J_i$ by construction. This implies that for any $E \subset \{1,\dots,p-1\}$, a product of $p-1$ factors of the form $y \otimes y_i$ for $i \in E$ and otherwise in $\mathcal J_i^{(2,2i-1)}$ cannot yield a summand $y^{p-1}\otimes y_1\dots y_{p-1}$ unless $E=\{1,\dots,p-1\}$, but this is forbidden.

This proves as a first step that any continuous map which is homotopic to the commutator map $c \colon \PU(p) \times \PU(p) \to \SU(p)$ must be surjective. Indeed, the previous computation shows that its effect on the fundamental class $x_2\cdots x_{p} \in H^{p^2-1}(\SU(p),\Z/p\Z)$ is non-trivial on cohomology -- and this happens only if the map (and any map homotopic to it) is surjective.

We now attempt to extend this result to other words in ${\mathbf F}_2$. Later, we will also allow to vary $p$ and will see that our approach works for all elements which do not lie in $[{\mathbf F}_2,[{\mathbf F}_2,{\mathbf F}_2]]$.
First, we need some preparations. Note that ${\mathbf F}_2^{(1)} := [{\mathbf F}_2,{\mathbf F}_2]$ is a free group with basis $\{[x_1^n,x_2^m] \mid nm \neq 0 \}$, see Proposition 4 in Chapter I, \S1.3 of \cite{trees}.

\begin{proposition}
Let $w = [x_1^{n},x_2^{m}]$. Then
$$w^* \colon  H^*(\SU(p),\Z/p\Z) \to H^*(\PU(p),\Z/p\Z) \otimes H^*(\PU(p),\Z/p\Z)$$ 
satisfies $w^*(x_i) =nm \cdot a_i \cdot (y \otimes y_{i-1}) \mod  \mathcal J_{i-1}$ for some $a_i \in \Z_{(p)}^{\times}$ independent of $n,m$. More generally, if $w = \prod_{k=1}^s [x_1^{n_k},x_2^{m_k}]^{l_k}$, then
$$w^*(x_i) = \left(\sum_{k=1}^s n_km_kl_k \right) \cdot a_i \cdot (y \otimes y_{i-1}) \mod  \mathcal J_{i-1}.$$
On the other side, if $w \in {\mathbf F}_2^{(2)}$, the second derived subgroup, then we have $w^*=0$.
\end{proposition}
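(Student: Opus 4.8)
The plan is to compute $w^*(x_i)$ modulo $\mathcal J_{i-1}$ by reducing everything to the single commutator case treated in Corollary \ref{maincor} and then keeping track of how the group operations of $\mathbf F_2$ act on cohomology modulo $\mathcal J_{i-1}$. The key observation is that $\mathcal J_{i-1}$ is chosen precisely so that, for the purposes of tracking the coefficient of $y \otimes y_{i-1}$, the cohomology behaves linearly: in the quotient $H^*(\PU(p)\times\PU(p),\Z/p\Z)/\mathcal J_{i-1}$, the relevant classes $y\otimes 1$, $1\otimes y_{i-1}$, and $y\otimes y_{i-1}$ span, and products of two or more ``non-trivial'' generators either land in $\mathcal J_{i-1}$ or are forced to be $y\otimes y_{i-1}$ itself.

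First I would handle $w = [x_1^n, x_2^m]$. Write $w = c \circ (\mu_n \times \mu_m)$ where $\mu_n,\mu_m \colon \PU(p)\to\PU(p)$ are the power maps and $c$ is the commutator map $\PU(p)\times\PU(p)\to\SU(p)$. Then $w^* = (\mu_n\times\mu_m)^* \circ c^*$. By Corollary \ref{maincor}, $c^*(x_i) = a_i(y\otimes y_{i-1}) \bmod \mathcal J_{i-1}$. Now apply $(\mu_n\times\mu_m)^*$: by Lemma \ref{coprod}, $\mu_n^*(y) = n\cdot y \bmod \mathcal I(1)^2$ (and $\mu_n^*(y)=ny$ exactly since $|y|=2$ and $\Delta(y)$ is primitive, but the approximate statement suffices) and $\mu_m^*(y_{i-1}) = m\cdot y_{i-1} \bmod \mathcal I(1)^2$. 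The error terms in $\mathcal I(1)^2$ on each factor, when tensored in, contribute terms that are products of at least two positive-degree generators in one of the two tensor slots; one checks these all lie in $\mathcal J_{i-1}$ (here is where the bidegree bookkeeping is needed — a product $y\cdot y_j$ or $y_j y_k$ in the first slot, or anything other than $y_{i-1}$ in the second, is killed). Hence $(\mu_n\times\mu_m)^*(y\otimes y_{i-1}) = nm\cdot (y\otimes y_{i-1}) \bmod \mathcal J_{i-1}$, giving $w^*(x_i) = nm\cdot a_i\cdot(y\otimes y_{i-1}) \bmod \mathcal J_{i-1}$, with $a_i$ coming from Corollary \ref{maincor} and visibly independent of $n,m$.

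Next I would treat a general product $w = \prod_{k=1}^s [x_1^{n_k},x_2^{m_k}]^{l_k}$. The word map $w$ is the pointwise product (in $\SU(p)$) of the word maps $w_k := [x_1^{n_k},x_2^{m_k}]^{l_k}$, i.e. $w = m^{(s)}\circ(w_1,\dots,w_s)$ where $m^{(s)}\colon\SU(p)^s\to\SU(p)$ is iterated multiplication. On cohomology this is the iterated coproduct in $H^*(\SU(p),\Z/p\Z)$ followed by the tensor product of the $w_k^*$. Since the comultiplication on $H^*(\SU(p),\Z/p\Z)$ is primitive ($\Delta(x_i)=x_i\otimes 1 + 1\otimes x_i$), we get $w^*(x_i) = \sum_{k=1}^s w_k^*(x_i) \bmod (\text{decomposables})$, and the decomposable correction terms in $H^*(\SU(p),\Z/p\Z)$ map into $\mathcal J_{i-1}$ under each $w_k^*$ (a product $x_j x_{j'}$ maps to a product of two classes each of which is $\equiv a_{j}(y\otimes y_{j-1})$ or $0$, and such a product lies in $\mathcal J_{i-1}$). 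Finally for a single $[x_1^{n_k},x_2^{m_k}]^{l_k}$ I would factor it as $\mu_{l_k}\circ [x_1^{n_k},x_2^{m_k}]$ (power map post-composed with the commutator word), and the same $\mu_{l_k}^*$-computation as above — using Lemma \ref{coprod} for $\PU(p)$, wait, here $\mu_{l_k}$ is the power map on $\SU(p)$, but $\SU(p)$ has primitive comultiplication so $\mu_{l_k}^*(x_i)=l_k x_i \bmod$ decomposables — contributes the factor $l_k$. Assembling, $w^*(x_i) = \bigl(\sum_k n_k m_k l_k\bigr)\cdot a_i\cdot(y\otimes y_{i-1}) \bmod \mathcal J_{i-1}$.

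For the last assertion, if $w\in\mathbf F_2^{(2)}$, the word map $w\colon\PU(p)\times\PU(p)\to\SU(p)$ factors (up to homotopy) through a composition that, on cohomology, applies $c^*$ twice in a nested fashion; more directly, $w$ can be written as an iterated commutator of commutator words, and each application of the commutator construction, by Corollary \ref{maincor}, lands the generators $x_i$ in the ideal generated by classes of the form $y\otimes y_{j-1}$ — but the next commutator operation then sees these as decomposables (products involving $y$, which maps to zero under $c^*$ by Lemma \ref{comm}, together with odd classes), forcing $w^*(x_i)=0$. The cleanest route: realize any $w\in\mathbf F_2^{(2)}$ as $w = c\circ(u\times v)$ with $u,v\in[\mathbf F_2,\mathbf F_2]$, so $w^* = (u^*\otimes v^*)\circ c^*$; since $c^*(x_i)\in$ (ideal generated by $y\otimes y_{j}$'s) $\bmod \mathcal J_{i-1}$ and $u^*,v^*$ each kill $y$ (because word maps with content in $[\mathbf F_2,\mathbf F_2]$ factor through $\PU(p)\times\PU(p)\to\SU(p)$, and $\pi^*(y)=0$ — here one uses that $u,v$ have trivial image in $H^1$, so $u^*(y)=0$ as $y$ is not in the image of a power map alone), every term of $c^*(x_i)$ maps to zero. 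The main obstacle I anticipate is precisely this last step: making rigorous the claim that $u^*(y)=0$ and $v^*(y_{j})\in$ decomposables for $u,v\in[\mathbf F_2,\mathbf F_2]$, which requires knowing that such word maps, composed into $\PU(p)$, induce the zero map on $H^2$ and on the primitive quotient — this follows because the content lies in $[\mathbf F_2,\mathbf F_2]$, hence the map is null on $H^1$, hence on the degree-$2$ part of a primitively generated piece, but the tensor-product bookkeeping for the higher $y_j$ and the interaction with $\mathcal J_{i-1}$ is where the care is needed.
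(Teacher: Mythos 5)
Your treatment of the two displayed formulas is correct and matches the paper's argument: factor $[x_1^{n},x_2^{m}]$ as $c\circ(\mu_n\times\mu_m)$ and combine Corollary \ref{maincor} with Lemma \ref{coprod}, then use that a product of word maps $w_1w_2$ with $w_1,w_2\in[\F_2,\F_2]$ (so that both factor through maps $\PU(p)\times\PU(p)\to\SU(p)$) acts on the generators by $x_i\mapsto w_1^*(x_i)+w_2^*(x_i)$. One small simplification: since $\Delta(x_i)=x_i\otimes 1+1\otimes x_i$ holds exactly in $H^*(\SU(p),\Z/p\Z)$, this additivity is exact and there are no decomposable correction terms to dispose of.

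The genuine gap is in the last claim, and you correctly sense where it is. Your plan to kill $c^*(x_i)$ term by term under $\bar u^*\otimes\bar v^*$ requires control of \emph{all} monomials of $c^*(x_i)$, including the error terms lying in $\mathcal J_{i-1}$; that ideal contains classes such as $y_{j}\otimes y_{k}$ with $j,k\geq 2$, which are not killed by the observation $\pi^*(y)=\pi^*(y_1)=0$, so the argument as sketched does not close. The fix uses only the additivity you already established. First, $(w^{-1})^*(x_i)=-w^*(x_i)$ for $w\in[\F_2,\F_2]$ (apply additivity to $ww^{-1}$, whose word map is constant). Hence for $u,v\in[\F_2,\F_2]$ the word $uvu^{-1}v^{-1}$ gives $[u,v]^*(x_i)=u^*(x_i)+v^*(x_i)-u^*(x_i)-v^*(x_i)=0$ exactly, and since the $x_i$ generate the ring, $[u,v]^*$ vanishes in positive degrees; a general $w\in\F_2^{(2)}$ is a product of such commutators, so $w^*(x_i)=0$ by additivity again. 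Equivalently: $[u,v]$ factors through the commutator map $\SU(p)\times\SU(p)\to\SU(p)$, which is trivial on $H^*(-,\Z/p\Z)$ because that Hopf algebra is primitively generated; the nontriviality exploited in Corollary \ref{maincor} lives only on $\PU(p)\times\PU(p)$, precisely because $\pi^*(y)=0$.
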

\begin{proof} If
$w_1 \colon  \PU(p) \times \PU(p) \to \SU(p)$ and $w_2 \colon  \PU(p) \times \PU(p) \to \SU(p)$ are word maps, for $w_1,w_2 \in [\F_2,\F_2]$, so that the associated word map can be factored through $\SU(p)$, then 
$$w_1w_2 \colon  \PU(p) \times \PU(p) \to \SU(p)$$ is equal
to $$m_{\SU(p)} \circ (w_1 \times w_2) \circ \Delta_{\PU(p) \times \PU(p)} \colon  \PU(p) \times \PU(p) \to \SU(p).$$
Hence, the effect on cohomology can be computed explicitly as follows:
\begin{eqnarray*}
x_i &\mapsto& x_i \otimes 1 + 1 \otimes x_i \\
&\mapsto& w_1^*(x_i) \otimes 1 + 1 \otimes w_2^*(x_i) \\
&\mapsto& w_1^*(x_i) + w_2^*(x_i).
\end{eqnarray*}
The results follow directly from Lemma \ref{coprod} and Corollary \ref{maincor}.
This finishes the proof of the proposition.
\end{proof}

Consider now the 3-dimensional Heisenberg group 
$${\mathbf H}_3(\Z) := \left\{ \left( \begin{matrix} 1 & a & b \\ 0 & 1 & c \\ 0 & 0 & 1 \end{matrix} \right) \mid a,b,c \in \Z \right \}.$$ Sending $x_1$ to the matrix with $a=1,b=c=0$ and $x_2$ to the matrix with $a=b=0, c=1$, we get that $w = \prod_{k=1}^l [x_1^{n_k},x_2^{m_k}]^{l_k}$ is sent to the matrix with $a=c=0$ and $b= \sum_{k=1}^l n_km_kl_k$. It is well known that ${\mathbf H}_3(\Z) = {\mathbf F}_2/[[{\mathbf F}_2,{\mathbf F}_2],{\mathbf F}_2]$.
Coming back to the proof of Theorem \ref{sup}, we see that we will succeed with our strategy if $w$ can be mapped non-trivially to the central quotient of ${\mathbf H}_3(\Z)$ by $p\Z$.
\begin{proof}[Proof of Theorem \ref{sup}]
The result is clear if $\varepsilon(w) \not \in [{\mathbf F}_2,{\mathbf F}_2]$, so that we may assume that $\varepsilon(w) \in [{\mathbf F}_2,{\mathbf F}_2]$. Now, the assumption on $\varepsilon(w)$ implies that it can be mapped non-trivially to the canonical central extension of $\Z^2$ by $\Z/p\Z$, which is just the central quotient of ${\mathbf H}_3(\Z)/p\Z$. This happens if any only if $\varepsilon(w) \not \in [{\mathbf F}_2,{\mathbf F}_2]^p [[{\mathbf F}_2,{\mathbf F}_2],{\mathbf F}_2]$. Hence, by our main result above the induced word map $w \colon  \PU(p) \times \PU(p) \to \SU(p)$ is is non-trivial on the fundamental class $x_1\dots x_n \in H^{p^2-1}(\SU(p),\Z/p\Z)$, and hence the word map must be surjective. Since $\varepsilon(w) \not\in {\mathbf F}_2^{(1)}$, any lift to $\SU(p)$ of a preimage of the neutral element solves the equation.
\end{proof}

\begin{proof}[Proof of Theorem \ref{mainthm}]
This is a straightforward consequence of Theorem \ref{sup} and Proposition \ref{fixp}. The claim about finite groups follows from Mal'cev's theorem \cite{malcev}, stating that finitely generated linear groups are residually finite.
\end{proof}

\subsection{Related results and low-dimensional cases}

Let us finish by mentioning a few low-dimensional results which go beyond the second step of the lower central series. So far, we are unable to exploit the mechanisms behind these examples in order to get satisfactory results for all hyperlinear groups. However, we would also like to mention some further directions and possible extensions of the techniques used in this paper

Recall, we denote the commutator by $[x,y]:= xyx^{-1}y^{-1}$.
The iterated commutators $c_n \in {\mathbf F}_n = \langle x_1,\dots,x_n \rangle$ are defined by induction
$c_n = [x_n,c_{n-1}]$ and $c_1 = x_1$.
The first result that goes beyond the second step in the lower central series is the following result by Porter.

\begin{theorem}[Porter, see \cite{MR0169244}]
The map $c_3^{{\rm \SU(2)}} \colon  {\rm SU}(2)^{\times 3} \to {\rm SU}(2)$ is not null-homotopic.
\end{theorem}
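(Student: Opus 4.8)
The plan is to translate the statement into an unstable computation in $\pi_*(S^3)$, using $\SU(2)\cong S^3$. Since $c_3=[x_3,[x_2,x_1]]$ sends any triple in which one coordinate equals $e$ to $e$, the map $c_3\colon (S^3)^{\times 3}\to S^3$ is constant on the fat wedge $T\subset (S^3)^{\times 3}$ and hence factors through the collapse $q\colon (S^3)^{\times 3}\to (S^3)^{\times 3}/T=S^3\wedge S^3\wedge S^3\cong S^9$, say $c_3=\bar c_3\circ q$. As the composite of the characteristic map $D^9\to(S^3)^{\times 3}$ of the top cell with $q$ induces a degree $\pm1$ self-map of $S^9$, one has $c_3\simeq *$ if and only if $\bar c_3\simeq *$; and $\bar c_3\in\pi_9(S^3)$ is the iterated Samelson product $\langle\iota_3,\langle\iota_3,\iota_3\rangle\rangle$ (equivalently, under $S^3=\Omega\,\mathbb{HP}^\infty$, the iterated Whitehead product $[u,[u,u]]$ of the bottom cell class $u\in\pi_4(\mathbb{HP}^\infty)$). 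Unwinding $c_3=[x_3,c_2]$ shows moreover that $\bar c_3$ equals, up to sign, the composite $S^9=S^3\wedge S^6\xrightarrow{\,1_{S^3}\wedge\bar c_2\,}S^3\wedge S^3=S^6\xrightarrow{\,\bar c_2\,}S^3$, that is $\bar c_3=\bar c_2\circ\Sigma^3\bar c_2$, where $\bar c_2=\langle\iota_3,\iota_3\rangle\in\pi_6(S^3)$ is the class of the commutator map of $\SU(2)$. So it suffices to prove $\bar c_2\circ\Sigma^3\bar c_2\neq 0$ in $\pi_9(S^3)$.

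I would then localize at $3$, which loses nothing since $\pi_9(S^3)\cong\Z/3$. Samelson's theorem gives $\bar c_2\neq 0$, and the classical identification of $\langle\iota_3,\iota_3\rangle$ with a generator of $\pi_6(S^3)\cong\Z/12$ -- which follows, e.g., from the fact that the corresponding Whitehead product $[u,u]$ generates $\pi_7(\mathbb{HP}^\infty)\cong\Z/12$, read off from the cell structure $\mathbb{HP}^2=S^4\cup_\nu e^8$ -- shows that $3$-locally $\bar c_2$ is a unit multiple of the generator $\alpha$ of $\pi_6(S^3_{(3)})\cong\Z/3$ from Section \ref{topsu}. The triple suspension $\pi_6(S^3)\to\pi_9(S^6)$ is surjective on $3$-torsion (both groups have $3$-torsion subgroup $\Z/3$, and this is immediate from the EHP sequence of $S^3$, using $[\iota_3,\iota_3]=0$), and $\pi_9(S^6)$ is stable, so $\Sigma^3\bar c_2$ is a unit multiple of $\Sigma^3\alpha$. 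Since the composition pairing $\pi_6(S^3)\times\pi_9(S^6)\to\pi_9(S^3)$ is bilinear modulo $3$ in this range -- the distributivity in the left variable holding because $\Sigma^3\bar c_2$ is a suspension -- we conclude that $\bar c_3=\bar c_2\circ\Sigma^3\bar c_2$ is, $3$-locally, a unit multiple of $\alpha\circ\Sigma^3\alpha\in\pi_9(S^3)_{(3)}$.

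It therefore remains -- and this is the crux -- to show $\alpha\circ\Sigma^3\alpha\neq 0$ in $\pi_9(S^3)$. This cannot come from any formal property of Samelson products: the composite vanishes stably (as $\pi_6^s$ has no $3$-torsion), and in general a nonzero class can have vanishing self-composite, so $c_2$ being essential is by itself not enough. The non-vanishing is a classical computation of Toda, namely that $\pi_9(S^3)\cong\Z/3$ is generated by $\alpha\circ\Sigma^3\alpha$ (equivalently $[u,[u,u]]$ generates $\pi_{10}(\mathbb{HP}^\infty)$). A self-contained proof can be given by a secondary-operation argument in the spirit of Section \ref{topsu}: since $\mathcal P^1\mathcal P^1=0$ in the mod-$3$ Steenrod algebra, there is an associated stable secondary cohomology operation, and one checks by the Cartan--Serre method that it acts non-trivially on the relevant Postnikov truncation of $S^3$ (equivalently, on the $10$-skeleton of $\mathbb{HP}^\infty$), thereby detecting $\alpha\circ\Sigma^3\alpha$. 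Granting this, $\bar c_3\neq 0$ and hence $c_3^{\SU(2)}$ is not null-homotopic. I expect the evaluation of this secondary operation -- equivalently, locating Toda's computation within the present framework -- to be the main obstacle; everything else is formal.
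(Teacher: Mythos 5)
The paper offers no proof of this statement -- it is quoted from Porter's paper -- so I can only assess your argument on its own terms. Your reduction is correct and well organized: $c_3$ is constant on the fat wedge, hence factors through $S^3\wedge S^3\wedge S^3=S^9$, and $c_3\simeq *$ iff the resulting class $\bar c_3=\langle\iota,\langle\iota,\iota\rangle\rangle\in\pi_9(S^3)$ vanishes (the "only if" direction is best justified by the stable splitting of $\Sigma\bigl((S^3)^{\times 3}\bigr)$ together with $\SU(2)\simeq\Omega B\SU(2)$, rather than by the top-cell remark, but this is standard); the identity $\bar c_3=\pm\,\bar c_2\circ\Sigma^3\bar c_2$ and the $3$-localization are fine, as is the observation that $\bar c_2$ generates $\pi_6(S^3)\cong\Z/12\Z$. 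You have correctly isolated the crux: $\alpha\circ\Sigma^3\alpha\neq 0$ in $\pi_9(S^3)\cong\Z/3\Z$, where $\alpha=\alpha_1(3)$.

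The proposed self-contained proof of that crux, however, does not work. The Adem relation at $p=3$ gives $\mathcal P^1\mathcal P^1=2\mathcal P^2\neq 0$, so there is no secondary operation associated with "$\mathcal P^1\mathcal P^1=0$"; and the relation that does hold, $\mathcal P^1\mathcal P^1-2\mathcal P^2=0$, yields an operation defined only on classes killed by $\mathcal P^1$ and $\mathcal P^2$, which the fundamental class of $\mathbb{HP}^\infty$ is not ($\mathcal P^1u=\pm u^2$). More fundamentally, as you yourself observe, $\alpha_1\circ\alpha_1=0$ stably because $\pi_6^s$ has no $3$-torsion, so no stable secondary operation can detect this composite: its non-vanishing on $S^3$ is an intrinsically unstable phenomenon, established by Toda via the EHP sequence and secondary compositions (Toda brackets), not by Steenrod-algebra relations. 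So the argument has a genuine gap exactly where you predicted the main obstacle would be. The gap is closed immediately by citing Toda's computation that $\pi_9(S^3)\cong\Z/3\Z$ is generated by $\alpha_1(3)\circ\alpha_1(6)$ (or by citing Porter directly, as the paper does); with that reference in place of the secondary-operation sketch, your proof is complete.
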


In order to treat $c_n$ for $n \geq 4$, we need to use some more sophisticated results from algebraic topology related to homotopy nilpotence results. This was done by Rao in \cite{MR1217066}, showing also that ${\rm Spin}(n)$ is not homotopy nilpotent for $n \geq 7$.

\begin{theorem}[Rao, \cite{MR1217066}]
The map $c_n^{{\rm PU(2)}} \colon  {\rm PU}(2)^{\times n} \to {\rm PU}(2)$ is not null-homotopic for any $n \in \N$.
\end{theorem}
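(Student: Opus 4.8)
Since $\PU(2)={\rm SO}(3)$ has universal cover $\SU(2)=S^3$, with covering projection $q\colon S^3\to{\rm SO}(3)$, the first step is to reduce the statement to a question about essential maps into $S^3$. The commutator map of $\PU(2)$ lifts to an $\SU(2)$-valued map, the center cancelling as in the remark after Lemma~\ref{comm}, and iterating this lift produces $\tilde c_n\colon\PU(2)^{\times n}\to\SU(2)$ with $q\circ\tilde c_n=c_n^{\PU(2)}$. Being an honest iterated group commutator, $\tilde c_n$ is constantly the identity whenever one of its arguments is the identity, so it factors through the top quotient $\PU(2)^{\wedge n}=(\mathbb{RP}^3)^{\wedge n}$, giving $\widehat c_n\colon(\mathbb{RP}^3)^{\wedge n}\to S^3$. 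Since $\PU(2)^{\times n}$ splits after one suspension and $\PU(2)$ is a loop space, $c_n^{\PU(2)}$ — all of whose smash components except the top one are trivial — is null-homotopic if and only if $q\circ\widehat c_n$ is; and since the antipodal deck transformation of $S^3$ has degree $1$, the map $[(\mathbb{RP}^3)^{\wedge n},S^3]\to[(\mathbb{RP}^3)^{\wedge n},{\rm SO}(3)]$ induced by $q$ is a bijection for $n\geq 2$. So the theorem becomes: $\widehat c_n$ is essential for every $n$.

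The crucial point is that, in contrast with the $\SU(2)$-case, this non-triviality cannot be seen on the top cell of the source. Precomposing $\widehat c_n$ with the natural map $q^{\wedge n}\colon S^{3n}=(S^3)^{\wedge n}\to(\mathbb{RP}^3)^{\wedge n}$ recovers the $n$-fold iterated Samelson product $P_n:=\langle\iota_3,\langle\iota_3,\dots,\langle\iota_3,\iota_3\rangle\dots\rangle\rangle\in\pi_{3n}(S^3)$, and since this bracket is bi-additive, for $n\geq 4$ it is an iterated bracket built on Porter's order-$3$ class $P_3\in\pi_9(S^3)$ and hence annihilated by $3$; as $\pi_{3n}(S^3)$ has no $3$-torsion for $n\geq 4$, we get $P_n=0$ — equivalently, $\SU(2)=S^3$ is itself homotopy nilpotent, of class exactly $3$. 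So $\widehat c_n$ must be detected on a lower-dimensional cell of $(\mathbb{RP}^3)^{\wedge n}$. Since everything in sight is $2$-primary, the plan is to localize at $2$ and use the module structure of $H^*((\mathbb{RP}^3)^{\wedge n},\Z/2\Z)$ over the Steenrod algebra — it is an $n$-fold smash of the indecomposable complex $\mathbb{RP}^3$, whose cells are tied together by $Sq^1$ and by higher secondary operations — to produce a non-vanishing secondary cohomology operation obstructing $\widehat c_n\simeq *$, tracing the non-triviality back to Adams' non-vanishing results for $Sq^{2^{i}}$ (the Hopf-invariant-one and vector-fields package) and checking that the dimension bookkeeping in $3n$ keeps one in the range where those operations are non-zero.

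The hard part is the second paragraph: one must genuinely realize the obstruction to $\widehat c_n\simeq *$ as a non-zero iterated secondary operation and keep uniform control of it as $n\to\infty$. This is the technical core of \cite{MR1217066}, the ``more sophisticated results from algebraic topology related to homotopy nilpotence'' alluded to above. Note that the primary-cohomology method of the present paper is useless here: the mod-$2$ Hopf algebra $H^*({\rm SO}(3),\Z/2\Z)=(\Z/2\Z)[a]/(a^{4})$ is co-commutative, so the commutator map is trivial on mod-$2$ cohomology — and this is exactly why $\PU(2)$, unlike its simply connected cover $\SU(2)$, requires a genuinely different and heavier argument. The reduction in the first paragraph and the $2$-local set-up are, by contrast, routine.
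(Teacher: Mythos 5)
This theorem is quoted from the literature: the paper gives no proof of it, only the citation to \cite{MR1217066}, so the relevant question is simply whether your attempt actually establishes the statement. Your first paragraph is a correct and standard reduction. The iterated commutator does lift to $\tilde c_n\colon \PU(2)^{\times n}\to\SU(2)$, all smash components below the top one vanish because the map is constant on the fat wedge, and since $(\mathbb{R}P^3)^{\wedge n}$ is simply connected for $n\geq 2$ and the antipodal map of $S^3$ has degree $+1$, the problem is indeed equivalent to showing that $\widehat c_n\colon(\mathbb{R}P^3)^{\wedge n}\to S^3$ is essential. You are also right that the top-cell obstruction (the iterated Samelson product in $\pi_{3n}(S^3)$) must eventually vanish --- $S^3$ is torsion-free and hence homotopy nilpotent --- so a finer obstruction supported on lower cells of the smash is genuinely needed. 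All of this is sound, and it is a useful clarification that the primary mod-$2$ cohomology argument of the present paper cannot work here because $H^*({\rm SO}(3),\Z/2\Z)$ is cocommutative.

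The gap is that your second paragraph does not supply that finer obstruction; it announces that one exists and defers it wholesale to \cite{MR1217066}, together with a speculative description (``a non-vanishing secondary cohomology operation \dots\ tracing back to Adams' non-vanishing results for $Sq^{2^i}$'') that is neither carried out nor, so far as I can tell, the mechanism Rao actually uses: his argument belongs to the homotopy-nilpotence circle of ideas of Hopkins and is driven by Morava $K$-theory computations, which is precisely what gives uniform control over all $n$ at once (the general principle being that a compact Lie group is $p$-locally homotopy nilpotent if and only if its integral homology has no $p$-torsion, and $H_1({\rm SO}(3),\Z)=\Z/2\Z$). Several auxiliary assertions in that paragraph are also stated without justification and are at least doubtful --- for instance that $\pi_{3n}(S^3)$ has no $3$-torsion for every $n\geq 4$ (the $3$-primary $\alpha$-family reappears in degrees $3+(4i-1)$, which meets $3\N$), or that the homotopy nilpotency class of $S^3$ is exactly $3$ --- though none of these is load-bearing for your reduction. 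In short: the set-up is fine, but the essential content of the theorem is exactly the part you have not proved, so the proposal is an accurate framing of the problem rather than a proof.
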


As before, we obtain results concerning solvability of equations.

\begin{corollary}
Let $G$ be any subgroup of ${\rm SU(2)}$ and let $w \in G \ast {\mathbf F}_n$ be such that $\varepsilon(w)=c_n$ for some $n \geq 2$. Then, $w(x_1,\dots,x_n)=1$ can be solved in some group containing $G$.
\end{corollary}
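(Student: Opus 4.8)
The plan is to deduce this corollary from Rao's theorem by the same mechanism used throughout the paper, namely that a word map homotopic to a non-null-homotopic map into a sphere must be surjective. First I would recall that $\mathrm{SU}(2)$ is the $3$-sphere, that $\mathrm{PU}(2) = \mathrm{SU}(2)/\{\pm 1\} \cong \mathrm{SO}(3)$, and that the iterated commutator $c_n \in \mathbf{F}_n$ lies in the $n$-th term of the lower central series, so the associated word map $c_n^{\mathrm{SU}(2)} \colon \mathrm{SU}(2)^{\times n} \to \mathrm{SU}(2)$ actually takes values in the commutator subgroup and hence factors through a map $\mathrm{PU}(2)^{\times n} \to \mathrm{SU}(2)$; one checks this descent is compatible with the covering $\mathrm{SU}(2) \to \mathrm{PU}(2)$ so that Rao's statement about $c_n^{\mathrm{PU}(2)}$ being not null-homotopic gives the non-null-homotopy of the lift $c_n^{\mathrm{PU}(2)} \colon \mathrm{PU}(2)^{\times n} \to \mathrm{SU}(2)$ as well (a non-trivial class cannot become trivial after composing with the degree-$2$ covering in low degrees, but more simply one argues at the level of the lift directly since $\mathrm{SU}(2)$ is the universal cover).

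Next I would set up the word-map comparison. Given $w \in G \ast \mathbf{F}_n$ with $\varepsilon(w) = c_n$ and $G \leq \mathrm{SU}(2)$, the evaluation map $w \colon \mathrm{SU}(2)^{\times n} \to \mathrm{SU}(2)$ is homotopic to the word map $\varepsilon(w) = c_n^{\mathrm{SU}(2)}$, because $\mathrm{SU}(2)$ is connected and path-connected so the coefficients from $G$ can be deformed to the identity along paths, killing them one at a time; this is precisely the Gerstenhaber--Rothaus homotopy argument recalled in the introduction. Since $c_n \in [\mathbf{F}_n,\mathbf{F}_n]$ (indeed much deeper), this homotopy can be performed through maps that factor through $\mathrm{PU}(2)^{\times n}$, so $w$ is homotopic as a map $\mathrm{PU}(2)^{\times n} \to \mathrm{SU}(2)$ to $c_n^{\mathrm{PU}(2)}$. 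By Rao's theorem the latter is not null-homotopic, hence $w$ is not null-homotopic; since the source is a closed manifold and the target $\mathrm{SU}(2) = S^3$ is a sphere, a non-null-homotopic map must be surjective (a non-surjective map into $S^3$ factors through a contractible $S^3 \setminus \{\mathrm{pt}\}$).

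Finally I would extract the solvability statement. Surjectivity of $w \colon \mathrm{SU}(2)^{\times n} \to \mathrm{SU}(2)$ means there exist $g_1,\dots,g_n \in \mathrm{SU}(2)$ with $w(g_1,\dots,g_n) = 1$, so the equation is solved \emph{in} $\mathrm{SU}(2)$. Since $G \leq \mathrm{SU}(2)$, the equation is in particular solved over $G$ inside the overgroup $\mathrm{SU}(2)$ (if one wants an overgroup of $G$ that is more canonical one can take $\mathrm{SU}(2)$ itself, or the subgroup it generates together with $G$; in any case $\mathrm{SU}(2) \supseteq G$ works). This gives the conclusion.

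The main obstacle I expect is the bookkeeping in the first paragraph: carefully justifying that Rao's non-null-homotopy for $c_n^{\mathrm{PU}(2)} \colon \mathrm{PU}(2)^{\times n} \to \mathrm{PU}(2)$ transfers to the lift $\mathrm{PU}(2)^{\times n} \to \mathrm{SU}(2)$, and that the Gerstenhaber--Rothaus deformation of coefficients really can be carried out through maps factoring through $\mathrm{PU}(2)^{\times n}$ rather than only through $\mathrm{SU}(2)^{\times n}$. Both are routine given that $c_n$ lies in the commutator subgroup and that $\mathrm{SU}(2) \to \mathrm{PU}(2)$ is the universal cover, but they are the points where care is needed; everything else is a direct application of the degree/surjectivity principle already used for Samelson's and Porter's theorems above.
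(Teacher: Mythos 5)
Your proposal is correct and follows essentially the same route as the paper, which likewise argues that a word map with content $c_n$ is homotopic to $c_n$ itself, invokes Porter/Rao for non-null-homotopy, and uses that a non-surjective map into ${\rm SU}(2)=S^3$ is null-homotopic. The paper compresses all of this into ``using the same arguments as before''; your extra care about descending to ${\rm PU}(2)^{\times n}$ and transferring Rao's statement to the lift into ${\rm SU}(2)$ is exactly the bookkeeping that phrase hides, and your argument for it (a null-homotopy of the lift would push forward to one of $c_n^{{\rm PU}(2)}$) is the right one.
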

\begin{proof}
Since ${\rm SU}(2)=S^3$, every non-surjective map is null-homotopic. Thus, using the same arguments as before, we can conclude that for every $w \in {\rm SU}(2) \ast {\mathbf F}_n$ with content $c_n$, the induced word map $w \colon  {\rm \SU(2)}^{\times n} \to {\rm SU}(2)$ is surjective. 
\end{proof}

Finally, we want to mention some questions that appear naturally at this interface between homotopy theory and the study of word maps.
Given a topological group, it seems natural to study the group of words modulo those which are null-homotopic. Let $G$ be a compact Lie group, set $$N_{n,G} := \left\{w \in {\mathbf F}_n \mid w \colon G^n \to G \mbox{ is
homotopically trivial} \right\}$$ and define ${\mathcal H}_{n,G} := {\mathbf F}_n/N_{n,G}$.

\begin{question}
Can we compute ${\mathcal H}_{2,{\rm SU}(n)}$ ? 
 \end{question}

See \cite{MR0106465, MR1233412} for partial information about  $\mathcal H_{n,G}$ in particular cases.
 In this direction, the following result is implied by results on p.464 in \cite{MR516508}.
\begin{theorem}[Whitehead, see \cite{MR516508}]
Let $G$ be a connected and simply connected compact Lie group. Then, ${\mathcal H}_G$ is $k$-step nilpotent for some $k \leq 2 \cdot \dim(G)$.
\end{theorem}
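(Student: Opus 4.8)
The plan is to prove the equivalent assertion that for every $n\in\N$ the group $\mathcal H_{n,G}=\F_n/N_{n,G}$ is nilpotent of class at most $2\dim G$, i.e.\ that $\gamma_{2d+1}(\F_n)\subseteq N_{n,G}$, where $d:=\dim G$ and $\gamma_\bullet$ denotes the lower central series. First I would record that $N_{n,G}$ is a normal subgroup of $\F_n$, so that $\mathcal H_{n,G}$ is genuinely a group: if $w\colon G^{n}\to G$ is homotopic to the constant map at the neutral element, then for $u\in\F_n$ the map $g\mapsto u(g)w(g)u(g)^{-1}$ is homotopic to $g\mapsto u(g)u(g)^{-1}=e$, and if $w_1,w_2$ are both null-homotopic then $w_1w_2=m_G\circ(w_1,w_2)$ is too. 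Since $\gamma_{k+1}(\F_n)$ is generated as a subgroup by the left-normed commutators $[y_1,\dots,y_{k+1}]^{\pm1}$ with $y_1,\dots,y_{k+1}\in\F_n$ (it is the verbal subgroup of the word $[x_1,\dots,x_{k+1}]$), and the word map of such a commutator factors as
$$G^{n}\xrightarrow{(y_1,\dots,y_{k+1})}G^{\times(k+1)}\xrightarrow{\ c_{k+1}\ }G,$$
where $c_{k+1}$ is the left-normed iterated commutator map in $k+1$ distinct variables, it suffices to show that $c_{2d+1}\colon G^{\times(2d+1)}\to G$ is null-homotopic.

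Next I would exploit that $c_m$, by induction on $m$ via $c_m=[c_{m-1},x_m]$, takes the value $e$ whenever one of its $m$ arguments equals $e$; hence $c_m$ restricts to a null-homotopic map on the fat wedge $T\subset G^{\times m}$ of points with at least one coordinate equal to $e$. As $T\hookrightarrow G^{\times m}$ is a cofibration with cofibre the smash product $G^{\wedge m}$, the map $c_m$ is homotopic to a map vanishing on $T$ and therefore factors up to homotopy as $c_m\simeq\bar c_m\circ q$ with $q\colon G^{\times m}\to G^{\wedge m}$ the collapse map and $\bar c_m\colon G^{\wedge m}\to G$. Because $G$ is simply connected, the smash of $m$ copies of $G$ is $(2m-1)$-connected, and $G$ has the homotopy type of a finite $CW$-complex of dimension $d$ with cells only in degrees $2,\dots,d$. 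Thus the problem is reduced to showing that the reduced iterated commutator $\bar c_{2d+1}$, a map from a $(4d+1)$-connected finite complex to the $d$-dimensional space $G$, is null-homotopic.

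This last step is the crux, and here one must use that $\bar c_{2d+1}$ is not an arbitrary map but an iterated \emph{reduced commutator}, i.e.\ (under the adjunction $G\simeq\Omega BG$) an iterated Samelson product; the relevant input is the classical fact recorded on p.\,464 of \cite{MR516508} that in a finite-dimensional grouplike space all sufficiently iterated Samelson products vanish, the required length being controlled by the dimension — which is exactly why a dimension bound enters. One already sees the mechanism on cohomology: for any coefficient ring $R$ the Hopf algebra $\widetilde H^{*}(G;R)$ is concentrated in degrees between $2$ and $d$, so its iterated reduced coproduct $\bar\Delta^{(m)}$ annihilates everything once the number of tensor factors exceeds $\tfrac12\dim G$, and the effect of an iterated commutator on cohomology is governed by such iterated reduced coproducts modulo decomposables (the kind of computation already used in Theorem \ref{baumbrowder} and Lemma \ref{comm}); hence $\bar c_m$ is zero on reduced cohomology with all coefficients once $m$ is of order $\dim G$. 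Upgrading "zero on cohomology with all coefficients" to "null-homotopic" is false for maps in general and is where the H-structure of $G$ together with a careful obstruction-theoretic (Postnikov-tower) bookkeeping against the connectivity of $G^{\wedge m}$ is needed; this is the delicate part, and the lossy nature of the bookkeeping is what yields the deliberately non-optimal bound $2\dim G$. I expect this upgrade to be the main obstacle. Once $\bar c_{2d+1}$ is null-homotopic, composing with $q$ and with the tuple $(y_1,\dots,y_{2d+1})$ gives $\gamma_{2d+1}(\F_n)\subseteq N_{n,G}$, and hence the theorem.
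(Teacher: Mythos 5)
There is a genuine gap here, and your route cannot be repaired. Your reduction converts the theorem into the claim that the iterated commutator map $c_{2d+1}\colon G^{\times(2d+1)}\to G$ is itself null-homotopic, i.e.\ that $G$ is \emph{homotopy nilpotent} of class at most $2\dim(G)$. That statement is false for general connected, simply connected compact Lie groups: by Rao's theorem \cite{MR1217066}, quoted elsewhere in this paper, ${\rm Spin}(n)$ is not homotopy nilpotent for $n\geq 7$, so for those groups \emph{no} iterated commutator map $c_m\colon G^{\times m}\to G$ is null-homotopic. Consequently the step you flag as ``the main obstacle'' --- upgrading ``zero on reduced cohomology'' to ``null-homotopic'' for $\bar c_{2d+1}\colon G^{\wedge(2d+1)}\to G$ --- is not merely delicate but impossible in general; high connectivity of the source against low dimension of the target gives no obstruction-theoretic leverage (compare stable homotopy classes $S^N\to S^3$), vanishing of iterated reduced coproducts only yields primitivity statements rather than null-homotopy, and the $H$-structure does not rescue the argument. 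The deeper problem is that your very first reduction is lossy: to show $\gamma_{2d+1}(\F_n)\subseteq N_{n,G}$ one only needs the \emph{composites} $c_{2d+1}\circ(y_1,\dots,y_{2d+1})\colon G^{\times n}\to G$ to be null, and these factor through a space of bounded dimension; replacing them by $c_{2d+1}$ itself discards exactly the feature that makes the theorem true.

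The intended proof is much softer and concerns a fixed number of variables (here ${\mathcal H}_G$ means ${\mathcal H}_{2,G}$), not all $n$ at once. One quotes Whitehead's theorem \cite{MR516508} that for a grouplike space $G$ and a finite-dimensional complex $X$ the group $[X,G]$ is nilpotent of class at most $\dim(X)$ --- a bound governed by the source $X$, not by any homotopy nilpotency of $G$ --- applies it to $X=G\times G$, which is where the factor $2$ in $2\cdot\dim(G)$ comes from, and observes that ${\mathcal H}_{2,G}=\F_2/N_{2,G}$ embeds into $[G\times G,G]$ as the subgroup generated by the two coordinate projections, hence inherits the nilpotency bound. Your preliminary observations (normality of $N_{n,G}$, generation of $\gamma_{k+1}(\F_n)$ by left-normed commutators, factorization through the fat wedge and the smash) are correct, but they feed into a target statement that is strictly stronger than the theorem and false.
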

\begin{proof} We denote the degree of nilpotency of a group $\Gamma$ by ${\rm nil}(\Gamma)$.
Whitehead showed that the homotopy set $[X,G]$ is a group and $${\rm nil}([X,G]) \leq \dim(X).$$ For $X=G\times G$, we obtain ${\rm nil}([G \times G,G]) \leq \dim(G \times G) = 2 \cdot \dim(G)$. Now, the subgroup generated by the coordinate projections is precisely ${\mathcal H}_G$. This proves the claim.
\end{proof}

Let $G$ be a topological group, e.g., a compact Lie group. We call $w \in {\mathbf F}_n$ {\it homotopically surjective} with respect to $G$ if every map in the homotopy class of $w \colon  G^{\times n} \to G$ is surjective.

\begin{question}
Let $w \in \F_n \setminus \{1\}$. Is $w \colon \PU(n) \times \PU(n) \to \PU(n)$ homotopically surjective for large $n$?
\end{question}

\section*{Acknowledgments}

The work of the first author was supported by the Russian Foundation for Basic Research, project no. 15-01-05823. He thanks TU Dresden for its hospitality during a visit in January 2015. He thanks this institution for its hospitality. 

The second author wants to thank Tilman Bauer for some interesting comments. 
Part of this paper was written, when he visited Institute Henri Poincar\'e for the trimester about random walks and asymptotic geometry of groups in spring 2014.  This research was supported by ERC Starting Grant No.\ 277728 and the MPI-MIS in Leipzig.

We thank the unknown referee for many useful comments that improved the exposition.

\begin{bibdiv}
\begin{biblist}

\bib{MR2975932}{article}{
   author={Baranov, Dmitrii V.},
   author={Klyachko, Anton A.},
   title={Efficient adjunction of square roots to groups},
   language={Russian, with Russian summary},
   journal={Sibirsk. Mat. Zh.},
   volume={53},
   date={2012},
   number={2},
   pages={250--257},
   translation={
      journal={Sib. Math. J.},
      volume={53},
      date={2012},
      number={2},
      pages={201--206},
      issn={0037-4466},
   },
}

\bib{MR0189063}{article}{
   author={Baum, Paul F.},
   author={Browder, William},
   title={The cohomology of quotients of classical groups},
   journal={Topology},
   volume={3},
   date={1965},
   pages={305--336},
}

\bib{MR0051508}{article}{
   author={Borel, Armand},
   title={Sur la cohomologie des espaces fibr\'es principaux et des espaces
   homog\`enes de groupes de Lie compacts},
   language={French},
   journal={Ann. of Math. (2)},
   volume={57},
   date={1953},
   pages={115--207},
}

\bib{MR0064056}{article}{
   author={Borel, Armand},
   title={Sur l'homologie et la cohomologie des groupes de Lie compacts
   connexes},
   language={French},
   journal={Amer. J. Math.},
   volume={76},
   date={1954},
   pages={273--342},
}

\bib{MR0102803}{article}{
   author={Bott, Raoul},
   title={The space of loops on a Lie group},
   journal={Michigan Math. J.},
   volume={5},
   date={1958},
   pages={35--61},
}

\bib{MR0123330}{article}{
   author={Bott, Raoul},
   title={A note on the Samelson product in the classical groups},
   journal={Comment. Math. Helv.},
   volume={34},
   date={1960},
   pages={249--256},
}

\bib{MR0365573}{book}{
   author={Bousfield,  Aldridge K.},
   author={Kan, Daniel M.},
   title={Homotopy limits, completions and localizations},
   series={Lecture Notes in Mathematics, Vol. 304},
   publisher={Springer-Verlag},
   place={Berlin},
   date={1972},
   pages={v+348},
}

\bib{MR2785779}{article}{
   author={Edjvet, Martin},
   author={Juh{\'a}sz, Ayre},
   title={Non-singular equations over groups I},
   journal={Algebra Colloq.},
   volume={18},
   date={2011},
   number={2},
   pages={221--240},
}

\bib{MR2642015}{article}{
   author={Edjvet, Martin},
   author={Juh{\'a}sz, Ayre},
   title={Nonsingular equations over groups II},
   journal={Comm. Algebra},
   volume={38},
   date={2010},
   number={5},
   pages={1640--1657},
}

\bib{MR1002920}{article}{
   author={Edjvet, Martin},
   author={Howie, James},
   title={The solution of length four equations over groups},
   journal={Trans. Amer. Math. Soc.},
   volume={326},
   date={1991},
   number={1},
   pages={345--369},
}

\bib{MR3043070}{article}{
   author={Elkasapy, Abdelrhman},
   author={Thom, Andreas},
   title={About G\^{o}to's method showing the surjectivity of word maps},
   journal={arXiv:1207.5596}
   status={to appear in Indiana Univ. Math. J.}
}

\bib{MR2324624}{article}{
   author={Evangelidou, Anastasia},
   title={The solution of length five equations over groups},
   journal={Comm. Algebra},
   volume={35},
   date={2007},
   number={6},
   pages={1914--1948},
}

\bib{MR919828}{article}{
   author={Gersten, Steve M.},
   title={Reducible diagrams and equations over groups},
   conference={
      title={Essays in group theory},
   },
   book={
      series={Math. Sci. Res. Inst. Publ.},
      volume={8},
      publisher={Springer},
      place={New York},
   },
   date={1987},
   pages={15--73},
}

\bib{MR0166296}{article}{
   author={Gerstenhaber, Murray},
   author={Rothaus, Oscar S.},
   title={The solution of sets of equations in groups},
   journal={Proc. Nat. Acad. Sci. U.S.A.},
   volume={48},
   date={1962},
   pages={1531--1533},
}

\bib{gromov}{article}{
   author={Gromov, Misha},
   title={Endomorphisms of symbolic algebraic varieties},
   journal={J. Eur. Math. Soc. (JEMS)},
   volume={1},
   date={1999},
   number={2},
   pages={109--197},
}

\bib{MR2243721}{article}{
   author={Hamanaka, Hiroaki},
   author={Kishimoto, Daisuke},
   author={Kono, Akira},
   title={Self homotopy groups with large nilpotency classes},
   journal={Topology Appl.},
   volume={153},
   date={2006},
   number={14},
   pages={2425--2429},
}

\bib{MR1867354}{book}{
   author={Hatcher, Allen},
   title={Algebraic topology},
   publisher={Cambridge University Press, Cambridge},
   date={2002},
   pages={xii+544},
}

\bib{higman}{article}{
   author={Higman, Graham},
   title={A finitely generated infinite simple group},
   journal={J. London Math. Soc.},
   volume={26},
   date={1951},
   pages={61--64},
   issn={0024-6107},
}

\bib{hopf}{article}{
   author={Hopf, Heinz},
   title={\"Uber den Rang geschlossener Liescher Gruppen},
   journal={Comment. Math. Helv.},
   volume={13},
   date={1940},
   pages={119--143},
}

\bib{MR614523}{article}{
   author={Howie, James},
   title={On pairs of $2$-complexes and systems of equations over groups},
   journal={J. Reine Angew. Math.},
   volume={324},
   date={1981},
   pages={165--174},
}

\bib{MR695646}{article}{
   author={Howie, James},
   title={The solution of length three equations over groups},
   journal={Proc. Edinburgh Math. Soc. (2)},
   volume={26},
   date={1983},
   number={1},
   pages={89--96},
}

\bib{MR1772011}{article}{
   author={Ivanov, Sergey V.},
   author={Klyachko, Anton A.},
   title={Solving equations of length at most six over torsion-free groups},
   journal={J. Group Theory},
   volume={3},
   date={2000},
   number={3},
   pages={329--337},
}

\bib{MR0106465}{article}{
   author={James, Ioan},
   author={Thomas, Emery},
   title={Which Lie groups are homotopy-abelian?},
   journal={Proc. Nat. Acad. Sci. U.S.A.},
   volume={45},
   date={1959},
   pages={737--740},
}

\bib{MR2100369}{article}{
   author={Juh{\'a}sz, Arye},
   title={On the solvability of equations over groups},
   journal={Comm. Algebra},
   volume={32},
   date={2004},
   number={4},
   pages={1487--1496},
}

\bib{MR2544124}{article}{
   author={Kishimoto, Daisuke},
   author={Kono, Akira},
   title={On a conjecture of \=Oshima},
   journal={Topology Appl.},
   volume={156},
   date={2009},
   number={13},
   pages={2189--2192},
}

\bib{MR1218513}{article}{
   author={Klyachko, Anton A.},
   title={A funny property of sphere and equations over groups},
   journal={Comm. Algebra},
   volume={21},
   date={1993},
   number={7},
   pages={2555--2575},
}

\bib{MR1696317}{article}{
   author={Klyachko, Anton A.},
   title={Equations over groups, quasivarieties, and a residual property of
   a free group},
   journal={J. Group Theory},
   volume={2},
   date={1999},
   number={3},
   pages={319--327},
}

\bib{MR2251364}{article}{
   author={Klyachko, Anton A.},
   title={How to generalize known results on equations over groups},
   language={Russian, with Russian summary},
   journal={Mat. Zametki},
   volume={79},
   date={2006},
   number={3},
   pages={409--419},
   translation={
      journal={Math. Notes},
      volume={79},
      date={2006},
      number={3-4},
      pages={377--386},
      issn={0001-4346},
   },
}

\bib{MR2179667}{article}{
   author={Klyachko, Anton A.},
   author={Trofimov, Anton V.},
   title={The number of non-solutions of an equation in a group},
   journal={J. Group Theory},
   volume={8},
   date={2005},
   number={6},
   pages={747--754},
}

\bib{MR0142643}{article}{
   author={Levin, Frank},
   title={Solutions of equations over groups},
   journal={Bull. Amer. Math. Soc.},
   volume={68},
   date={1962},
   pages={603--604},
}

\bib{malcev}{article}{
   author={Malcev, Anatoly},
   title={On isomorphic matrix representations of infinite groups},
   journal={Rec. Math. [Mat. Sbornik] N.S.},
   volume={8 (50)},
   date={1940},
   pages={405--422},
}

\bib{MR2884233}{book}{
   author={May, Peter},
   author={Ponto, Kate},
   title={More concise algebraic topology},
   series={Chicago Lectures in Mathematics},
   note={Localization, completion, and model categories},
   publisher={University of Chicago Press, Chicago, IL},
   date={2012},
   pages={xxviii+514},
}

\bib{MR0295347}{article}{
   author={Mimura, Mamoru},
   author={Nishida, Goro},
   author={Toda, Hirosi},
   title={Localization of ${\rm CW}$-complexes and its applications},
   journal={J. Math. Soc. Japan},
   volume={23},
   date={1971},
   pages={593--624},
}

\bib{MR0008808}{article}{
   author={Neumann, Bernhard H.},
   title={Adjunction of elements to groups},
   journal={J. London Math. Soc.},
   volume={18},
   date={1943},
   pages={4--11},
}

\bib{MR2995181}{article}{
   author={Nikolov, Nikolay},
   author={Segal, Dan},
   title={Generators and commutators in finite groups; abstract quotients of
   compact groups},
   journal={Invent. Math.},
   volume={190},
   date={2012},
   number={3},
   pages={513--602},
}

\bib{MR2460675}{article}{
   author={Pestov, Vladimir G.},
   title={Hyperlinear and sofic groups: a brief guide},
   journal={Bull. Symbolic Logic},
   volume={14},
   date={2008},
   number={4},
   pages={449--480},
}

\bib{MR0169244}{article}{
   author={Porter, Gerald J.},
   title={Homotopical nilpotence of $S^{3}$},
   journal={Proc. Amer. Math. Soc.},
   volume={15},
   date={1964},
   pages={681--682},
}

\bib{MR1217066}{article}{
   author={Rao, Vidhyanath K.},
   title={${\rm Spin}(n)$ is not homotopy nilpotent for $n\geq 7$},
   journal={Topology},
   volume={32},
   date={1993},
   number={2},
   pages={239--249},
}

\bib{MR3043434}{article}{
   author={Roman'kov, Vitali{\u\i}},
   title={Equations over groups},
   journal={Groups Complex. Cryptol.},
   volume={4},
   date={2012},
   number={2},
   pages={191--239},
}

\bib{rothaus}{article}{
   author={Rothaus, Oskar S.},
   title={On the non-triviality of some group extensions given by generators
   and relations},
   journal={Ann. of Math. (2)},
   volume={106},
   date={1977},
   number={3},
   pages={599--612},
}

\bib{MR0065157}{article}{
   author={Samelson, Hans},
   title={Groups and spaces of loops},
   journal={Comment. Math. Helv.},
   volume={28},
   date={1954},
   pages={278--287},
}

\bib{MR0059548}{article}{
   author={Serre, Jean-Pierre},
   title={Groupes d'homotopie et classes de groupes ab\'eliens},
   language={French},
   journal={Ann. of Math. (2)},
   volume={58},
   date={1953},
   pages={258--294},
}

\bib{trees}{book}{
   author={Serre, Jean-Pierre},
   title={Trees},
   series={Springer Monographs in Mathematics},
   note={Translated from the French original by John Stillwell;
   Corrected 2nd printing of the 1980 English translation},
   publisher={Springer-Verlag, Berlin},
   date={2003},
   pages={x+142},
}

\bib{stolzthom}{article}{
   author={Stolz, Abel},
   author={Thom, Andreas},
   title={On the lattice of normal subgroups in ultraproducts of compact simple groups},
   journal={Proc. London Math. Soc.},
   volume={108},
   date={2014},
   pages={73--102},
}

\bib{MR2079378}{article}{
   author={Stolz, Stephan},
   author={Teichner, Peter},
   title={What is an elliptic object?},
   conference={
      title={Topology, geometry and quantum field theory},
   },
   book={
      series={London Math. Soc. Lecture Note Ser.},
      volume={308},
      publisher={Cambridge Univ. Press},
      place={Cambridge},
   },
   date={2004},
   pages={247--343},
}

\bib{MR3043070}{article}{
   author={Thom, Andreas},
   title={Convergent sequences in discrete groups},
   journal={Canad. Math. Bull.},
   volume={56},
   date={2013},
   number={2},
   pages={424--433},
}

\bib{weiss}{article}{
   author={Weiss, Benjamin},
   title={Sofic groups and dynamical systems},
   note={Ergodic theory and harmonic analysis (Mumbai, 1999)},
   journal={Sankhy\=a Ser. A},
   volume={62},
   date={2000},
   number={3},
   pages={350--359},
}

\bib{MR516508}{book}{
   author={Whitehead, George W.},
   title={Elements of homotopy theory},
   series={Graduate Texts in Mathematics},
   volume={61},
   publisher={Springer-Verlag},
   place={New York},
   date={1978},
   pages={xxi+744},
}

\bib{MR1233412}{article}{
   author={Yagita, Nobuaki},
   title={Homotopy nilpotency for simply connected Lie groups},
   journal={Bull. London Math. Soc.},
   volume={25},
   date={1993},
   number={5},
   pages={481--486},
}

\end{biblist}
\end{bibdiv} 

\end{document}